\newtheorem{theorem}{Theorem}[section]
\newtheorem{lemma}[theorem]{Lemma}
\newtheorem{proposition}[theorem]{Proposition}
\newtheorem{definition}[theorem]{Definition}
{
      \theoremstyle{plain}
      
  }
\def\enddisplaymath{\]\@ignoretrue}
\title{Phase Transitions of the Multifractal Spectrum}
\author{Jason Tomas Dungca}
\date{\today}
\begin{document}

\begin{abstract}
We consider the multifractal analysis of the pointwise dimension for Gibbs measures on countable Markov shifts.  Our paper analyses the set of non-analytic points or phase transitions of the multifractal spectrum.  By Sarig's thermodynamic formalism for countable Markov shifts and Iommi's expression of the multifractal spectrum, we apply analyticity arguments on the pressure function for the countable shift.  Finally, we apply our results about the phase transitions of the multifractal spectrum to the Gauss map.
\end{abstract}

\maketitle
\section{Introduction}
Multifractal analysis is the study of the concentration of a measure on level sets.  In particular, we use measures for a hyperbolic map to study the multifractal spectrum.   Multifractal analysts historically modeled an expanding map with a finite state shift.  Their main result is the expression of the multifractal spectrum, which is concave and analytic.  However, the multifractal spectrum is not analytic everywhere in the case of countable state Markov shifts.  Iommi \cite{iommi2005multifractal} proves an expression for the multifractal spectrum in the setting of a countable Markov shift.  Using Iommi's result, our paper proves that the multifractal spectrum has up to infinitely many phase transitions or non-analytic points. 
\par Before outlining our results, we give some background and definitions in the area of multifractal analysis.  Consider the countable state shift $\Sigma$ satisfying topological mixing and the Big Images Property.  We take the measure $\mu$ on our countable shift to be a Gibbs state and $\alpha \in \mathbb{R}.$  The sequence $x$ is said to have pointwise dimension $\alpha$ if     
\[
d_{\mu}(x):=\lim\limits_{m \rightarrow \infty}\frac{\log\mu([x_{1},...,x_{m}])}{\log|[x_{1},...,x_{m}]|}=\alpha.\]
Let the set of sequences $x$ in $\Sigma$ such that  $d_{\mu}(x)$ does not exist be denoted as $X'$ and let the set of sequences with pointwise dimension $\alpha$ be denoted as $X_{\alpha}^{s}.$  Hence, we can decompose $\Sigma$ into a disjoint union called the multifractal decomposition based on the pointwise dimension of each $x$ in our countable shift.  We will consider a function that gives the Hausdorff dimension of sets that have a pointwise dimension.  The multifractal spectrum with respect to $\mu$ is the function \[f_{\mu}(\alpha)=\dim_{H}(X_{\alpha}^{s}).\]
We remark that the multifractal spectrum is similarly defined for finite state Markov shifts. 
 \par We first discuss historical results about the multifractal spectrum in the setting of finite state Markov shifts.  Rand \cite{rand1989singularity} considers a cookie-cutter, which is a uniformly hyperbolic map.  He uses thermodynamic formalism on the finite state Markov shift to prove that the multifractal spectrum is everywhere analytic.  Next, we discuss the work of Cawley and Mauldin \cite{cawley1992multifractal}.  They consider a fractal constructed by taking an iterated function system on a self-similar set (which yields a self-similar measure).  Falconer \cite{falconer2004fractal} gives more details on the construction of such a fractal.  Cawley and Mauldin essentially model an iterated function system, based on countably many contractions, with a finite state Markov shift.  Their methodology to prove that the multifractal spectrum is everywhere analytic involves geometric arguments. \par
Pesin and Weiss \cite{pesin1997multifractal} consider a uniformly expanding map and prove an expression for the multifractal spectrum.  They use a combination of thermodynamic formalism and a covering argument to prove that the multifractal spectrum is analytic everywhere.  Analysing the multifractal spectrum involves using thermodynamic formalism differently in the case of measures on the countable full shift $\Sigma.$  Iommi \cite{iommi2005multifractal} proves a general formula for the multifractal spectrum with respect to a measure on the countable shift and analyses when the multifractal spectrum is non-analytic.  Hanus, Mauldin, and Urba\'nski also consider the multifractal spectrum in the setting of the countable conformal iterated function system modeled by a countable Markov shift.  Complementary to Iommi's result about the multifractal spectrum's non-analyticity, their paper \cite{hanus2002thermodynamic} gives additional conditions to prove that the multifractal spectrum is analytic.  
\par    
Sarig has developed the thermodynamic formalism on countable Markov shifts $\Sigma.$  His work \cite{sarig2003existence} has established criteria for the existence of Gibbs and equilibrium states for potentials.  This is significant because the existence of equilibrium states, which are important for topological pressure, is not guaranteed for potentials on countable Markov shifts.  These measures are critical in our paper to prove results about the non-analyticity points of $f_{\mu}(\alpha)$ in the setting of a countable Markov shift.  For a thorough discussion of Sarig's work, please refer to Sarig's survey \cite{sarig2015thermodynamic}. \par
We remark that Iommi and Jordan's work analyses the phase transitions of the pressure function and the multifractal spectrum in a similar setting to our paper.  Their paper \cite{iommi2015multifractal}, assumes that the two potentials $\phi$ and $\psi$ are bounded and $\lim\limits_{x \rightarrow 0}\frac{\psi(x)}{\log|T'(x)|}=\infty$ (in contrast, we assume that the limit is equal to $1$).  Their result is that the multifractal spectrum has $0$ to $2$ phase transitions.  In the paper \cite{iommi2013phase}, they take $g$ to be a continuous function defined on the range of the suspension flow.  Then, they prove that the map $t \mapsto \mathcal{P}(tg)$ has $0$ to $1$ phase transition when the roof function dominates the floor function.  Using these results, Iommi and Jordan prove that the multifractal spectrum has $0$ to $2$ phase transitions in their paper \cite{iommi2013phase}. \par  
Their paper \cite{iommi2010multifractal}, considers level sets of $[0,1]$ generated by Birkhoff averages and expanding interval maps that have countably many branches.  This paper uses results from \cite{iommi2013phase} and \cite{iommi2015multifractal}.  Their results include a variational characterisation of the multifractal spectrum and the existence of $0$ to $2$ phase transitions for $f_{\mu}(\alpha)$ when $\alpha_{\lim},$ which is a ratio involving the potentials $\phi$ and $\psi,$ equals $0.$  In contrast, our paper assumes that $0 < \alpha_{\lim} \le \infty$ or does not exist.  Their paper also proves results on the multifractal analysis of suspension flows.  Let $T$ be an expanding interval map and $g$ be a continuous function defined on the range of the suspension flow.  Iommi and Jordan prove that the Birkhoff spectrum with respect to $g$ has two phase transitions if the roof function dominates the geometric potential $\log|T'|.$  \par

We briefly discuss different examples of the multifractal spectrum's phase transitions in other settings.  Researchers have studied phase transitions for non-uniformly expanding interval maps that have neutral fixed points.  They use thermodynamical formalism and they respectively prove explicit formulae for the multifractal spectrum.  Olivier's paper \cite{olivier2000structure} considers a cookie cutter on $[0,1]$ and in turn, takes an induced map defined on a Cantor set generated by this cookie cutter.  Nakaishi's paper \cite{nakaishi2000multifractal} considers piecewise interval maps on $[0,1]$ and an induced transformation generated by these maps.  We note that Nakaishi's paper is related to a paper by Pollicott and Weiss \cite{pollicottmultifractal}.  The multifractal spectrum with respect to Bernoulli convolutions for algebraic paramters has been analysed in an example in Feng and Olivier's paper \cite{feng2003multifractal}.

Now that we have given some background into multifractal analysis, we outline our paper's results and methodology to obtain these results.  This paper is about multifractal analysis in the setting of the countable Markov shift $\Sigma$ and the interval $[0,1].$  In our analysis of $f_{\mu}(\alpha),$ we use thermodynamic formalism.  We take the locally H\"older potential functions $\phi:\Sigma \rightarrow \mathbb{R}^{-}$ with Gibbs measure $\mu$ and the metric potential $\psi:\Sigma \rightarrow \mathbb{R}^{+}$ such that $\psi=\log|G'|$ for some expanding map $G.$ 

We use results from Sarig about Gibbs states on countable Markov shifts to prove the existence of Gibbs states for potential functions related to $\phi$ and $\psi.$  These Gibbs states help us show that the multifractal spectrum has phase transitions.  Proving the following theorem about the analyticity of the multifractal spectrum requires using results from Sarig, Mauldin, Urba\'nski, and Iommi.  We define $\alpha_{\lim}$ as follows:
Let $\bar{i} \in \Sigma$ be such that $\bar{i}=(i,i,...).$  Take
\[\alpha_{\lim}:=\lim\limits_{i \rightarrow \infty}\frac{\phi(\bar{i})}{-\psi(\bar{i})}\]
if the limit exists.  We also let $\alpha_{\inf}:=\inf\{d_{\mu}(x): x \in \Sigma\}$ and $\alpha_{\sup}:=\sup\{d_{\mu}(x): x \in \Sigma\}.$  We remark that we will introduce a different definition of $\alpha_{\inf}$ and $\alpha_{\sup}$ in our paper and then prove the equivalence of both definitions of $\alpha_{\inf}$ and $\alpha_{\sup}.$

\begin{theorem}\label{thm:M1}
Let $\phi: \Sigma \rightarrow \mathbb{R}^{-}$  be a potential with Gibbs measure $\mu$ such that $P(\phi)<\infty.$ and $\psi: \Sigma \rightarrow \mathbb{R}^{+}$ be a metric potential.  Assume that $\phi$ and $\psi$ are non-cohomologous locally H\"older potentials such that $\alpha_{\lim}<\infty.$   
\begin{enumerate}
\item There exist intervals $A_{i}$ such that $f_{\mu}(\alpha)$ is analytic on each of their interiors.  
\item The interval $(\alpha_{\inf}, \alpha_{\sup})=\cup_{i=1}^{j}A_{i}$ such that $j=\{1,2,3,4\}.$ 
\item The multifractal spectrum is concave on $(\alpha_{\inf}, \alpha_{\sup}),$ has its maximum at $\alpha(0),$ and has zero to three phase transitions.
\end{enumerate}
\end{theorem}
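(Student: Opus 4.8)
The plan is to realise $f_\mu$ as a Legendre transform of a suitable temperature function and then transfer analyticity, concavity, and the location of the maximum from the pressure, while reading off the phase transitions from the points where the thermodynamic regime of the pressure changes. Following Iommi's expression for the multifractal spectrum, I would first record the conditional variational principle, which writes
\[
f_\mu(\alpha)=\sup\left\{\frac{h_\nu(\sigma)}{\int\psi\,d\nu}\ :\ \nu\in\mathcal{M}_\sigma,\ \frac{\int\phi\,d\nu-P(\phi)}{-\int\psi\,d\nu}=\alpha\right\},
\]
the supremum being over shift-invariant probability measures with $\int\psi\,d\nu<\infty$ (the numerator ratio being the Gibbs form of $d_\mu$ and $h_\nu/\int\psi\,d\nu$ the dimension of $\nu$). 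Introduce the temperature function $T$ by declaring $T(q)$ to be the root of $t\mapsto P\big(q(\phi-P(\phi))-t\psi\big)=0$ on the interval of $q$ for which a finite root exists, and set $\alpha(q):=-T'(q)$. The thermodynamic identity $f_\mu(\alpha(q))=T(q)+q\,\alpha(q)$ then exhibits $f_\mu$ as the (concave) Legendre transform of $T$ on the range of $\alpha(\cdot)$.

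For part (1), and for the concavity and the location of the maximum in part (3), I would invoke real-analyticity of the pressure in the countable setting. By Sarig's and Mauldin--Urba\'nski's theory, $(q,t)\mapsto P\big(q(\phi-P(\phi))-t\psi\big)$ is real-analytic and convex on the open region where it is finite and the potential is positive recurrent (so a unique equilibrium state exists). The non-cohomology hypothesis forces the Hessian to be nondegenerate, hence the pressure is \emph{strictly} convex there; by the implicit function theorem $T$ is real-analytic and strictly convex on the interior of its domain, so $\alpha(q)=-T'(q)$ is strictly decreasing and the Legendre transform $f_\mu$ is analytic and strictly concave on the corresponding $\alpha$-interval. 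Evaluating at $q=0$ gives $T(0)=\dim_H\Sigma$ (the root of Bowen's equation $P(-t\psi)=0$) and $f_\mu(\alpha(0))=T(0)$; since $\tfrac{d}{dq}f_\mu(\alpha(q))=q\,\alpha'(q)$ vanishes only at $q=0$, concavity forces the global maximum to be attained at $\alpha(0)$.

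The heart of the argument is locating the breakpoints, i.e.\ parts (2) and the transition count in (3). A phase transition of $f_\mu$ can occur only where $T$ fails to be analytic, and in the non-compact setting there are at most two such mechanisms: a \emph{finiteness} transition, at the critical abscissa $q^\ast$ beyond which $P\big(q(\phi-P(\phi))-t\psi\big)$ ceases to be finite, and a \emph{recurrence} transition, where the equilibrium state is lost as the potential passes from positive recurrent to transient. To these one adds the affine piece of the spectrum forced by escaping mass: using the equivalence of the two definitions of $\alpha_{\inf}$ and $\alpha_{\sup}$ established earlier, I would identify the endpoint of $(\alpha_{\inf},\alpha_{\sup})$ governed by $\alpha_{\lim}$ and show that on the part of the domain where the supremum in Iommi's principle is realised only in the limit, by measures escaping to infinity with ratio tending to $\alpha_{\lim}$, the spectrum becomes linear, the junction to the analytic part being a single phase transition. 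Tracking the strictly monotone image $\alpha(q)$ of each analytic block together with these at most three junctions yields $(\alpha_{\inf},\alpha_{\sup})=\bigcup_{i=1}^{j}A_i$ with $j\in\{1,2,3,4\}$, equivalently zero to three phase transitions; the hypothesis $\alpha_{\lim}<\infty$ keeps the relevant endpoint finite and caps the count.

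The main obstacle is precisely this counting. Because $\Sigma$ is non-compact, the pressure may be infinite or correspond to a non-recurrent potential on part of the parameter range, and one must verify that the finiteness transition, the recurrence transition, and the $\alpha_{\lim}$-induced linear junction are the \emph{only} sources of non-analyticity and that they total at most three. This requires careful convex-analytic bookkeeping of the one-sided behaviour of $T$ and $T'$ at $q^\ast$ and at the edge of the recurrent region, together with the identification of $\lim_{q}\alpha(q)$ with $\alpha_{\lim}$ and with the endpoints $\alpha_{\inf},\alpha_{\sup}$. The non-cohomology assumption (ruling out a globally affine spectrum) and $\alpha_{\lim}<\infty$ (ruling out an unbounded escaping-mass direction) are exactly the inputs that pin the bound at three.
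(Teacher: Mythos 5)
Your framework---realising $f_\mu$ as the Legendre transform of the temperature function, obtaining analyticity of $T$ from Sarig's analyticity of the pressure plus the implicit function theorem, and locating the maximum at $\alpha(0)$ via $\frac{\mathrm{d}}{\mathrm{d}\alpha}f_\mu(\alpha(q))=q$---is the same as the paper's. The gap is in the part you yourself flag as ``the heart of the argument'': the transition count is asserted through mechanisms that do not match the BIP setting, and the lemma that actually produces the bound is never proved. First, under the standing BIP assumption there is no independent ``recurrence transition'': by Sarig's existence theorem (Theorem \ref{thm:S2}), a locally H\"older potential has a unique invariant Gibbs state if and only if its pressure is finite, so losing the equilibrium state and losing finiteness are the same event, not two separate sources of non-analyticity. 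Second, there is no ``critical abscissa $q^{\ast}$ beyond which the pressure ceases to be finite'': when $\alpha_{\lim}<\infty$ and $t_{\infty}$ is finite, $\mathcal{P}(q\phi-t\psi)<\infty$ for \emph{every} $q$ once $t$ exceeds a finite threshold $\tilde{t}(q)$; what varies with $q$ is whether the root $T(q)$ of the pressure equation lies strictly above this threshold or is glued to it.

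The paper's counting rests on two facts your proposal never establishes: (i) the finiteness threshold is affine, $\tilde{t}(q)=-\alpha_{\lim}q+t_{\infty}$ (Proposition \ref{prop:L1}; this is precisely where existence and finiteness of $\alpha_{\lim}$ enter), and (ii) since $T$ is convex and $\tilde{t}$ is affine, the coincidence set $Q=\{q:T(q)=\tilde{t}(q)\}$ is an interval $[q_{0},q_{1}]$ (Proposition \ref{prop:Q1}), so $T$ has at most two non-analytic points (Proposition \ref{prop:AAA1}). Dually, $f_\mu$ is analytic and strictly concave on $\{\alpha(q):q\in Q^{\complement}\}$, while on the gap $(\alpha^{+},\alpha^{-})$ it consists of \emph{two} affine pieces, $T(q_{1})+q_{1}\alpha$ on $(\alpha^{+},\alpha_{\lim})$ and $T(q_{0})+q_{0}\alpha$ on $(\alpha_{\lim},\alpha^{-})$, with distinct slopes $q_{1}\neq q_{0}$ (Proposition \ref{prop:A2}); the kink where these meet at $\alpha_{\lim}$ is the third phase transition and supplies the fourth interval $A_{i}$. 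Your picture of a single linear piece whose ``junction to the analytic part [is] a single phase transition'' cannot produce the decomposition with $j\le 4$, and without (i)--(ii) your assertion that the mechanisms ``total at most three'' is exactly the unproved content of the theorem: the convex-analytic bookkeeping you defer \emph{is} the proof.
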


We prove a theorem about the behaviour of the multifractal spectrum when $\alpha_{\lim}$ is infinite.  Since some of the earlier results about the analyticity of the mutlfractal spectrum are still true for $\alpha_{\lim}=\infty,$ we prove a theorem stating that the multifractal spectrum has $0$ to $1$ phase transition as provided below.

\begin{theorem}\label{thm:M2}
Let $\phi: \Sigma \rightarrow \mathbb{R}^{-}$  be a potential with Gibbs measure $\mu$ such that $\mathcal{P}(\phi)<\infty$ and $\psi: \Sigma \rightarrow \mathbb{R}^{+}$ be a metric potential.  Assume that $\phi$ and $\psi$ are non-cohomologous locally H\"older potentials such that $\alpha_{\lim}=\infty.$   
\begin{enumerate}
\item There exist intervals $A_{i}$ such that $f_{\mu}(\alpha)$ is analytic on each of their interiors.  
\item The interval $(\alpha_{\inf}, \alpha_{\sup})=\cup_{i=1}^{j}A_{i}$ such that $j=\{1,2\}.$  
\item The multifractal spectrum is concave on $(\alpha_{\inf}, \alpha_{\sup}),$ is equal to its maximum $f_{\mu}(\alpha(0))$ on $(\alpha(0),\alpha_{\sup}),$ and has zero to one phase transition.
\end{enumerate}
\end{theorem}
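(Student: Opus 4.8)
The plan is to realise $f_{\mu}$ as a Legendre-type transform of a temperature function and then read its regularity off the analyticity domain of the Sarig pressure, exactly as in the proof of Theorem~\ref{thm:M1}; the role of the hypothesis $\alpha_{\lim}=\infty$ will be to collapse the entire super-maximal part of the spectrum into a single flat piece. Using the Gibbs property to write $\log\mu([x_{1},\dots,x_{m}])\asymp S_{m}\phi(x)-mP(\phi)$ and $\log|[x_{1},\dots,x_{m}]|\asymp -S_{m}\psi(x)$, the level set $X_{\alpha}^{s}$ is the set of points whose ratio of Birkhoff sums equals $\alpha$, and Iommi's conditional variational principle \cite{iommi2005multifractal} gives
\[
f_{\mu}(\alpha)=\sup\Big\{\tfrac{h(\nu)}{\int\psi\,d\nu}:\ \nu\in\mathcal{M}_{\sigma},\ \tfrac{\int\phi\,d\nu-P(\phi)}{-\int\psi\,d\nu}=\alpha\Big\}.
\]
I would introduce the temperature function $T(q)$ defined implicitly by $P(q\phi+T(q)\psi)=qP(\phi)$ and, via the Lagrange-multiplier computation on the constrained supremum, identify $\alpha=T'(q)$ as the conjugate variable, so that $f_{\mu}$ is the Legendre transform of $T$ over the interior of the admissible range of $q$. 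Concavity of $f_{\mu}$ will then follow from the concavity of $T$, i.e. from convexity of the pressure.

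Next I would pin down the analyticity domain in $q$. Since $\phi<0$, for $q>0$ the potential $q\phi+t\psi$ tends to $-\infty$ along the fixed points $\bar i$ — here $\alpha_{\lim}=\infty$, that is $|\phi(\bar i)|\gg\psi(\bar i)$, makes $q\phi$ dominate — so the Gurevich pressure is finite; combined with $P(\phi)<\infty$, the Big Images Property, and the analyticity of the pressure on the interior of its domain of finiteness (Sarig, Mauldin--Urba\'nski), the implicit function theorem applied to $P(q\phi+t\psi)-qP(\phi)$ — legitimate because $\partial_{t}P=\int\psi\,d\nu_{q}>0$, where $\nu_{q}$ is the equilibrium state of $q\phi+T(q)\psi$ — yields that $T$ is real-analytic for every $q>0$, and strictly concave there because $\phi$ and $\psi$ are non-cohomologous (the relevant second derivative is a non-degenerate asymptotic variance). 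Consequently $f_{\mu}$ is analytic and strictly concave on the corresponding interval $(\alpha_{\inf},\alpha(0))$, where $\alpha(0)=T'(0)$ and $\alpha_{\inf}=\lim_{q\to\infty}T'(q)$; this furnishes the single analytic branch $A_{1}$ to the left of the maximum.

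The crux is the behaviour at $q<0$. Here $\alpha_{\lim}=\infty$ is decisive: for any $q<0$ and any fixed $t$ one has $q\phi(\bar i)+t\psi(\bar i)\ge|q|\,|\phi(\bar i)|-|t|\,\psi(\bar i)\to+\infty$, since $|\phi(\bar i)|/\psi(\bar i)\to\infty$; hence $\sum_{i}e^{\,q\phi(\bar i)+t\psi(\bar i)}=\infty$ and the pressure is $+\infty$ for all $t$. Thus $T(q)$ is undefined for $q<0$ and the domain of the temperature function is exactly $q\ge 0$. Taking the Legendre transform of a function analytic and strictly concave on $[0,\infty)$ and set to $+\infty$ on $(-\infty,0)$ forces the boundary value at $q=0$ to govern all $\alpha\ge\alpha(0)=T'(0)$, so that $f_{\mu}$ is constant there with value $f_{\mu}(\alpha(0))=\dim_{H}\Sigma$ (as $T(0)$ solves Bowen's equation $P(T(0)\psi)=0$ for the full shift). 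Because the same hypothesis forces $\alpha_{\sup}=\sup_{i}d_{\mu}(\bar i)=\lim_{i}\frac{\phi(\bar i)-P(\phi)}{-\psi(\bar i)}=\infty$, this flat piece is the whole interval $(\alpha(0),\infty)=A_{2}$.

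Assembling the pieces yields the theorem: $(\alpha_{\inf},\alpha_{\sup})=A_{1}\cup A_{2}$ with $A_{1}=(\alpha_{\inf},\alpha(0))$ analytic and strictly concave and $A_{2}=(\alpha(0),\infty)$ flat, so $j\in\{1,2\}$ with a single phase transition at $\alpha(0)$ (a strictly concave branch cannot meet a constant analytically), degenerating to zero transitions when the analytic branch is absent, i.e. $\alpha(0)=\alpha_{\inf}$; global concavity follows from the Legendre structure together with the continuity match $f_{\mu}(\alpha(0)^{-})=\dim_{H}\Sigma$, and the maximum is attained at $\alpha(0)$. I expect the main obstacle to be the rigorous justification of the flat piece \emph{from below}: since the defining supremum is not attained for $\alpha>\alpha(0)$ (mass escapes to infinity), one must either invoke Iommi's formula in a form that already records this unattained value, or construct explicitly a sequence of invariant measures $\nu_{n}$ meeting the constraint with $h(\nu_{n})/\int\psi\,d\nu_{n}\to\dim_{H}\Sigma$ by superposing a dimension-maximising measure with mass pushed onto ever-larger symbols — and the failure of upper semicontinuity of entropy on the non-compact space $\mathcal{M}_{\sigma}(\Sigma)$ is exactly what makes controlling this limit delicate. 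A secondary point requiring care is confirming that no further transition hides in the left branch (no finite upper cut-off in $q$ at which the equilibrium state disappears); this is precisely where $\alpha_{\lim}=\infty$ differs from $\alpha_{\lim}<\infty$, as the critical line in the $(q,t)$-plane whose slope is governed by $\alpha_{\lim}$ — the source of the extra transitions in Theorem~\ref{thm:M1} — degenerates as $\alpha_{\lim}\to\infty$, leaving only the transition at $\alpha(0)$.
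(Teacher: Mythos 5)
Your proposal is correct and takes essentially the same route as the paper: both realise $f_{\mu}$ as the Fenchel--Legendre transform of the temperature function (Theorem \ref{thm:I1}), obtain the analytic, strictly concave branch from Sarig's analyticity of the pressure plus the implicit function theorem for $q>0$, and obtain the flat piece on $(\alpha(0),\alpha_{\sup})$ from the fact that $\alpha_{\lim}=\infty$ forces $\mathcal{P}(q\phi-t\psi)=+\infty$ for every $q<0$ and every $t$, so that $Q$ is a half-line ending at $q_{1}=0$ and the linear piece of Proposition \ref{prop:A2} has slope $0$. The only real difference is explicitness: the paper's proof is a one-line appeal to Propositions \ref{prop:ID1}, \ref{prop:A2} and \ref{prop:NO1} (leaving the determination of $Q=(-\infty,0]$ implicit), whereas you derive that structure directly, and the ``flat piece from below'' issue you flag is resolved exactly as you suspect --- by invoking Iommi's formula, which is what Proposition \ref{prop:A2} does.
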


Finally, we apply Theorem \ref{thm:M1} and Theorem \ref{thm:M2} to the Gauss map $G$ by defining our locally H\"older potential as $\psi(x)=\log|G'(\pi(x))|$ with respect to the coding map $\pi:\Sigma \rightarrow [0,1].$  We provide examples that apply Theorems \ref{thm:M1} and \ref{thm:M2} to show that the multifractal spectrum has up to three phase transitions.  Also, we provide an example in which the multifractal spectrum has infinitely many phase transitions.  In these examples, we will use locally H\"older potentials to estimate $\phi$ and $\psi.$  Now, we will define some notation and concepts from thermodynamic formalism.

\subsection*{Acknowledgements}\hspace*{\fill} \par
The author thanks Dr.\ Thomas Jordan  for his direction and guidance in the writing of this paper and Dr.\ Godofredo Iommi for his helpful comments.

\section{Thermodynamic Formalism}
We now introduce some concepts from thermodynamic formalism.

\subsection{Introductory Definitions from Thermodynamic Formalism}\hspace*{\fill} \par
Before proving our result about the phase transitions of the multifractal spectrum, we introduce some definitions from thermodynamic formalism.  Denote $S$ as our countable state space and $A=(a_{lm})_{SxS}$ as our transition matrix of zeroes and ones.  We will be treating $S$ as $\mathbb{N}.$  Note that $A$ can be represented by a directed graph.  We let $$\Sigma_{A}:=\{x \in S^{\mathbb{N}}: a_{x_{i}x_{i+1}}=1 \text{ for every } i \ge 1\}.$$
We take $\sigma: \Sigma_{A} \rightarrow \Sigma_{A}$ to be the standard left shift.  We now define the topology for our countable state Markov shift $\Sigma_{A}.$

\begin{definition}
Given $x_{1}, ..., x_{n}$ symbols in $S,$ define a {\em cylinder set} in $\Sigma_{A}$ as
$$[x_{1},...,x_{n}]=\{y \in \Sigma_{A}: y_{i}=x_{i}  \text{ for } 1 \le i \le n\}.$$
\end{definition}

These cylinder sets form the topology for $\Sigma_{A}.$  Two important assumptions for our countable Markov shift are defined below.
\begin{definition}
The shift space $\Sigma_{A}$ satisfies the {\em big images and pre-images property} if there is a finite set $\{c_{1},c_{2},...,c_{m}\}$ from our alphabet $S$ such that for each $d \in S,$ there are $i,j \in \{1,...,m\}$ such that
$$a_{c_{i}d}a_{dc_{j}}=1.$$
\end{definition}

\begin{definition}
$\sigma:\Sigma_{A} \rightarrow \Sigma_{A}$ is said to be {\em topologically mixing} if for all $a,b \in S,$ there exists $N_{ab} \in \mathbb{N}$ such that for all $n>N_{ab},$
$$[a] \cap \sigma^{-n}[b] \not= \emptyset.$$
\end{definition}

We provide the following remark.  Since $\Sigma_{A}$ is a non-compact shift space, the existence of topological mixing alone for $\Sigma$ is not enough for a Gibbs measure to exist.  For this reason, Sarig \cite{sarig2003existence} proves that the combination of topologically mixing and the BIP (big images and pre-images property) is sufficient and necessary for a Gibbs measure to exist.  Denote $\Sigma_{A}$ as $\Sigma.$  We assume that $\Sigma$ satisfies the big images and pre-images property and it is topological mixing.  Now, we must define a property for functions on $\Sigma.$
\begin{definition}
Let $\psi: \Sigma \rightarrow \mathbb{R}.$  The {\em$n$-th variation} of $\psi$ is given by
$$V_{n}(\psi):=\sup\limits_{[x_{1},...,x_{n}] \subset \Sigma_{A}}\sup\limits_{x,y \in [x_{1},...,x_{n}]}|\psi(x)-\psi(y)|.$$
\end{definition}

\begin{definition}
Let $\psi: \Sigma \rightarrow \mathbb{R}.$  $\psi$ is said to be {\em locally H\"older continuous} if there exists $C>0$ and $\theta \in (0,1)$ such that for each $n \in \mathbb{N},$
$$V_{n}(\psi) \le C\theta^{n}.$$
\end{definition}

Local H\"older continuity is important in the definition of our metric for $\Sigma.$  Consider the following metric $d$ on the countable shift $\Sigma.$  Take the sequences $x=(x_{1},x_{2},...), y=(y_{1},y_{2},...) \in \Sigma.$  Find the first common, starting subword in which $x$ and $y$ agree. \par 
In other terms, take $x \wedge y=x_{1},...,x_{k}$ such that $k=\max\{m \in \mathbb{N}:x_{i}=y_{i} \text{ for all } 1 \le i \le m\}.$  For each $i \in \mathbb{N},$ let $0<r_{i}<1.$
Then, given $x$ and $y,$ we define our metric as
$$d(x,y)=r_{x_{1}} \cdots r_{x_{k}}.$$

We can define a potential $\psi$ connected to this metric.  In many of our examples, we will define a potential $\psi: \Sigma \rightarrow \mathbb{R}^{+}$ as a locally constant function:
\[\psi(x)=\log{r_{x_{1}}^{-1}}=-\log{r_{x_{1}}}.\]

Then,
\begin{equation}\label{eq:B}
0 \le \prod_{j=0}^{n-1}(\exp(\psi(\sigma^{j}(x))))^{-1}=(r_{x_{1}}^{-1}r_{x_{2}}^{-1}\cdots r_{x_{n}}^{-1})^{-1}=r_{x_{1}}r_{x_{2}} \cdots r_{x_{n}}.
\end{equation}

Given our metric,
\begin{equation}\label{eq:C}
|[x_{1},...,x_{n}]|=\sup\limits_{x,y \in [x_{1},...,x_{n}]}d(x,y)=r_{y_{1}}r_{y_{2}} \cdots r_{y_{n}}.
\end{equation}

Therefore, by Equations (\ref{eq:B}) and (\ref{eq:C}),
\[\prod_{j=0}^{n-1}(\exp(\psi(\sigma^{j}(x))))^{-1} =r_{x_{1}}r_{x_{2}} \cdots r_{x_{n}}.\]
In fact, the equation above relates the diameter of the cylinder set $[x_{1},...,x_{n}]$ to the potential $\psi.$  Potentials satisfying such an inequality are called metric potentials, which are defined as follows. 

\begin{definition}
A positive, locally H\"older potential $\psi$ with the property that there exists a constant $C>0$ such that
$$\frac{1}{C}\prod_{j=0}^{n-1}(\exp(\psi(\sigma^{j}(y))))^{-1} \le |[x_{1},x_{2},...,x_{n}]| \le C\prod_{j=0}^{n-1}(\exp(\psi(\sigma^{j}(y))))^{-1},$$
for every $y=(x_{1},x_{2},...,x_{n},y_{n+1},...) \in [x_{1},...,x_{n}],$ is said to be a {\em metric potential}.
\end{definition}
 
Thus, $\psi(x)=\log{r_{x_{1}}^{-1}}$ is a metric potential with respect to the chosen metric $d$ on $\Sigma$ and the constant $C=1.$  We can define more general metric potentials on different metrics on $\Sigma.$  We remark that it is important for $\psi$ to be a metric potential defined by a hyperbolic map.  For instance, take a general expanding map $T:[a,b] \rightarrow [a,b]$ for $b > a \ge 0$ and then, assuming that $\psi(x)=\log|T'(\pi(x))|,$ we find that
\[-C\exp\left(\sum_{j=0}^{n-1}-\log|T'(\pi(x))|\right) \le |[x_{1},x_{2},...,x_{n}]| \le C\exp\left(\sum_{j=0}^{n-1}-\log|T'(\pi(x))|\right).\]
Hence, given the expanding map $T$ and metric potential $\psi,$ we are able to relate the diameter of cylinder sets on $\Sigma$ to the radii of balls on an interval.  Defining our metric by using our potential also gives an additional expression for the local dimension of sets in $\Sigma.$ Furthermore, we would be able to calculate the Hausdorff dimension of sets with local dimension $\alpha,$ so we must give the following definition.

\begin{definition}
Let $\psi: \Sigma \rightarrow \mathbb{R}$ be locally H\"older.  We define the {\em topological pressure} as follows
$$\mathcal{P}(\psi)= \lim\limits_{n \rightarrow \infty}\frac{1}{n}\log\sum_{[x_{1},...,x_{n}] \subset \Sigma}\exp\sup\limits_{y \in [x_{1},...,x_{n}]}\left(\sum_{i=0}^{n-1}\psi(\sigma^{i}(y))\right).$$
\end{definition}

We also define another form of pressure.

\begin{definition}
Let $\psi: \Sigma \rightarrow \mathbb{R}$ be locally H\"older.  We define the {\em Gurevich pressure} as follows
$$\mathcal{P}_{G}(\psi)= \lim\limits_{n \rightarrow \infty}\frac{1}{n}\log\sum_{\sigma^{n}x=x}\exp\left(\sum_{i=0}^{n-1}\psi(\sigma^{i}(x))\right)\mathbbm{1}_{[x_{1}]}$$
such that $\mathbbm{1}_{[x_{1}]}$ is the indicator function on the cylinder $[x_{1}].$
\end{definition}

Because $\Sigma$ is topologically mixing, the Gurevich pressure does not depend on $x_{1}$.  Sarig proves that the Gurevich pressure is equivalent to topological pressure when $\Sigma$ is BIP.  We provide this result by Sarig \cite{sarig1999thermodynamic} (Pg 1571, Theorem 3), Mauldin, and Urba\'nski \cite{mauldin2003graph} (Pg 11, Theorem 2.1.8) below.

\begin{proposition}
Let $\Sigma$ be topologically mixing and $\psi:\Sigma \rightarrow \mathbb{R}$ be locally H\"older such that $\sup\psi<\infty.$ Let $M_{\sigma}(\Sigma)$ be the set of $\sigma-$invariant measures.  Then,
\[\mathcal{P}(\psi)=\sup_{\mu \in M_{\sigma}(\Sigma)}\left\{\int \phi\, \mathrm{d}\mu + h(\mu)\right\}=\mathcal{P}_{G}(\psi).\]
\end{proposition}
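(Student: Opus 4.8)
The plan is to split the double equality into its two natural halves: the identity $\mathcal{P}(\psi)=\mathcal{P}_{G}(\psi)$ between the topological and Gurevich pressures, and the variational principle $\mathcal{P}_{G}(\psi)=\sup_{\mu}\{\int\psi\,\mathrm{d}\mu+h(\mu)\}$ (where the statement's $\phi$ should read $\psi$). Throughout write $S_{n}\psi=\sum_{i=0}^{n-1}\psi\circ\sigma^{i}$, and let $Z_{n}(\psi)$ and $Z^{\ast}_{n}(\psi)$ denote the cylinder partition sum and the periodic partition sum appearing in $\mathcal{P}(\psi)$ and $\mathcal{P}_{G}(\psi)$ respectively. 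For the first identity I would begin by extracting a bounded distortion estimate from local H\"older continuity: if $x,y$ lie in a common cylinder $[x_{1},\dots,x_{n}]$, then $\sigma^{j}x$ and $\sigma^{j}y$ share their first $n-j$ symbols, so $|S_{n}\psi(x)-S_{n}\psi(y)|\le\sum_{k=1}^{n}V_{k}(\psi)\le\sum_{k=1}^{\infty}C\theta^{k}=:B<\infty$.

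With this in hand, the inequality $\mathcal{P}_{G}(\psi)\le\mathcal{P}(\psi)$ is direct: every $\sigma^{n}$-fixed point in $[a]$ determines a distinct $n$-cylinder, and distortion bounds $\exp S_{n}\psi(x)$ by $e^{B}\exp\sup_{[x_{1},\dots,x_{n}]}S_{n}\psi$, so $Z^{\ast}_{n}(\psi)\le e^{B}Z_{n}(\psi)$. The reverse inequality $\mathcal{P}(\psi)\le\mathcal{P}_{G}(\psi)$ is where the BIP hypothesis enters: using the finite set $\{c_{1},\dots,c_{m}\}$ together with topological mixing, I would close up an arbitrary admissible word $x_{1}\cdots x_{n}$ into an admissible loop by appending a connecting word of uniformly bounded length $p$ drawn from these symbols, producing a periodic point of period $n+p$. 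The Birkhoff weight gained along the connector is uniformly bounded, since the connector visits only the finitely many symbols $c_{1},\dots,c_{m}$, on which local H\"older continuity forces $\psi$ to be bounded; as the construction is finitely-to-one, this yields $Z_{n}(\psi)\le C'Z^{\ast}_{n+p}(\psi)$, and taking $\tfrac{1}{n}\log$ and $n\to\infty$ (with $\tfrac{n+p}{n}\to1$) gives the claim.

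For the variational principle I would pass through finite subsystems. For each finite $F\subset S$, the restriction $\Sigma_{F}$ is a compact subshift on which the classical variational principle holds and on which topological and Gurevich pressure coincide, giving $\mathcal{P}_{G}(\psi|_{\Sigma_{F}})=\sup\{\int\psi\,\mathrm{d}\mu+h(\mu):\mu(\Sigma_{F})=1\}$. The crucial approximation result I would establish is Sarig's identity $\mathcal{P}_{G}(\psi)=\sup_{F}\mathcal{P}_{G}(\psi|_{\Sigma_{F}})$: the inequality $\ge$ is monotonicity in the alphabet, while $\le$ requires showing that periodic orbits visiting only symbols in a sufficiently large finite set already exhaust the exponential growth rate of $Z^{\ast}_{n}(\psi)$, a tail estimate leaning on $\sup\psi<\infty$. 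Combining these, the $\ge$ half of the variational principle follows by restricting the supremum to measures carried by some $\Sigma_{F}$, and the $\le$ half follows by approximating an arbitrary invariant $\mu$ by finite-alphabet measures $\mu_{F}$ with $\int\psi\,\mathrm{d}\mu_{F}+h(\mu_{F})\to\int\psi\,\mathrm{d}\mu+h(\mu)$ from below, each term being bounded by $\mathcal{P}_{G}(\psi|_{\Sigma_{F}})\le\mathcal{P}_{G}(\psi)$.

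I expect the main obstacle to be the entropy control in the two approximation steps. Entropy is only upper semicontinuous and the shift is non-compact, so neither the truncation of a general invariant measure to a finite alphabet nor the passage $\sup_{F}\mathcal{P}_{G}(\psi|_{\Sigma_{F}})\to\mathcal{P}_{G}(\psi)$ transfers the finite-state results automatically; one must argue that no entropy or integral mass escapes to infinity, which is precisely where the BIP structure and the boundedness $\sup\psi<\infty$ do the real work. The bounded-distortion and closing-up arguments of the first identity are, by comparison, routine.
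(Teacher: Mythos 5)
A point of orientation first: the paper itself offers no proof of this proposition. It is quoted as a known theorem of Sarig \cite{sarig1999thermodynamic} (Theorem 3) and Mauldin--Urba\'nski \cite{mauldin2003graph} (Theorem 2.1.8), under the paper's standing assumption that $\Sigma$ also has the BIP property; so your attempt can only be measured against those sources. You correctly read the middle term's $\phi$ as a typo for $\psi$, and you correctly locate where BIP is needed (the identity $\mathcal{P}(\psi)=\mathcal{P}_G(\psi)$) versus where mixing and $\sup\psi<\infty$ suffice (the variational principle). Your first half is essentially the argument in the cited sources: the distortion constant $B=\sum_{k\ge 1}V_k(\psi)<\infty$, the injection of period-$n$ points into $n$-cylinders for $\mathcal{P}_G\le\mathcal{P}$, and the closing-up of admissible words via BIP-plus-mixing connectors for the reverse inequality. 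Two slips there are repairable: the connecting words need not visit only the symbols $c_1,\dots,c_m$ (what you actually need is that only finitely many connecting words occur, and that $V_1(\psi)<\infty$ bounds the oscillation of $\psi$ on each of the finitely many relevant $1$-cylinders, so the connector weight is uniformly bounded); and the closed-up orbit must pass through the fixed base symbol of the Gurevich sum, which you arrange by routing the connector through that symbol and cyclically rotating -- costing a multiplicity of at most the period, which is subexponential and hence harmless.

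The genuine gap is in the hard half of the variational principle, $h(\mu)+\int\psi\,\mathrm{d}\mu\le\mathcal{P}_G(\psi)$ for an arbitrary invariant $\mu$. Your plan is to approximate $\mu$ from below by invariant measures $\mu_F$ carried on finite-alphabet subsystems with $h(\mu_F)+\int\psi\,\mathrm{d}\mu_F\to h(\mu)+\int\psi\,\mathrm{d}\mu$. But there is no truncation operation that produces such $\mu_F$: every finite subsystem $\Sigma_F$ is a null set for a typical invariant measure on a countable shift (for instance for any fully supported Gibbs measure), so ``restrict and renormalise'' is unavailable, and the known ways of manufacturing approximants (induce on a cylinder, keep finitely many return words, pull back via Kac and Abramov) require proving exactly the no-entropy-escapes statement you yourself flag as the main obstacle. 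In other words, your reduction is circular: filling it is at least as hard as the theorem. Acknowledging the obstacle does not close it, and as written the second half of the proof does not exist. Both cited proofs avoid measure approximation entirely and prove this inequality directly: reduce to ergodic $\mu$, apply Shannon--McMillan--Breiman and Birkhoff to the countable partition into $1$-cylinders (with a separate argument when that partition has infinite entropy -- a case your sketch never addresses), and then run a counting/closing estimate comparing the cylinders carrying most of the measure against the partition and periodic-orbit sums; the hypothesis $\sup\psi<\infty$ enters there to keep $\int\psi\,\mathrm{d}\mu$ well defined in $[-\infty,\infty)$ and the estimates one-sided. Your step $\mathcal{P}_G(\psi)=\sup_F\mathcal{P}_G(\psi|_{\Sigma_F})$ is correct in outline (it is Sarig's Theorem 2), but its nontrivial direction is carried by an almost-supermultiplicativity argument for the restricted periodic sums, not by the tail estimate you gesture at.
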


We remark that Iommi, Jordan, and Todd \cite{iommijordantodd} (Pg 8, Theorem 2.10) proved that $\sup\psi<\infty$ is an unnecessary condition for the previous proposition.  We will provide a result that gives another way to calculate the topological pressure of a potential.  Let $\psi$ and $\psi$ be locally H\"older.  We can approximate the topological pressure of a potential $q\phi-t\psi$ (with $q,t \in \mathbb{R}$) on the shift $\Sigma$ by restricting the potential to a compact, invariant subset $K \subset \Sigma.$  For such a set $K,$ denote 
$$\mathcal{P}_{K}(q\phi-t\psi):=P((q\phi-t\psi)|_{K})$$
as the restriction of the topological pressure of $q\phi-t\psi$ to $K.$  Sarig \cite{sarig1999thermodynamic} (Pg 1570, Theorem 2), Mauldin, and Urba\'nski \cite{mauldin2003graph} (Pg 8, Theorem 2.1.5) have proven the following proposition.
\begin{proposition}
Let $\phi$ and $\psi$ be locally H\"older.  If $\mathcal{K}:=\{K \subset \Sigma: K \text{ compact and }\\ 
\sigma\text{-invariant, } K \not= \emptyset\},$ then
$$\mathcal{P}(q\phi-t\psi)=\sup\limits_{K \in \mathcal{K}}\mathcal{P}_{K}(q\phi-t\psi).$$
\end{proposition}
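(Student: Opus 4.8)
The plan is to establish the two inequalities separately, writing $g := q\phi - t\psi$, which is locally H\"older since $\phi$ and $\psi$ are. The inequality $\sup_{K \in \mathcal{K}} \mathcal{P}_K(g) \le \mathcal{P}(g)$ is the routine direction: it follows from monotonicity of the topological pressure under passage to a subsystem. For any compact $\sigma$-invariant $K \subset \Sigma$, every cylinder of $K$ is a cylinder of $\Sigma$ and the supremum of the Birkhoff sum over the (smaller) cylinder of $K$ is no larger than over the corresponding cylinder of $\Sigma$, so the defining partition sums for $\mathcal{P}_K(g)$ are dominated term by term by those for $\mathcal{P}(g)$; taking $\frac{1}{n}\log(\cdot)$, the limit in $n$, and then the supremum over $K$ yields the claim.

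The substance is the reverse inequality $\mathcal{P}(g) \le \sup_{K} \mathcal{P}_K(g)$. First I would replace $\mathcal{P}$ by the Gurevich pressure $\mathcal{P}_G$, which is legitimate by the preceding proposition (together with the Iommi--Jordan--Todd extension removing the $\sup g < \infty$ hypothesis), so that the pressure is computed from weighted periodic orbits. Then I would exhaust $\Sigma$ by the finite-alphabet subshifts $\Sigma_N := \{x \in \Sigma : x_i \in \{1,\dots,N\} \text{ for all } i\}$. Each $\Sigma_N$ is a subshift of finite type on a finite alphabet, hence compact and $\sigma$-invariant, and is nonempty for $N$ large; thus $\Sigma_N \in \mathcal{K}$. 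Writing, for a fixed symbol $a$,
$$Z_n(g,a) = \sum_{\sigma^n x = x} \exp\Big(\sum_{i=0}^{n-1} g(\sigma^i x)\Big)\mathbbm{1}_{[a]}(x), \qquad Z_n^{(N)}(g,a) = \sum_{\substack{\sigma^n x = x \\ x \in \Sigma_N}} \exp\Big(\sum_{i=0}^{n-1} g(\sigma^i x)\Big)\mathbbm{1}_{[a]}(x),$$
the key observation is that each periodic point uses only finitely many symbols, so $Z_n^{(N)}(g,a) \uparrow Z_n(g,a)$ as $N \to \infty$ for every fixed $n$, by monotone convergence.

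With this in hand, I would argue as follows. Topological mixing gives supermultiplicativity of the partition sums anchored at $a$, so $\mathcal{P}_G(g) = \sup_n \frac{1}{n}\log Z_n(g,a)$ and likewise $\mathcal{P}_{\Sigma_N}(g) \ge \frac{1}{n}\log Z_n^{(N)}(g,a)$ for every $n$. Fix $\varepsilon > 0$ and choose $n$ with $\frac{1}{n}\log Z_n(g,a) > \mathcal{P}_G(g) - \varepsilon$ (or arbitrarily large, if $\mathcal{P}_G(g) = \infty$). By the monotone convergence above, choose $N$ with $Z_n^{(N)}(g,a) \ge e^{-n\varepsilon} Z_n(g,a)$. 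Then
$$\sup_{K} \mathcal{P}_K(g) \ge \mathcal{P}_{\Sigma_N}(g) \ge \frac{1}{n}\log Z_n^{(N)}(g,a) \ge \frac{1}{n}\log Z_n(g,a) - \varepsilon > \mathcal{P}_G(g) - 2\varepsilon,$$
and letting $\varepsilon \to 0$ completes the proof.

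The main obstacle I anticipate is the limit interchange in the third paragraph: justifying that the growth rate of the full partition sum is approached by the growth rates of the truncated ones requires controlling the two limits $n \to \infty$ and $N \to \infty$ simultaneously. The monotone convergence $Z_n^{(N)} \uparrow Z_n$ for fixed $n$ handles $N$, while the characterization of the Gurevich pressure as a supremum over $n$ (valid under topological mixing) is what lets me fix a single $n$ and avoid interchanging the limits directly. A secondary technical point is confirming that restricting to $\Sigma_N$ preserves enough mixing for the inequality $\mathcal{P}_{\Sigma_N}(g) \ge \frac{1}{n}\log Z_n^{(N)}(g,a)$ to hold; if $\Sigma_N$ fails to be mixing, I would instead bound $\mathcal{P}_{\Sigma_N}(g)$ below by the periodic-orbit growth rate via the variational principle on the compact system $\Sigma_N$, where the standard finite-alphabet theory applies.
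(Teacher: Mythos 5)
The paper offers no proof of this proposition at all: it is quoted as a known result of Sarig (Theorem 2 of his 1999 paper) and Mauldin--Urba\'nski (Theorem 2.1.5 of their book), so your attempt has to be measured against the literature rather than against anything in the text. Your argument is, in essence, a correct reconstruction of that literature proof: the easy inequality $\sup_{K}\mathcal{P}_{K}\le\mathcal{P}$ by term-by-term domination of the cylinder partition sums, and the hard inequality by passing to the Gurevich pressure (legitimate here, since the paper's standing assumptions of topological mixing and BIP, together with the preceding proposition and the Iommi--Jordan--Todd remark, give $\mathcal{P}=\mathcal{P}_{G}$), exhausting $\Sigma$ by the compact finite-alphabet subshifts $\Sigma_{N}$, and exploiting the fact that each periodic orbit uses only finitely many symbols, so the anchored periodic-orbit sums $Z_{n}^{(N)}(g,a)$ increase to $Z_{n}(g,a)$ for each fixed $n$. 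Fixing $n$ first and only then choosing $N$ is exactly the right way to avoid the illegal interchange of limits, and this is what the cited proofs do as well.

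One step is stated more strongly than you can justify: the inequality $\mathcal{P}_{\Sigma_{N}}(g)\ge\frac{1}{n}\log Z_{n}^{(N)}(g,a)$ for every \emph{fixed} $n$ is not exact. What the natural argument gives --- gluing $k$ anchored period-$n$ orbits into one anchored period-$kn$ orbit (admissible precisely because they all begin with the same symbol $a$, so no mixing of $\Sigma_{N}$ is needed at all) and controlling the Birkhoff sums by local H\"older continuity --- is
$$\mathcal{P}_{\Sigma_{N}}(g)\ \ge\ \frac{1}{n}\log Z_{n}^{(N)}(g,a)-\frac{V}{n},\qquad V:=\sum_{k\ge 1}V_{k}(g)<\infty,$$
and the same distortion term reappears if you instead run your variational-principle fallback, since a measure assembled from weighted period-$n$ orbits does not integrate $S_{n}g$ exactly to the periodic Birkhoff sums. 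This imprecision does not break the proof: because $\frac{1}{n}\log Z_{n}(g,a)\to\mathcal{P}_{G}(g)$, you may choose $n$ simultaneously large enough that $\frac{1}{n}\log Z_{n}(g,a)>\mathcal{P}_{G}(g)-\varepsilon$ and $V/n<\varepsilon$, and your final chain of inequalities closes with $3\varepsilon$ in place of $2\varepsilon.$ With that repair, and with the parenthetical treatment of the case $\mathcal{P}_{G}(g)=\infty$ you already included, the argument is complete.
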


Compact subsets of our countable Markov shift $\Sigma$ include finite state Markov shifts $\Sigma_{n}.$  We will later use a nested sequence of finite state full shifts to approximate the topological pressure of $q\phi-t\psi$ on $\mathbb{N}^{\mathbb{N}}.$

\begin{definition}
A probability measure $\mu$ is said to be a {\em Gibbs measure} for the potential $\phi: \Sigma \rightarrow \mathbb{R}$ if there exist two constants $M$ and $P$ such that, for each cylinder $[x_{1},x_{2},...,x_{m}]$ and every $x \in [x_{1},x_{2},...,x_{m}],$
$$\frac{1}{M} \le \frac{\mu([x_{1},x_{2},...,x_{m}])}{\exp(-mP+\sum_{j=0}^{m-1}\phi(\sigma^{j}(x)))} \le M.$$
\end{definition}

In fact, Mauldin and Urba\'nski \cite{mauldin2003graph} (Pg 13, Proposition 2.2.2) proved that $P=\mathcal{P}(\phi).$  Now, we will define the locally H\"older potentials needed for our analysis of the multifractal spectrum.  

\subsection{The Potentials $\phi$ and $\psi$}\hspace*{\fill} \par
Throughout this paper, let $\phi: \Sigma \rightarrow \mathbb{R}^{-}$ be a locally H\"older potential such that $0 \le \mathcal{P}(\phi)<\infty.$  We assume that $\mu$ is the Gibbs measure for $\phi.$  We will show the existence of $\mu$ later.  Also, let $\psi: \Sigma \rightarrow \mathbb{R}^{+}$ be a locally H\"older, metric potential with respect to an appropriate metric.

\begin{definition}
Two functions $\phi:\Sigma \rightarrow \mathbb{R}^{-}$ and $\psi:\Sigma \rightarrow \mathbb{R}^{+}$ are {\em cohomologous} in a class $\mathcal{H}$ if there exists a function $u: \Sigma \rightarrow \mathbb{R}$ in the class $\mathcal{H}$ such that
$$\phi-\psi=u-u\circ{\sigma}.$$
\end{definition}

We assume that $\phi$ and $\psi$ are non-cohomologous to each other.  For brevity, we will instead state that $\phi$ and $\psi$ are non-cohomologous.  For $q, t \in \mathbb{R},$ consider the family of potentials $q\phi-t\psi.$  Given this family of potentials, we will analyse the behaviour and phase transitions of a function $T(q)$ dependent on $q.$  This analysis is needed for our result about the multifractal spectrum's phase transitions.

\subsection{The Functions $T(q)$ and $\tilde{t}(q)$ and the Limit $\alpha_{\lim}$}\hspace*{\fill} \par
We will later prove that the multifractal spectrum's phase transitions are closely related to the behaviour of a family of potentials $q\phi-t\psi.$  To follow this argument, we must define the following function.
\begin{definition}
For each $q \in \mathbb{R},$ the {\em temperature function} $T(q)$ is defined as
$$T(q):= \inf\{t \in \mathbb{R}:\mathcal{P}(q\phi-t\psi) \le 0\}$$
\end{definition}

Proposition 4.3 on Pg 1892 of Iommi \cite{iommi2005multifractal} states the following important result.
\begin{proposition}\label{prop:CD1}
$T(q)$ is a convex and decreasing function.
\end{proposition}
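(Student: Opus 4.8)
The plan is to prove convexity and monotonicity of $T(q)$ by exploiting the corresponding properties of the pressure function $(q,t) \mapsto \mathcal{P}(q\phi - t\psi)$, which is a standard and well-understood object in thermodynamic formalism. The key structural fact I would use is that $T(q)$ is defined implicitly through the level set $\mathcal{P}(q\phi - t\psi) = 0$: since $\psi > 0$, the map $t \mapsto \mathcal{P}(q\phi - t\psi)$ is strictly decreasing, so the infimum in the definition is attained (or approached) at the unique value $t = T(q)$ where the pressure crosses zero. Thus $T(q)$ is essentially the zero-set curve of the pressure, and properties of $T$ should descend from joint convexity of the pressure.

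First I would establish that the pressure function $F(q,t) := \mathcal{P}(q\phi - t\psi)$ is convex as a function of $(q,t) \in \mathbb{R}^2$. This follows from the variational principle stated in the earlier proposition: $\mathcal{P}(q\phi - t\psi) = \sup_{\nu \in M_\sigma(\Sigma)} \left\{ \int (q\phi - t\psi)\, \mathrm{d}\nu + h(\nu) \right\}$. For each fixed measure $\nu$, the integrand $\int(q\phi - t\psi)\,\mathrm{d}\nu + h(\nu) = q\int\phi\,\mathrm{d}\nu - t\int\psi\,\mathrm{d}\nu + h(\nu)$ is an affine function of $(q,t)$. A supremum of affine functions is convex, so $F(q,t)$ is convex. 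Next I would deduce that $T(q)$ is decreasing: fix $q_1 < q_2$. Since $\phi < 0$, increasing $q$ decreases $q\phi$ pointwise, hence decreases the pressure at fixed $t$; to restore the pressure to zero one must decrease $t$ (because $\psi > 0$ means decreasing $t$ raises the pressure). A careful comparison of the defining inequalities $\mathcal{P}(q_i\phi - T(q_i)\psi) \le 0$ yields $T(q_2) \le T(q_1)$.

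For convexity of $T$ itself, I would argue from the convexity of $F$ together with the implicit definition. Take $q_1, q_2$ and $\lambda \in (0,1)$, set $q_\lambda = \lambda q_1 + (1-\lambda) q_2$, and let $t_\lambda = \lambda T(q_1) + (1-\lambda)T(q_2)$. Using convexity of $F$ and the fact that $F(q_i, T(q_i)) \le 0$, I would bound
\[
F(q_\lambda, t_\lambda) \le \lambda F(q_1, T(q_1)) + (1-\lambda) F(q_2, T(q_2)) \le 0.
\]
Since $t \mapsto F(q_\lambda, t)$ is decreasing and $F(q_\lambda, t_\lambda) \le 0$, the infimum $T(q_\lambda)$ over $t$ with $F(q_\lambda, t) \le 0$ satisfies $T(q_\lambda) \le t_\lambda = \lambda T(q_1) + (1-\lambda)T(q_2)$, which is exactly convexity.

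The main obstacle I anticipate is verifying that the pressure $F$ is genuinely finite and well-behaved on the relevant region, and that the infimum in the definition of $T(q)$ is attained at the zero crossing rather than being degenerate (for instance, the pressure could remain positive for all $t$, or be identically $+\infty$). On a countable Markov shift, finiteness of pressure is not automatic, so I would need to invoke the hypotheses $0 \le \mathcal{P}(\phi) < \infty$ and the BIP plus topological mixing assumptions to guarantee that the variational principle applies and that $t \mapsto F(q,t)$ decreases through zero at a finite value. The strict monotonicity in $t$ (from $\psi > 0$ bounded away from zero on cylinders) is what makes the zero crossing unique and the implicit-function reasoning rigorous; handling the possibility of non-attainment or infinite pressure at the boundary of the domain is the delicate point that the finiteness assumption on $\mathcal{P}(\phi)$ is designed to control.
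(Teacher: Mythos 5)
There is nothing in the paper to compare your argument against: the paper does not prove this proposition, it quotes it from Proposition 4.3 of Iommi \cite{iommi2005multifractal}. Judged on its own terms, your route is sound and nearly complete. Joint convexity of $F(q,t)=\mathcal{P}(q\phi-t\psi)$ does hold --- either by your variational-principle argument (note that the variational principle as stated in Section 2 assumes the potential is bounded above, which can fail for $q<0$ since $\phi$ may be unbounded below; the paper's remark that Iommi--Jordan--Todd removed this hypothesis covers you, or you can bypass the issue entirely by applying H\"older's inequality to the partition functions $Z_{n},$ which gives $\mathcal{P}(\lambda f+(1-\lambda)g)\le\lambda\mathcal{P}(f)+(1-\lambda)\mathcal{P}(g)$ directly from the definition of pressure) --- and your monotonicity argument, via the pointwise inequality $q_{2}\phi\le q_{1}\phi$ for $q_{1}<q_{2}$ and the resulting inclusion of the sublevel sets $\{t:\mathcal{P}(q\phi-t\psi)\le0\},$ is correct. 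For comparison, the route implicit in the paper's own toolkit (and, in essence, Iommi's proof) goes through the approximation result of Sarig and Mauldin--Urba\'nski quoted in Section 2: one has $T(q)=\sup_{K}T_{K}(q)$ over compact invariant subsets $K\subset\Sigma,$ each classical temperature function $T_{K}$ is convex and decreasing by the compact theory, and both properties pass to suprema. Your argument buys self-containedness; the approximation argument buys the ability to import the finite-state theory wholesale.

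The one step you must repair is the inequality $F(q_{i},T(q_{i}))\le 0.$ You flag this as the delicate point, but your proposed cure (the hypothesis $\mathcal{P}(\phi)<\infty$) does not address it. The failure mode is exactly the situation this paper cares about: $t\mapsto F(q,t)$ is guaranteed finite and continuous only on $(\tilde{t}(q),\infty),$ and when $T(q)=\tilde{t}(q),$ i.e.\ $q\in Q,$ the set $\{t:F(q,t)\le0\}$ can be the open half-line $(T(q),\infty)$ with $F(q,T(q))$ positive or even $+\infty,$ so the right-hand side of your convexity chain is no longer $\le 0$ and the argument proves nothing. (The paper's example in Section 5.2 shows $T(q)$ can even equal $+\infty$ for some $q;$ so treat $T$ as extended-real-valued, the convexity inequality being vacuous whenever its right side is infinite.) The repair is the standard infimum argument: given $\varepsilon>0,$ choose $t_{i}\le T(q_{i})+\varepsilon$ with $F(q_{i},t_{i})\le0,$ which exists by the definition of $T(q_{i})$ as an infimum; convexity of $F$ gives $F(q_{\lambda},\lambda t_{1}+(1-\lambda)t_{2})\le0,$ hence $T(q_{\lambda})\le\lambda T(q_{1})+(1-\lambda)T(q_{2})+\varepsilon,$ and letting $\varepsilon\to0$ yields convexity. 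With that substitution your proof is complete.
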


We also define a function $\tilde{t}(q)$ which is similar to $T(q).$

\begin{definition}
For each $q \in \mathbb{R},$ the {\em function} $\tilde{t}(q)$ is defined as
$$\tilde{t}(q):=\inf\{t \in \mathbb{R}:\mathcal{P}(q\phi-t\psi) < \infty\}.$$
\end{definition}

We will explain the significance of the family of potentials $q\phi-\tilde{t}(q)\psi$ later.  We need the following limit to get an expression for $\tilde{t}(q).$

\begin{definition}\label{def:lim}
Let $\bar{i} \in \Sigma$ be such that $\bar{i}=(i,i,...).$  Let
\[\alpha_{\lim}=\lim\limits_{i \rightarrow \infty}\frac{\phi(\bar{i})}{-\psi(\bar{i})}\]
if the limit exists.
\end{definition}

Assume that
\[t_{\infty}:=\inf\{t \in \mathbb{R}: \mathcal{P}(-t\psi)<\infty\}\]
exists and it is finite.  When $\alpha_{\lim}$ exists, we will prove that
$$\tilde{t}(q)=-\alpha_{\lim}q+t_{\infty}.$$

\begin{definition}
Let $f:\Sigma \rightarrow \mathbb{R}$ be a potential.  We define the \text{\em nth partition function} to be $$Z_{n}(f)=\sum_{[x_{1},...,x_{n}] \subset \Sigma}\exp\sup\limits_{y \in [x_{1},...,x_{n}]}\left(\sum_{i=0}^{n-1}f\left(\sigma^{i}(y)\right)\right).$$
\end{definition}

The following lemma is a modified version of Proposition 2.1.9 on Pg 11 in Mauldin and Urba\'nski \cite{mauldin2003graph}.

\begin{lemma}\label{lemma:MU1}
If $\Sigma$ has the BIP property and $f:\Sigma \rightarrow \mathbb{R}$ is locally H\"older, then $\mathcal{P}(f)<\infty$ if and only if $Z_{1}(f)<\infty.$
\end{lemma}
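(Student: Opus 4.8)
The plan is to split the biconditional into its two implications: the direction $Z_1(f)<\infty \Rightarrow \mathcal P(f)<\infty$ follows from an approximate submultiplicativity of the partition functions, while the reverse direction is proved in contrapositive form, $Z_1(f)=\infty \Rightarrow \mathcal P(f)=\infty$, by a matching lower bound that is where the BIP property is genuinely used.

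\emph{The easy direction.} First I would establish submultiplicativity of the partition functions, $Z_{n+m}(f)\le Z_n(f)\,Z_m(f)$. Splitting a Birkhoff sum over an admissible $(n+m)$-cylinder as $\sum_{i=0}^{n+m-1}f(\sigma^i y)=\sum_{i=0}^{n-1}f(\sigma^i y)+\sum_{j=0}^{m-1}f(\sigma^j(\sigma^n y))$ and using $\sup(A+B)\le\sup A+\sup B$, the first block is dominated by the supremum over $[x_1,\ldots,x_n]$ (since $[x_1,\ldots,x_{n+m}]\subseteq[x_1,\ldots,x_n]$) and the second by the supremum over $[x_{n+1},\ldots,x_{n+m}]$. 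Summing over all admissible $(n+m)$-words and discarding the admissibility constraint $a_{x_n x_{n+1}}=1$ factorizes the sum into $Z_n(f)Z_m(f)$. Iterating with $m=1$ gives $Z_n(f)\le Z_1(f)^n$, hence $\mathcal P(f)=\lim_n\tfrac1n\log Z_n(f)\le\log Z_1(f)<\infty$. This direction uses neither the BIP property nor local Hölder continuity.

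\emph{The hard direction}, which I expect to be the main obstacle, requires bounding $Z_n(f)$ below by $Z_1(f)$ up to a multiplicative constant. The natural attempt is to attach to each symbol $d$ a single admissible $n$-word ending in $d$ whose Birkhoff supremum is at least $\sup_{[d]}f$ up to a uniform error; the difficulty is that no fixed prefix can precede every $d$. I would resolve this by routing through the finite BIP set $B=\{c_1,\ldots,c_m\}$: for each $d$ choose $c(d)\in B$ with $a_{c(d)d}=1$, and prepend an admissible word $u$ of length $n-2$ lying entirely in $B$ and ending at a symbol that precedes $c(d)$. Such a $u$ exists because, by BIP, every element of $B$ has a predecessor in $B$, so one may extend backwards inside $B$ indefinitely. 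This yields the admissible $n$-word $[\,u,c(d),d\,]$ whose first $n-1$ coordinates all lie in $B$, and distinct $d$ give distinct words.

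To bound the supremum, I would evaluate the Birkhoff sum at $y_0=(u,c(d),w^\ast)$ with $w^\ast\in[d]$ nearly maximising $f$ on $[d]$, obtaining $\sup_{[u,c(d),d]}\sum_{i=0}^{n-1}f(\sigma^i\,\cdot\,)\ge -(n-1)M_0+\sup_{[d]}f$, where $M_0:=-\min_{c\in B}\inf_{[c]}f$. Here local Hölder continuity is essential: $V_1(f)\le C\theta<\infty$ forces $f$ to have bounded oscillation, hence to be bounded below, on each cylinder $[c]$, so $M_0<\infty$ because $B$ is finite. Summing the resulting inequality $\exp(\sup_{[u,c(d),d]}\cdots)\ge e^{-(n-1)M_0}\exp(\sup_{[d]}f)$ over $d$ gives $Z_n(f)\ge e^{-(n-1)M_0}Z_1(f)$. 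Thus $Z_1(f)=\infty$ forces $Z_n(f)=\infty$ for every $n$, whence $\mathcal P(f)=\lim_n\tfrac1n\log Z_n(f)=\infty$. The delicate points are the backward extension inside $B$ and the finiteness of $M_0$; the rest is bookkeeping.
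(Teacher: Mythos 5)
Your proposal is correct, but the comparison is necessarily lopsided: the paper gives no proof of this lemma at all, citing it as a modified version of Proposition 2.1.9 of Mauldin and Urba\'nski \cite{mauldin2003graph}. What you have written is a correct, self-contained reconstruction of the standard argument behind that citation. The easy direction — submultiplicativity $Z_{n+m}(f)\le Z_n(f)\,Z_m(f)$ by splitting the Birkhoff sum and discarding the admissibility constraint at the splice, hence $Z_n(f)\le Z_1(f)^n$ and $\mathcal{P}(f)\le\log Z_1(f)$ — is exactly the classical one, and you are right that it uses neither BIP nor H\"older continuity. Your hard direction is the same idea as Mauldin--Urba\'nski's but adapted to the hypothesis as this paper states it: they splice letters together using finite irreducibility (a finite set of connecting words), whereas you walk backwards through the BIP set $B$, using the observation that BIP, applied to the elements of $B\subset S$ themselves, gives every element of $B$ a predecessor inside $B$; that observation is valid and is the crux of your construction. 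The two points your argument genuinely hinges on both check out. First, the cylinders $[u,c(d),d]$ are nonempty (any $w^{\ast}\in[d]$ extends the word to a point of $\Sigma$) and distinct for distinct $d$, so summing their contributions indeed yields $Z_n(f)\ge e^{-(n-1)M_0}Z_1(f)$ and $Z_1(f)=\infty$ forces $\mathcal{P}(f)=\infty$. Second, $M_0<\infty$ is legitimate here because the paper's definition of locally H\"older demands $V_n(f)\le C\theta^{n}$ for all $n\in\mathbb{N}$, including $n=1$, so $f$ has finite oscillation on each $1$-cylinder and $B$ is finite; under the weaker convention found elsewhere in the literature, which controls $V_n$ only for $n\ge 2$, this step (and the lemma as stated) would need $V_1(f)<\infty$ as an explicit extra hypothesis, and your proof makes visible exactly where that enters.
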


We use this lemma in the proof of the following proposition.

\begin{proposition}\label{prop:L1}
Let $\phi: \Sigma \rightarrow \mathbb{R}^{-}$ and $\psi: \Sigma \rightarrow \mathbb{R}^{+}$ be non-cohomologous locally H\"older potentials.  Assume that $\alpha_{\lim}<\infty.$  Let $t_{\infty}=\inf\{t \in \mathbb{R}:\mathcal{P}(-t\psi)<\infty\}$ be finite.  Then,  
$$\tilde{t}(q)=-\alpha_{\lim}q+t_{\infty}.$$
\end{proposition}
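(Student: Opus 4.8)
The plan is to reduce the finiteness of $\mathcal{P}(q\phi - t\psi)$ to the convergence of an explicit series indexed by the fixed points $\bar i$, and then to locate the threshold in $t$ by a sandwiching argument built on the definition of $\alpha_{\lim}$. First I would observe that, since $\phi$ and $\psi$ are locally H\"older, so is the linear combination $q\phi - t\psi$ for every fixed $q,t \in \mathbb{R}$; hence Lemma \ref{lemma:MU1} applies and gives
\[
\mathcal{P}(q\phi - t\psi) < \infty \iff Z_{1}(q\phi - t\psi) < \infty,
\]
where $Z_{1}(q\phi - t\psi) = \sum_{i}\exp\sup_{y \in [i]}(q\phi(y) - t\psi(y))$. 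Using local H\"older continuity again, for every $y \in [i]$ one has $|\phi(y) - \phi(\bar i)| \le V_{1}(\phi)$ and $|\psi(y) - \psi(\bar i)| \le V_{1}(\psi)$, both bounded by $C\theta$ uniformly in $i$; consequently the exponent differs from $q\phi(\bar i) - t\psi(\bar i)$ by at most $(|q|+|t|)C\theta$, a constant. Therefore $Z_{1}(q\phi - t\psi)$ converges if and only if the series $\sum_{i}\exp(q\phi(\bar i) - t\psi(\bar i))$ converges.

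Next I would record two consequences of this reduction. Taking $q = 0$ shows that $\sum_{i}\exp(-t\psi(\bar i))$ converges precisely when $t > t_{\infty}$ and diverges when $t < t_{\infty}$; this is the definition of $t_{\infty}$, now phrased as a threshold for the series. For the general term I write $q\phi(\bar i) - t\psi(\bar i) = -\psi(\bar i)\bigl(t - q\,\phi(\bar i)/\psi(\bar i)\bigr)$ and exploit that $\phi(\bar i)/\psi(\bar i) \to -\alpha_{\lim}$: for each $\epsilon > 0$ there is $N$ so that $-\alpha_{\lim} - \epsilon < \phi(\bar i)/\psi(\bar i) < -\alpha_{\lim} + \epsilon$ for all $i > N$. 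Since $\psi(\bar i) > 0$, multiplying through lets me sandwich the exponent, for both signs of $q$, between $-\psi(\bar i)(t + q\alpha_{\lim} + |q|\epsilon)$ and $-\psi(\bar i)(t + q\alpha_{\lim} - |q|\epsilon)$.

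Comparing termwise with the $q = 0$ series then finishes the argument. If $t + q\alpha_{\lim} > t_{\infty}$, I choose $\epsilon$ small enough that $t + q\alpha_{\lim} - |q|\epsilon > t_{\infty}$; the upper sandwich bound provides a convergent majorant, so the series converges. If $t + q\alpha_{\lim} < t_{\infty}$, I choose $\epsilon$ with $t + q\alpha_{\lim} + |q|\epsilon < t_{\infty}$; the lower sandwich bound provides a divergent minorant. Hence the series converges exactly when $t > t_{\infty} - \alpha_{\lim} q$, and taking the infimum over such $t$ yields $\tilde{t}(q) = -\alpha_{\lim} q + t_{\infty}$, with $q = 0$ immediate.

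The main obstacle is that the deviation of $\phi(\bar i)/\psi(\bar i)$ from its limit is multiplied by $\psi(\bar i)$, which may tend to infinity, so the error in the exponent need not be bounded and one cannot simply substitute $-\alpha_{\lim}$ for the ratio. The sandwiching against the two comparison series at the common threshold $t_{\infty}$, followed by $\epsilon \to 0$, is precisely what tames this unbounded error, and the assumed finiteness of $t_{\infty}$ is what guarantees those comparison series have a genuine finite threshold. I note that the non-cohomology hypothesis is not needed here, while $\alpha_{\lim} < \infty$ is exactly what makes $t + q\alpha_{\lim}$ the finite quantity being compared to $t_{\infty}$.
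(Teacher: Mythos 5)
Your proof is correct and follows essentially the same route as the paper's: reduce finiteness of $\mathcal{P}(q\phi-t\psi)$ to finiteness of $Z_{1}(q\phi-t\psi)$ via Lemma \ref{lemma:MU1}, pass to the series over one-periodic points $\bar i$ using the variation bounds, and locate the threshold in $t$ by sandwiching $\phi(\bar i)/\psi(\bar i)$ near $-\alpha_{\lim}$ and comparing against the $t_{\infty}$ threshold. Your execution of the final step (choosing $\varepsilon$ so that $t+q\alpha_{\lim}\mp|q|\varepsilon$ stays on the correct side of $t_{\infty}$ and comparing termwise) is cleaner than the paper's, which informally substitutes the ratio into the exponent and lets $\varepsilon \rightarrow 0$ inside equalities, but the underlying argument is the same.
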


\begin{proof}
Take $N \in \mathbb{N}$ large and an arbitrary $i \in \mathbb{N}.$  Consider the one periodic sequences $\bar{n}=(n,n,...)$ for $n>N$ and $\bar{i}=(i,i,...).$  Given $\phi$ and $\psi$ locally H\"older potentials, we obtain the following estimate if $x \in [i]:$
\begin{equation}\label{eq:Variation}
\phi(\bar{i}) \le \phi(x)+V_{1}(\phi) \text{ and }
\psi(\bar{i}) \le \psi(x)+V_{1}(\psi)
\end{equation}
such that $V_{1}(\phi), V_{1}(\psi) \ge 0.$  Since $\Sigma$ has the BIP property and $\phi$ and $\psi$ are locally H\"older, Lemma~\ref{lemma:MU1} gives us that 
$$Z_{1}(q\phi-t\psi)<\infty \text{ if and only if } \mathcal{P}(q\phi-t\psi)<\infty.$$

We have that for every $x=(x_{1},x_{2},...) \in \Sigma,$
\[Z_{1}(q\phi-t\psi)=\sum_{x_{1}=1}^{\infty}\exp\sup\limits_{x\in[x_{1}]}(q\phi-t\psi)(x).\]

By Equation (\ref{eq:Variation}), we get that for each $\bar{i}=(i,i,...)$ and $x=(i,x_{2},...),$
\[\sup\limits_{x\in[i]}(q\phi-t\psi)(x) \le (q\phi-t\psi)(\bar{i})+V_{1}(q\phi-t\psi).\]

Hence, our calculations for $\bar{i}$ above give us that
$$Z_{1}(q\phi-t\psi) \le \sum_{i=1}^{\infty}\exp((q\phi-t\psi)(\bar{i}))+V_{1}(q\phi-t\psi).$$

Fix an arbitrary $q \in \mathbb{R}.$  Thus, to prove that $Z_{1}(q\phi-t\psi)<\infty$ for some $t \in \mathbb{R},$ it suffices to prove that
\begin{equation}\label{eq:3}
\sum_{i=1}^{\infty}\exp((q\phi-t\psi)(\bar{i}))<\infty.
\end{equation}
Let $\varepsilon>0$ and take our one periodic sequence $\bar{n}$ introduced earlier.  By definition~\ref{def:lim}, we immediately get that
\begin{equation}\label{eq:4}
\frac{\phi(\bar{n})}{-\psi(\bar{n})}-\varepsilon \le \alpha_{\lim} \le \frac{\phi(\bar{n})}{-\psi(\bar{n})}+\varepsilon.
\end{equation}

If $\mathcal{P}(-t\psi)<\infty,$ then $Z_{1}(-t\psi) \le \sum_{i=1}^{\infty}\exp((-t\psi)(i)+V_{1}(-t\psi))<\infty$
by our estimates above and Lemma~\ref{lemma:MU1}.  
Furthermore, $\sum_{n=N}^{\infty}\exp((-t\psi)(\bar{n}))<\infty$ if and only if $\sum_{i=1}^{\infty}\exp((-t\psi)(\bar{i}))<\infty.$  To prove Inequality~\ref{eq:3} for $\bar{i},$ we will show that \[\sum_{n=1}^{\infty}\exp((q\phi-t\psi)(\bar{n}))<\infty\]
for our fixed $q \in \mathbb{R}$ and some $t \in \mathbb{R}.$

When $t=t_{\infty}-q\alpha_{\lim}$ and $\varepsilon \rightarrow 0,$
\begin{eqnarray}
\sum_{n=N}^{\infty}\exp((q\phi-t\psi)(\bar{n})) &=& \sum_{n=N}^{\infty}\exp((q\phi+-(t_{\infty}+q(\frac{\phi(\bar{n})}{\psi(\bar{n})}+\varepsilon)\psi)(\bar{n})) \nonumber\\
&=&\sum_{n=N}^{\infty}\exp((-t_{\infty}-q\varepsilon)\psi(\bar{n}))<\infty \nonumber
\end{eqnarray}
by definition of $t_{\infty}.$  Let $t=t_{\infty}-q\frac{\phi(\bar{n})}{-\psi(\bar{n})}+K$ such that $K \in \mathbb{R}.$
Then, if $K>0$ and $\varepsilon \rightarrow 0,$
$$\sum_{n=N}^{\infty}\exp((q\phi-t\psi)(\bar{n}))=\sum_{n=N}^{\infty}\exp(-(t_{\infty}+K)\psi(\bar{n}))<\infty$$
and if $K<0,$
$$\sum_{n=N}^{\infty}\exp((q\phi-t\psi)(\bar{n}))=\sum_{n=N}^{\infty}\exp(-(t_{\infty}+K)\psi(\bar{n}))=\infty$$
by definition of $t_{\infty}.$ \par
Thus, 
\[\inf\{t \in \mathbb{R}: \sum_{n=1}^{\infty}\exp((q\phi-t\psi)(\bar{n}))<\infty\}=t_{\infty}-q\alpha_{\lim}.\]

Therefore, for each $q \in \mathbb{R},$ 
$$\inf\{t \in \mathbb{R}:\mathcal{P}(q\phi-t\psi)<\infty\}=-\alpha_{\lim}q+t_{\infty}$$
by Equation~\ref{eq:4}.  By definition, it follows that
\[\tilde{t}(q)=-\alpha_{\lim}q+t_{\infty}.\]

It follows that $\tilde{t}(q)$ is a decreasing line because $\alpha_{\lim}>0.$
\end{proof}

The function $\tilde{t}(q)$ is connected to the following set $Q.$ 

\subsection{The Set $Q$ and The Function $\alpha(q)$}\hspace*{\fill} \par
Let $Q=\{q \in \mathbb{R}:T(q)=\tilde{t}(q)\}.$  Since $T(q)$ is strictly convex and $\tilde{t}(q)$ is linear, the following proposition is immediate.

\begin{proposition}\label{prop:Q1}
$Q$ can be a closed interval, half-open infinite interval, a point, or the empty set.
\end{proposition}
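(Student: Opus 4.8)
The plan is to show that each of the four listed shapes is actually attained by some admissible pair $(\phi,\psi)$, i.e.\ to produce, for each shape, a countable Markov shift and potentials for which $Q=\{q:T(q)=\tilde t(q)\}$ realises it. The engine of every construction is a single reduction. Using Proposition~\ref{prop:L1} to substitute $\tilde t(q)=-\alpha_{\lim}q+t_{\infty}$ into the potential $q\phi-\tilde t(q)\psi$ one finds
\[
q\phi-\tilde t(q)\psi=q\,\eta-t_{\infty}\psi,\qquad \eta:=\phi+\alpha_{\lim}\psi,
\]
so the boundary pressure $g(q):=\lim_{t\downarrow\tilde t(q)}\mathcal{P}(q\phi-t\psi)$ is the limiting value along the affine-in-$q$ family $q\mapsto\mathcal{P}(q\eta-t_{\infty}\psi)$. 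Since $t\mapsto\mathcal{P}(q\phi-t\psi)$ is non-increasing (as $\psi>0$) and $\tilde t(q)$ is exactly its finiteness threshold, a short monotonicity argument gives the characterisation $q\in Q\iff g(q)\le 0$. Because pressure is convex in the potential and $q\mapsto q\eta-t_{\infty}\psi$ is affine, $g$ is a convex function of $q$; thus $Q=\{g\le 0\}$ is a sublevel set of a convex function, and the whole problem of realising the four shapes becomes the problem of sculpting the convex curve $g$.

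First I would fix the asymptotics of $g$, which by the variational formula $\mathcal{P}(q\eta-t_{\infty}\psi)=\sup_{\mu}\{q\!\int\!\eta\,\mathrm d\mu-t_{\infty}\!\int\!\psi\,\mathrm d\mu+h(\mu)\}$ are governed by the sign of $\int\eta\,\mathrm d\mu$ over $\sigma$-invariant $\mu$: if some invariant measure has $\int\eta\,\mathrm d\mu>0$ then $g(q)\to+\infty$ as $q\to+\infty$, and symmetrically for $q\to-\infty$. With this dictionary the four cases are produced as follows. For $Q=\emptyset$ I would arrange $\min_{q}g(q)>0$, e.g.\ by choosing $\psi$ with $\mathcal{P}(-t_{\infty}\psi)>0$ at the threshold and $\eta$ whose pressure minimum stays positive. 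For a single point I would make $g$ strictly convex with its minimum value equal to $0$, attained at one $q_{0}$, which is the transversal tangency of the strictly convex $T$ to the line $\tilde t$. For a half-open infinite interval $[q_{0},\infty)$ I would use a one-sided freezing transition: choose the potentials so that $\int\eta\,\mathrm d\mu\le 0$ for every invariant $\mu$, forcing $g$ to be non-increasing and eventually $\le 0$, with $g(q_{0})=0$; the Gauss-map examples constructed later in the paper supply exactly such tail behaviour (and the mirror construction gives $(-\infty,q_{0}]$).

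The main obstacle is the genuinely bounded closed interval $Q=[a,b]$ with $a<b$. Here a strictly convex $T$ can meet the line $\tilde t$ in only one point, so the interval forces $T$ to contain an honest affine segment coinciding with $\tilde t$ on $[a,b]$ while remaining strictly convex on both flanks; equivalently, $g$ must dip strictly below $0$ on $[a,b]$ yet satisfy $g(q)\to+\infty$ as $q\to\pm\infty$. I would realise this by choosing $\phi$ and $\psi$ so that $\int\eta\,\mathrm d\mu$ takes \emph{both} signs as $\mu$ ranges over invariant measures, making $g$ coercive in both directions, while the minimum of $g$ is strictly negative. The delicate points are verifying that $g$ is finite and attains a negative value on the interval, and that off the interval the pressure equation $\mathcal{P}(q\eta-t_{\infty}\psi)=0$ is solved by a strictly convex analytic branch rather than by the affine one; I would control both via Sarig's analyticity of the pressure together with the explicit tail estimates on $\phi(\bar i)$ and $\psi(\bar i)$ already used in the proof of Proposition~\ref{prop:L1}.
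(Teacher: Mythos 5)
You have read the proposition as an existence claim --- that each of the four shapes is actually attained by some admissible pair $(\phi,\psi)$ --- but in the paper it is a classification: for a fixed pair $\phi,\psi$, the set $Q$ \emph{must} take one of the listed forms. That is how it is used immediately afterwards (``By Proposition~\ref{prop:Q1}, $Q=[q_{0},q_{1}]$ for some $q_{0},q_{1}\in\mathbb{R}\cup\{-\infty,\infty\}$'') and in the deduction of Proposition~\ref{prop:AAA1}. The paper's proof is accordingly a one-liner: $T$ is convex (Proposition~\ref{prop:CD1}), $\tilde t(q)=-\alpha_{\lim}q+t_{\infty}$ is affine (Proposition~\ref{prop:L1}), and $T\ge\tilde t$ by comparing the two defining infima, so $Q=\{q: (T-\tilde t)(q)=0\}$ is the zero set of a nonnegative, finite, hence continuous, convex function --- a closed convex subset of $\mathbb{R}$, i.e.\ empty, a point, a compact interval, or a closed half-line. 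The good news is that your proposal contains precisely this argument, embedded as an aside: your characterisation $q\in Q\iff g(q)\le0$, with $g(q)=\lim_{t\downarrow\tilde t(q)}\mathcal{P}(q\phi-t\psi)$, together with convexity of $g$ (an increasing limit, as $s\downarrow0$, of the convex functions $q\mapsto\mathcal{P}(q\eta-(t_{\infty}+s)\psi)$), exhibits $Q$ as a sublevel set of a lower semicontinuous convex function. That gives the classification, and in fact delivers the closedness of $Q$ more carefully than the paper's own phrasing, which loosely calls $T$ ``strictly convex'' even though $T$ must be affine on $Q$ --- a point you correctly flag.

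The gap is that the dominant part of your proposal is aimed at a statement the proposition does not make, and is not carried out even on its own terms. The constructions for $\emptyset$, a point, and a half-line are plausible sketches, but the case you yourself identify as the main obstacle, $Q=[a,b]$ with $a<b$, is left entirely as a plan (``I would realise this by choosing $\phi$ and $\psi$ so that\ldots''), with the two delicate verifications you name --- that $g$ is finite and strictly negative on the interval, and that off the interval the pressure equation is solved by a strictly convex analytic branch --- unaddressed. If realisability were what was required, this would not be a proof; in the paper, realisability is established not here but in the final section of examples, where explicit Gauss-map potentials produce $Q=\emptyset$, $Q=[q_{0},\infty)$, and $Q=[q_{0},q_{1}]$. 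One further caution: your substitution $q\phi-\tilde t(q)\psi=q\eta-t_{\infty}\psi$ presupposes $0<\alpha_{\lim}<\infty$ and $t_{\infty}$ finite, the hypotheses of Proposition~\ref{prop:L1}; the paper's $\alpha_{\lim}=\infty$ example, where $\tilde t$ is not affine and $Q=(-\infty,0)$ is not closed, shows the classification genuinely depends on them, so your argument, like the paper's, should state these hypotheses explicitly.
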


The following proposition immediately follows from Propositions \ref{prop:CD1}, \ref{prop:L1}, and \ref{prop:Q1}.
\begin{proposition}\label{prop:AAA1}
Let $\phi: \Sigma \rightarrow \mathbb{R}^{-}$ and $\psi: \Sigma \rightarrow \mathbb{R}^{+}$ be non-cohomologous locally H\"older potentials.  $T(q)$ has at most two phase transitions. 
\end{proposition}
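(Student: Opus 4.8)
The plan is to locate every candidate non-analytic point of $T(q)$ and then bound their number by examining the geometry of the contact set $Q$. The first step is to record that $T(q) \ge \tilde{t}(q)$ for all $q$: if $t < \tilde{t}(q)$ then $\mathcal{P}(q\phi - t\psi) = \infty > 0$, so such a $t$ can never lie in $\{t : \mathcal{P}(q\phi - t\psi) \le 0\}$, forcing the infimum defining $T(q)$ to be at least $\tilde{t}(q)$. Hence $Q = \{q : T(q) = \tilde{t}(q)\}$ is exactly the set where the convex, decreasing graph of $T$ (Proposition \ref{prop:CD1}) touches the line $\tilde{t}(q) = -\alpha_{\lim}q + t_{\infty}$ (Proposition \ref{prop:L1}) from above.

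Next I would split the $q$-axis into three pieces and establish analyticity on two of them. On the interior of $Q$ we have $T \equiv \tilde{t}$, which is affine and hence real-analytic. On the interior of the complement $\mathbb{R} \setminus Q$ we have $T(q) > \tilde{t}(q)$, so the potential $q\phi - T(q)\psi$ lies in the region where the pressure is finite; since the pressure is continuous and strictly decreasing to $-\infty$ in $t$ (because $\psi > 0$) and is strictly positive just above $\tilde{t}(q)$, the value $T(q)$ is the unique root of $\mathcal{P}(q\phi - t\psi) = 0$. Using the real-analyticity of the map $(q,t) \mapsto \mathcal{P}(q\phi - t\psi)$ on the open set where it is finite (the standard thermodynamic-formalism fact for BIP countable shifts, due to Sarig and Mauldin--Urba\'nski) together with $\partial_{t}\mathcal{P}(q\phi - t\psi) = -\int \psi \,\mathrm{d}\nu < 0$, where $\nu$ is the equilibrium state for $q\phi - t\psi$, the implicit function theorem yields that $T$ is real-analytic on this open set as well.

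Having shown that $T$ is analytic off $\partial Q$, the only possible phase transitions lie in $\partial Q$, and here I invoke Proposition \ref{prop:Q1}: $Q$ is empty, a single point, a closed bounded interval, or a half-open infinite interval. In the first case $\partial Q = \emptyset$; in the single-point and half-open cases $\partial Q$ is one point; and in the closed-interval case $\partial Q$ consists of its two endpoints. In every case $\#\partial Q \le 2$, so $T(q)$ has at most two phase transitions.

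The main obstacle is the analyticity of $T$ on the complement of $Q$: the case analysis on the shape of $Q$ is routine once the non-analytic points are known to be confined to $\partial Q$, but confirming this confinement requires the real-analyticity of the countable-shift pressure on its finite domain and the non-vanishing of its $t$-derivative, which is precisely what powers the implicit function theorem. I would therefore make sure to state cleanly which thermodynamic-formalism input supplies this smoothness before running the otherwise immediate counting argument.
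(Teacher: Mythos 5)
Your proof is correct and takes essentially the same route as the paper: the paper deduces the proposition from the convexity of $T$ (Proposition \ref{prop:CD1}), the linearity of $\tilde{t}$ (Proposition \ref{prop:L1}), and the interval structure of $Q$ (Proposition \ref{prop:Q1}), with the analyticity of $T$ on $Q^{\complement}$ — the ingredient that actually confines phase transitions to $\partial Q$ — supplied later by Sarig's pressure analyticity and the implicit function theorem in Proposition \ref{prop:B1}. Your writeup consolidates into a single self-contained argument what the paper distributes across these propositions, using the same inputs throughout.
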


We remark that Hanus, Mauldin, and Urba\'nski \cite{hanus2002thermodynamic} considered the families of potentials $f^{(i)},$ which is strongly H\"older, and $\log|\phi_{i}^{'}|$, which is defined by a regular, conformal iterated function system $\{\phi_{i}\}$ that satisfy the open set condition (both the terms regular conformal iterated function system and the open set condition are defined in Chapter 4 of \cite{mauldin2003graph}).  Their potentials give that $Q=\emptyset.$  Hence, the multifractal spectrum was analytic in their case.  An important function used in multifractal analysis is $\alpha(q),$ which is connected to $Q.$  By Proposition~\ref{prop:Q1}, $Q=[q_{0}, q_{1}]$ for some $q_{0},q_{1} \in \mathbb{R} \cup \{-\infty,\infty\}.$  

\begin{definition}
Let $Q=[q_{0},q_{1}]$ for some $q_{0},q_{1} \in \mathbb{R}.$  Then, we have a function $\alpha(q)$ such that
$$\alpha(q) =
\left\{
	\begin{array}{ll}
		-T'(q)  & \mbox{if } q \in Q^{\complement} \\
		 \alpha_{\lim}& \mbox{if } q \in (q_{0},q_{1}) \\
        \alpha^{-}=\lim\limits_{q \rightarrow q_{0}^{-}}\alpha(q)& \mbox{if } q=q_{0}>-\infty \\
         \alpha^{+}=\lim\limits_{q \rightarrow q_{1}^{+}}\alpha(q)& \mbox{if } q=q_{1}<\infty. \\
         \end{array}
\right.$$
If $q_{0}=-\infty,$ $\alpha^{-}=\alpha(q_{0})=\alpha_{\lim}$ and if $q_{1}=\infty,$ $\alpha^{+}=\alpha(q_{1})=\alpha_{\lim}.$
\end{definition}

If $Q$ is a singleton, then our preceding definition applies for $q_{0}=q_{1}<\infty.$  In that case, $\alpha^{-}=\alpha^{+}=\alpha(q_{0}).$  Our analysis of $\alpha(q)$ also depends on its extreme values.  We remark that the extreme values of $\alpha(q)$ are the supremum and infimum of the possible local dimension, which we will define later, of any sequence in $\Sigma.$

\begin{definition}
We let
$\alpha_{\inf}:=\inf\limits_{q \in \mathbb{R}}\alpha(q)\text{ and } \alpha_{\sup}:=\sup\limits_{q \in \mathbb{R}}\alpha(q).$
\end{definition}

We prove the following formula connecting $\alpha_{\inf}$ and $\alpha_{\sup}$ to a ratio involving $\phi$ and $\psi.$

\begin{lemma}
Let $\phi:\Sigma \rightarrow \mathbb{R}^{-}$ and $\psi:\Sigma \rightarrow \mathbb{R}^{+}$ be locally H\"older and $\Sigma$ have the BIP property.  Then,
\[\alpha_{\sup}=\sup\limits_{\nu \in M(\Sigma, \sigma)}\frac{\int \phi\, \mathrm{d}\nu}{-\int \psi\, \mathrm{d}\nu} \text{ and } \alpha_{\inf}=\inf\limits_{\nu \in M(\Sigma, \sigma)}\frac{\int \phi\, \mathrm{d}\nu}{-\int \psi\, \mathrm{d}\nu}.\]
\end{lemma}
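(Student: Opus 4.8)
\emph{The plan is to} prove the two identities by splitting each into a pair of opposing inequalities. Write $R(\nu):=\frac{\int\phi\,\mathrm{d}\nu}{-\int\psi\,\mathrm{d}\nu}$ for $\nu\in M(\Sigma,\sigma)$ with finite integrals; this is well defined and positive since $\phi<0<\psi$. I would first show that every ratio is trapped between the extreme values, $\alpha_{\inf}\le R(\nu)\le\alpha_{\sup}$ for all such $\nu$, which gives $\sup_\nu R(\nu)\le\alpha_{\sup}$ and $\inf_\nu R(\nu)\ge\alpha_{\inf}$. I would then show that these two bounds are approached by ratios of a concrete family of equilibrium measures, yielding the reverse inequalities $\alpha_{\sup}\le\sup_\nu R(\nu)$ and $\alpha_{\inf}\ge\inf_\nu R(\nu)$, and hence the lemma.

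For the trapping step I would use only the variational principle together with convexity of $T$. Fix $\nu$ and $q\in\mathbb{R}$. Since $T(q)=\inf\{t:\mathcal{P}(q\phi-t\psi)\le 0\}$ and the pressure is non-increasing and continuous in $t$ where finite, we have $\mathcal{P}(q\phi-T(q)\psi)\le 0$; combining this with the lower bound of the variational principle and $h(\nu)\ge 0$ gives
\[
0\ge \mathcal{P}(q\phi-T(q)\psi)\ge q\!\int\!\phi\,\mathrm{d}\nu-T(q)\!\int\!\psi\,\mathrm{d}\nu .
\]
Dividing by $\int\psi\,\mathrm{d}\nu>0$ rearranges this to $-q\,R(\nu)\le T(q)$ for every $q$. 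Letting $q\to-\infty$ and using that for a convex function $T(q)/|q|\to-\lim_{q\to-\infty}T'(q)=\alpha_{\sup}$ yields $R(\nu)\le\alpha_{\sup}$; letting $q\to+\infty$ and using $T(q)/q\to\lim_{q\to+\infty}T'(q)=-\alpha_{\inf}$ (dividing the inequality by $-q<0$) yields $R(\nu)\ge\alpha_{\inf}$.

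For the reverse inequalities I would realise the extreme values as limits of genuine ratios. For $q\in Q^{\complement}$ the potential $q\phi-T(q)\psi$ has finite pressure and satisfies $\mathcal{P}(q\phi-T(q)\psi)=0$, so by Sarig's existence theorem it admits an equilibrium state $\nu_q$. Differentiating the identity $\mathcal{P}(q\phi-T(q)\psi)=0$ implicitly and using $\frac{\partial}{\partial q}\mathcal{P}=\int\phi\,\mathrm{d}\nu_q$ and $\frac{\partial}{\partial t}\mathcal{P}=-\int\psi\,\mathrm{d}\nu_q$ gives $T'(q)=\frac{\int\phi\,\mathrm{d}\nu_q}{\int\psi\,\mathrm{d}\nu_q}$, hence $\alpha(q)=-T'(q)=R(\nu_q)$. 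Because $\alpha$ is decreasing we have $\alpha_{\sup}=\lim_{q\to-\infty}\alpha(q)$ and $\alpha_{\inf}=\lim_{q\to+\infty}\alpha(q)$, and a neighbourhood of each of $\pm\infty$ lies in $Q^{\complement}$; therefore $\alpha_{\sup}=\lim_{q\to-\infty}R(\nu_q)\le\sup_\nu R(\nu)$ and $\alpha_{\inf}=\lim_{q\to+\infty}R(\nu_q)\ge\inf_\nu R(\nu)$, closing both identities.

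\emph{The hard part will be} the reverse step, which rests on the existence and differentiability theory for equilibrium states on the non-compact shift: I must verify that for $q\in Q^{\complement}$ the potential $q\phi-T(q)\psi$ falls in the regime (locally H\"older, finite Gurevich pressure, positive recurrence) where Sarig's results guarantee an equilibrium state and real-analytic dependence of the pressure on $q$, so that the derivative formula $\frac{\partial}{\partial q}\mathcal{P}=\int\phi\,\mathrm{d}\nu_q$ is valid. A secondary technical point is that $R(\nu)$ must be handled with care for measures with $\int\psi\,\mathrm{d}\nu=\infty$ or $\int\phi\,\mathrm{d}\nu=-\infty$, where the variational inequality above becomes vacuous; I would therefore restrict the supremum and infimum to measures with finite integrals, noting that the equilibrium states $\nu_q$ are of this type, and argue separately that measures of infinite Lyapunov exponent do not alter the extreme values.
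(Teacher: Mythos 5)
Your first half (the trapping inequality $\alpha_{\inf}\le R(\nu)\le\alpha_{\sup}$) is correct, and it is in fact sounder than the corresponding step in the paper: the paper passes from $q\int\phi\,\mathrm{d}\nu/\int\psi\,\mathrm{d}\nu\le T(q)$ to a relation between derivatives by ``taking the derivatives of both sides,'' which is not a legitimate operation on an inequality, whereas your argument---a linear function of $q$ lying below the convex function $T$ must have slope between the asymptotic slopes of $T$ at $\mp\infty$---is the correct way to extract the bound, and it also delivers the directions $\sup_\nu R(\nu)\le\alpha_{\sup}$ and $\inf_\nu R(\nu)\ge\alpha_{\inf}$, which the paper's proof never actually establishes.

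The genuine gap is in your attainment step, in the sentence ``a neighbourhood of each of $\pm\infty$ lies in $Q^{\complement}$.'' By Proposition \ref{prop:Q1}, $Q$ may be a half-open \emph{infinite} interval or all of $\mathbb{R}$, and the paper's own examples realise this: in the one-phase-transition example $Q=[q_{0},\infty)$, and in the $\alpha_{\lim}=\infty$ example $Q=(-\infty,0)$. Suppose $Q=[q_{0},\infty)$. Then $\alpha(q)=\alpha_{\lim}$ on the interior of $Q$, so $\alpha_{\inf}=\alpha_{\lim}$, while your equilibrium states $\nu_{q}$ exist only for $q<q_{0}$ and give ratios $R(\nu_{q})=\alpha(q)\ge\alpha^{-}=\lim_{q\to q_{0}^{-}}\alpha(q)$; at a phase transition one has $\alpha^{-}>\alpha_{\lim}$ (exactly the situation in that example, where $\alpha(q_{0})>\alpha_{\lim}=\alpha_{\inf}$), so your family of measures stays a positive distance away from $\alpha_{\inf}$ and the inequality $\inf_\nu R(\nu)\le\alpha_{\inf}$ is not proved; when $Q=\mathbb{R}$ the family is empty altogether. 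The missing ingredient is precisely what the paper's remark that $\sup_\nu R(\nu)\ge\alpha_{\lim}$ ``by definition of $\alpha_{\lim}$'' is doing: take the $\sigma$-invariant point masses $\delta_{\bar{i}}$ at the fixed points $\bar{i}=(i,i,\dots)$; then $R(\delta_{\bar{i}})=\phi(\bar{i})/(-\psi(\bar{i}))\rightarrow\alpha_{\lim}$ by Definition \ref{def:lim}, so $\alpha_{\lim}$ is always a limit of admissible ratios, and hence lies between $\inf_\nu R(\nu)$ and $\sup_\nu R(\nu)$. Adding this one family of measures to your argument covers all cases where $Q$ is unbounded and closes the gap; your caveat about measures with infinite integrals is then harmless, since the trapping inequality is only needed for measures with finite integrals and both attainment families ($\nu_{q}$, under the paper's standing assumption $\phi,\psi\in\mathcal{L}^{1}(\mu_{q})$, and $\delta_{\bar{i}}$ trivially) are of that kind.
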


\begin{proof}
By the variational principle,
\[\mathcal{P}(q\phi-T(q)\psi)=\sup_{\nu \in M(\sigma,\sigma)}\{q\int \phi\, \mathrm{d}\nu -T(q)\int \psi\, \mathrm{d}\nu + h(\nu)\} \le 0.\]

Because entropy is non-negative, we get that
\[\sup_{\nu \in M(\sigma,\sigma)}\{q\int \phi\, \mathrm{d}\nu -T(q)\int \psi\, \mathrm{d}\nu\} \le 0.\]

Then, for any $\nu \in M(\Sigma, \sigma),$
\[q\int \phi\, \mathrm{d}\nu-T(q)\int \psi\, \mathrm{d}\nu \le 0.\]

With a bit of rearrangement,
\[\frac{q\int \phi\, \mathrm{d}\nu}{\int \psi \mathrm{d}\nu} \le T(q).\]

Taking the derivatives of both sides with respect to $q,$ we get that
\[\frac{\int \phi\, \mathrm{d}\nu}{-\int \psi\, \mathrm{d}\nu} \ge -T'(q).\]

It follows that
\[\sup\limits_{\nu \in M(\Sigma, \sigma)}\frac{\int \phi\, \mathrm{d}\nu}{-\int \psi\, \mathrm{d}\nu} \ge -T'(q).\]

We remark that 
\[\sup\limits_{\nu \in M(\Sigma, \sigma)}\frac{\int \phi\, \mathrm{d}\nu}{-\int \psi\, \mathrm{d}\nu} \ge \alpha_{\lim}\] by definition of $\alpha_{\lim}$ and the approximation of locally H\"older potentials with locally constant functions.

Therefore, 
\[\sup\limits_{\nu \in M(\Sigma, \sigma)}\frac{\int \phi\, \mathrm{d}\nu}{-\int \psi\, \mathrm{d}\nu} = \alpha_{\sup}.\]

Using a similar argument, the result for the infimum of $\alpha(q)$ also follows:
\[\inf\limits_{\nu \in M(\Sigma, \sigma)}\frac{\int \phi\, \mathrm{d}\nu}{-\int \psi\, \mathrm{d}\nu} = \alpha_{\inf}.\]
\end{proof}

Note that $\alpha_{\lim} \in [\alpha_{\inf},\alpha_{\sup}]$ if it exists. Since $Q=[q_{0},q_{1}]$ for some $q_{0},q_{1} \in \mathbb{R},$ we immediately get the following decomposition from the definition of $\alpha(q):$
\[(\alpha_{\inf},\alpha_{\sup})=\{\alpha(q):q\in Q^{\complement}\}  \cup (\alpha^{+},\alpha_{\lim}) \cup \{\alpha_{\lim}\} \cup (\alpha_{\lim},\alpha^{-}) \cup \{\alpha^{-}\} \cup \{\alpha^{+}\}.\]

The function $\alpha(q)$ is always positive for every $q \in \mathbb{R}$ because Iommi \cite{iommi2005multifractal} (Pg 1892, Proposition 4.3) proved that $T(q)$ is a decreasing function of $q.$  We will soon notice that the multifractal spectrum is connected to the functions $T(q)$ and $\alpha(q).$

\subsection{The Multifractal Spectrum}\hspace*{\fill} \par
Before defining the multifractal spectrum we define symbolic dimension and the set $X_{\alpha}^{s}.$  We use the Gibbs measure $\mu$ for $\phi.$

\begin{definition}
A word $x=(x_{1},x_{2},...,x_{m},...) \in \Sigma$ has {\em symbolic dimension} $\alpha$ provided that
\[
d_{\mu}(x):=\lim\limits_{m \rightarrow \infty}\frac{\log\mu([x_{1},x_{2},...,x_{m}])}{\log|[x_{1},x_{2},...,x_{m}]|}=\alpha
\]
such that $\alpha_{\inf} \le \alpha \le \alpha_{\sup}.$
\end{definition}

We now consider sets with local dimension $\alpha$ as follows.

\begin{definition}
For each fixed $\alpha \in [\alpha_{\inf}, \alpha_{\sup}],$ we have the set
$$X_{\alpha}^{s}=\{x \in \Sigma: \lim\limits_{m \rightarrow \infty}\frac{\log\mu([x_{1},x_{2},...,x_{m}])}{\log|[x_{1},x_{2},...,x_{m}]|}=\alpha \}.$$
\end{definition}

Similarly, the local dimension of $x \in \Sigma$ is the limit
\[\lim\limits_{r \rightarrow 0}\frac{\log\mu(B(x,r))}{\log{r}}.\]  We now prove a proposition that the pointwise and symbolic dimensions are equal a.e. $x \in \Sigma.$

\begin{proposition}\label{prop:AAA}
Let $\phi:\Sigma \rightarrow \mathbb{R}^{-}$ be locally H\"older with Gibbs state $\mu$ and let $\mathcal{P}(\phi)<\infty.$
Take $\psi: \Sigma \rightarrow \mathbb{R}^{+}$ as a locally H\"older metric potential.  Assume that $\mu_{q}$ is the Gibbs state for $q\phi-T(q)\psi$ and $\phi,\psi \in \mathcal{L}^{1}(\mu_{q}).$  Then, for $\mu_{q}-$a.e. $x \in \Sigma,$  
\begin{equation}\label{eq:Symbolic}
\frac{\int \phi \mathrm{d}\mu_{q}}{-\int \psi \mathrm{d}\mu_{q}}= \lim\limits_{m \rightarrow \infty}\frac{\log\mu([x_{1},x_{2},...,x_{m}])}{\log|[x_{1},x_{2},...,x_{m}]|}=\alpha(q).
\end{equation}
Furthermore, the pointwise and local dimension are equal a.e. $\Sigma.$
\end{proposition}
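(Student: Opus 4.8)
The plan is to establish the two equalities in Equation (\ref{eq:Symbolic}) separately, then deduce the a.e.\ agreement of pointwise and symbolic dimension from the metric potential property. The measure $\mu_q$ is the Gibbs state for the potential $q\phi - T(q)\psi$, and the key structural fact I would exploit is that $q \in Q$ lies on the boundary/interior determined by $T(q)$, so that $\mathcal{P}(q\phi - T(q)\psi) = 0$ (this is exactly the defining property of the temperature function $T(q)$ as an infimum of $t$ with $\mathcal{P}(q\phi - t\psi) \le 0$, together with continuity of pressure). I would begin by recording this: since $\mu_q$ is an equilibrium/Gibbs state, the variational principle gives $h(\mu_q) + \int(q\phi - T(q)\psi)\,\mathrm{d}\mu_q = \mathcal{P}(q\phi - T(q)\psi) = 0$.

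Let me consider the two halves of the display. **First** I would handle the right-hand equality, $\lim_{m\to\infty}\frac{\log\mu([x_1,\dots,x_m])}{\log|[x_1,\dots,x_m]|} = \alpha(q)$. The strategy is the classical Gibbs-measure computation. Using the Gibbs property of $\mu$ (for $\phi$) in the numerator and the metric potential property of $\psi$ in the denominator, I would rewrite
\[
\frac{\log\mu([x_1,\dots,x_m])}{\log|[x_1,\dots,x_m]|} = \frac{-mP(\phi) + \sum_{j=0}^{m-1}\phi(\sigma^j x) + O(1)}{-\sum_{j=0}^{m-1}\psi(\sigma^j x) + O(1)}.
\]
Dividing numerator and denominator by $m$ and applying the Birkhoff ergodic theorem to the ergodic measure $\mu_q$, the Birkhoff averages $\frac1m\sum\phi(\sigma^j x) \to \int\phi\,\mathrm{d}\mu_q$ and $\frac1m\sum\psi(\sigma^j x)\to\int\psi\,\mathrm{d}\mu_q$ for $\mu_q$-a.e.\ $x$, while the $\frac{P(\phi)}{m}$ and $O(1)/m$ terms vanish. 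I must confirm that the $-mP(\phi)$ term does not survive; this forces the identification $\int\phi\,\mathrm{d}\mu_q = $ the relevant Birkhoff limit and the $P(\phi)$ contribution is absorbed because it is linear in $m$ but the denominator also grows linearly, so the limit is the ratio of integrals only if $P(\phi)$ combines correctly --- this is the delicate bookkeeping point I would check carefully. **Second**, for the left-hand equality $\frac{\int\phi\,\mathrm{d}\mu_q}{-\int\psi\,\mathrm{d}\mu_q} = \alpha(q)$, I would use the $\mathcal{P}(q\phi - T(q)\psi)=0$ relation and differentiate, or directly invoke the characterization $\alpha(q) = -T'(q)$ on $Q^\complement$ together with the earlier lemma expressing $\alpha_{\sup}$ and $\alpha_{\inf}$ as sup/inf of $\frac{\int\phi\,\mathrm{d}\nu}{-\int\psi\,\mathrm{d}\nu}$; the equilibrium measure $\mu_q$ achieves the extremum, giving $-T'(q) = \frac{\int\phi\,\mathrm{d}\mu_q}{-\int\psi\,\mathrm{d}\mu_q}$ by the standard derivative-of-pressure formula.

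**The main obstacle** I anticipate is the passage from symbolic dimension (using cylinder measures and diameters) to the genuine local/pointwise dimension (using balls $B(x,r)$). The subtlety is that cylinders are not balls, and in a countable Markov shift the cylinder $[x_1,\dots,x_m]$ need not coincide with any metric ball of comparable radius; one must show that for $\mu_q$-a.e.\ $x$ the two limits agree. Here the metric potential property is essential: it gives $|[x_1,\dots,x_m]| \asymp \exp(-\sum_{j=0}^{m-1}\psi(\sigma^j x))$, so cylinder diameters shrink geometrically and one can sandwich an arbitrary ball $B(x,r)$ between two cylinders whose levels differ by a bounded amount, provided the variation $V_n(\psi)$ is controlled (which holds by local H\"older continuity). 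I would make this precise by showing that $\log\mu(B(x,r))/\log r$ and $\log\mu([x_1,\dots,x_m])/\log|[x_1,\dots,x_m]|$ differ by a term that vanishes in the limit, using that the ratio of diameters of consecutive nested cylinders is bounded away from $0$ and $1$ because $\psi$ is bounded on each cylinder with uniformly small oscillation. **The plan concludes** by combining the two equalities with this comparison to assert that pointwise dimension equals symbolic dimension equals $\alpha(q)$ for $\mu_q$-a.e.\ $x$, which is the claimed result.
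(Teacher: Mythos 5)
Your overall skeleton is the same as the paper's: use the Gibbs property of $\mu$ and the metric-potential property of $\psi$ to turn the cylinder ratio into a ratio of Birkhoff sums, apply the Birkhoff ergodic theorem for the ergodic measure $\mu_{q}$, identify the resulting ratio of integrals with $\alpha(q)=-T'(q)$ by differentiating $\mathcal{P}(q\phi-T(q)\psi)=0$, and then compare balls with cylinders. The genuine gap is in how you close the ball-versus-cylinder comparison. You assert that ``the ratio of diameters of consecutive nested cylinders is bounded away from $0$ and $1$ because $\psi$ is bounded on each cylinder with uniformly small oscillation.'' On a countable Markov shift this is false: $\psi$ is bounded on each individual $1$-cylinder, but there are countably many of them and $\sup_{[i]}\psi\to\infty$ in the cases of interest (the paper itself remarks that for the Gauss map $\psi(x)\approx 2\log x_{1}$ is unbounded). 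Since $|[x_{1},\dots,x_{m},x_{m+1}]|/|[x_{1},\dots,x_{m}]|\asymp\exp(-\psi(\sigma^{m}x))$ by the metric-potential property, consecutive cylinder diameters can collapse by an arbitrarily large factor; a radius $r$ trapped between them then leaves $\log r$ ranging over an interval of unbounded length, and your sandwich does not close. This is not a technicality fixable by local H\"older control of $V_{n}(\psi)$; it is exactly the difficulty that separates the countable-alphabet setting from the finite one.

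The missing idea, which is the heart of the paper's proof, is to replace the (false) uniform bound by an almost-everywhere statement. The paper introduces
\[
\bar{X}:=\Bigl\{x\in\Sigma:\ \lim_{m\to\infty}\frac{\psi(\sigma^{m}x)}{\sum_{n=0}^{m-1}\psi(\sigma^{n}x)}=0\Bigr\},
\]
shows that for every $x\in\bar{X}$ one has $\log|[x_{1},\dots,x_{m}]|/\log|[x_{1},\dots,x_{m},x_{m+1}]|\to 1$, so that symbolic and local dimension agree on $\bar{X}$, and then shows $\bar{X}$ has full measure: since $\psi\in\mathcal{L}^{1}(\mu_{q})$, the Birkhoff theorem gives $\psi(\sigma^{m}x)/m\to 0$ while $\frac{1}{m}\sum_{n=0}^{m-1}\psi(\sigma^{n}x)\to\int\psi\,\mathrm{d}\mu_{q}>0$ almost everywhere, so the defining ratio of $\bar{X}$ tends to $0$. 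Note that this is precisely where the hypothesis $\psi\in\mathcal{L}^{1}(\mu_{q})$ is consumed; in your outline that hypothesis is never used, which is a sign the argument is incomplete. (The other worry you flagged, the $-m\mathcal{P}(\phi)$ term in the Gibbs estimate, is real but is shared with the paper, which implicitly works with the normalization $\mathcal{P}(\phi)=0$; under that convention your bookkeeping goes through.)
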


\begin{proof}
Consider the set 
\[
\bar{X}:=\{x \in \Sigma: \lim\limits_{m \rightarrow \infty}\frac{\psi(\sigma^{m}(x))}{\sum_{n=0}^{m-1}\psi(\sigma^{m}(x))}=0\}=\{x \in \Sigma: \lim\limits_{m \rightarrow \infty}\frac{\sum_{n=0}^{m}\psi(\sigma^{n}(x))}{\sum_{n=0}^{m-1}\psi(\sigma^{n}(x))}=1\}.
\]
We will prove that $\mu(\bar{X})=1.$

First, we show that for any $x \in \bar{X},$
\[\lim\limits_{m \rightarrow \infty}\frac{\log\mu([x_{1},x_{2},...,x_{m}])}{\log|[x_{1},x_{2},...,x_{m}]|}=\lim\limits_{r \rightarrow 0}\frac{\log\mu(B(x,r))}{\log{r}}.\]

For each $x \in \bar{X},$ we can find $m \in \mathbb{N}$ large and $r>0$ such that the following construction holds.  If $|[x_{1},...,x_{m},x_{m+1}]| \le r \le |[x_{1},...,x_{m}]|,$ we find that $\mu(B(x,r))=\mu([x_{1},x_{2},...,x_{m+1}])$ for some $m \in \mathbb{N}.$  We get the following inequality:
\[\lim\limits_{m \rightarrow \infty}\frac{\log\mu([x_{1},..,x_{m},x_{m+1}])}{\log|[x_{1},...,x_{m}]|} \le \lim\limits_{r \rightarrow 0}\frac{\log\mu(B(x,r))}{\log{r}} \le \lim\limits_{m \rightarrow \infty}\frac{\log\mu([x_{1},....,x_{m},x_{m+1}])}{\log|[x_{1},...,x_{m},x_{m+1}]|}\]

Hence, we will prove that
\[\lim\limits_{m \rightarrow \infty}\frac{\log\mu([x_{1},..,x_{m},x_{m+1}])}{\log|[x_{1},...,x_{m}]|}=\lim\limits_{m \rightarrow \infty}\frac{\log\mu([x_{1},....,x_{m},x_{m+1}])}{\log|[x_{1},...,x_{m},x_{m+1}]|}.\]

These limits are equal if 
\[\lim\limits_{m \rightarrow \infty} \frac{\log|[x_{1},...,x_{m}]|}{\log|[x_{1},...,x_{m},x_{m+1}]|}=1.\]

For each $x \in \bar{X},$ there exists $C>0$ such that
\[\frac{1}{C} \le \frac{|[x_{1},...,x_{m}]|}{\prod_{n=0}^{m-1}(\exp(\psi(\sigma^{n}(x))))^{-1}} \le C \text{ and } \frac{1}{C} \le \frac{|[x_{1},...,x_{m},x_{m+1}]|}{\prod_{n=0}^{m}(\exp(\psi(\sigma^{n}(x))))^{-1}} \le C\]
because $\psi$ is a metric potential.

Then,
\[-\log{C} \le \log\left(\frac{|[x_{1},...,x_{m}]|}{\prod_{n=0}^{m-1}(\exp(\psi(\sigma^{n}(x))))^{-1}}\right) \le \log{C}\]
and
\[-\log{C} \le \log\left(\frac{|[x_{1},...,x_{m},x_{m+1}]|}{\prod_{n=0}^{m}(\exp(\psi(\sigma^{n}(x))))^{-1}}\right) \le \log{C}.\]

Hence, we have that
\[\log\left(\frac{|[x_{1},...,x_{m}]|}{\prod_{n=0}^{m-1}(\exp(\psi(\sigma^{n}(x))))^{-1}}\right)-\log\left(\frac{|[x_{1},...,x_{m+1}]|}{\prod_{n=0}^{m}(\exp(\psi(\sigma^{n}(x))))^{-1}}\right)=0.\]

This gives us that
\[\log\left(\frac{|[x_{1},...,x_{m}]|}{|[x_{1},...,x_{m},x_{m+1}]|}\right)-\log\left(\frac{\prod_{n=0}^{m}\exp(\psi(\sigma^{n}(x)))}{\prod_{n=0}^{m-1}\exp(\psi(\sigma^{n}(x)))}\right)=0,\]
so
\[\lim\limits_{m \rightarrow \infty}\log\left(\frac{|[x_{1},...,x_{m}]|}{|[x_{1},...,x_{m},x_{m+1}]|}\right)-\log\left(\frac{\prod_{n=0}^{m}\exp(\psi(\sigma^{n}(x)))}{\prod_{n=0}^{m-1}\exp(\psi(\sigma^{n}(x)))}\right)=0.\]

We find that
\[\lim\limits_{m \rightarrow \infty}\log\left(\frac{\prod_{n=0}^{m}\exp(\psi(\sigma^{n}(x)))}{\prod_{n=0}^{m-1}\exp(\psi(\sigma^{n}(x)))}\right)=\lim\limits_{m \rightarrow \infty}\sum_{n=0}^{m}\exp(\psi(\sigma^{n}(x)))-\sum_{n=0}^{m-1}\exp(\psi(\sigma^{n}(x)))=0\]
because $x \in \bar{X}.$

It follows that
\[\lim\limits_{m \rightarrow \infty} \frac{\log|[x_{1},...,x_{m}]|}{\log|[x_{1},...,x_{m},x_{m+1}]|}=1.\]

Hence, we find that
\[\lim\limits_{m \rightarrow \infty}\frac{\log\mu([x_{1},..,x_{m},x_{m+1}])}{\log|[x_{1},...,x_{m}]|} = \lim\limits_{r \rightarrow 0}\frac{\log\mu(B(x,r))}{\log{r}} = \lim\limits_{m \rightarrow \infty}\frac{\log\mu([x_{1},....,x_{m},x_{m+1}])}{\log|[x_{1},...,x_{m},x_{m+1}]|}.\]

Therefore, the pointwise and local dimension for each $x \in \bar{X}$ are equal:
\[\lim\limits_{r \rightarrow 0}\frac{\log\mu(B(x,r))}{\log{r}} = \lim\limits_{m \rightarrow \infty}\frac{\log\mu([x_{1},....,x_{m}])}{\log|[x_{1},...,x_{m}]|}.\]

Additionally, we find that $x \in \bar{X}$ satisfy
\[\sum_{k=m}^{\infty}\psi(\sigma^{k}(x))<\infty, \text{ so } \sum_{k=1}^{\infty}\psi(\sigma^{k}(x))<\infty\]
for large $m \in \mathbb{N}.$

Since $\mu$ is Gibbs for $\phi,$ $\psi$ is a metric potential, and the local and pointwise dimension for $x \in \bar{X}$ are equal, we immediately find that
\[\lim\limits_{r \rightarrow 0}\frac{\log\mu(B(x,r))}{\log{r}}=\lim\limits_{m \rightarrow \infty}\frac{\log\mu([x_{1},....,x_{m}])}{\log|[x_{1},...,x_{m}]|}=\lim\limits_{m \rightarrow \infty}\frac{\sum_{j=0}^{m-1}\phi(\sigma^{j}(x))}{-\sum_{j=0}^{m-1}\psi(\sigma^{j}(x))}=\frac{\int \phi \mathrm{d}\mu_{q}}{-\int \psi \mathrm{d}\mu_{q}}\]
for each $x \in \bar{X}$ by the Birkhoff ergodic theorem.  Note that Iommi \cite{iommi2005multifractal} proved this result for a.e. $x \in \Sigma.$

It immediately follows that $\mu(\bar{X})=1.$  Hence, the pointwise and local dimension are equal a.e.
\end{proof}

We give the following result as an alternate characterisation of $\alpha_{\inf}$ and $\alpha_{\sup}$ because Proposition \ref{prop:AAA} can be applied to all $q \in \mathbb{R}.$

\begin{lemma}
Let $\phi:\Sigma \rightarrow \mathbb{R}^{-}$ be locally H\"older with Gibbs state $\mu$ and let $\mathcal{P}(\phi)<\infty.$
Take $\psi: \Sigma \rightarrow \mathbb{R}^{+}$ as a locally H\"older metric potential.  Assume that $\mu_{q}$ is the Gibbs state for $q\phi-T(q)\psi$ and $\phi,\psi \in \mathcal{L}^{1}(\mu_{q}).$  Then, 
\[\alpha_{\inf}=\inf\{d_{\mu}(x): x \in \Sigma\} \text{ and } \alpha_{\sup}=\sup\{d_{\mu}(x): x \in \Sigma\}.\]
\end{lemma}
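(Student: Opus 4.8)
The plan is to prove the two set-identities by establishing inclusions in both directions, using the variational characterisation of $\alpha_{\inf}$ and $\alpha_{\sup}$ from the preceding lemma together with Proposition~\ref{prop:AAA}. I will write out the argument for $\alpha_{\sup}$; the case of $\alpha_{\inf}$ is symmetric. The two quantities to be reconciled are the \emph{variational} value $\alpha_{\sup}=\sup_{\nu}\frac{\int\phi\,\mathrm{d}\nu}{-\int\psi\,\mathrm{d}\nu}$ (ranging over $\sigma$-invariant measures) and the \emph{pointwise} value $\sup\{d_{\mu}(x):x\in\Sigma\}$ (ranging over individual sequences whose symbolic dimension exists).

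For the inequality $\sup\{d_{\mu}(x):x\in\Sigma\}\le\alpha_{\sup}$, I would start from any $x\in\Sigma$ for which $d_{\mu}(x)$ exists. Since $\mu$ is Gibbs for $\phi$ and $\psi$ is a metric potential, the defining ratio unwinds as in the final display of Proposition~\ref{prop:AAA} into $\lim_{m}\frac{\sum_{j=0}^{m-1}\phi(\sigma^{j}(x))}{-\sum_{j=0}^{m-1}\psi(\sigma^{j}(x))}$. The idea is then to push the empirical (occupation) measures $\nu_m:=\frac{1}{m}\sum_{j=0}^{m-1}\delta_{\sigma^{j}(x)}$ into the variational supremum: any subsequential weak-$*$ limit of the $\nu_m$ is $\sigma$-invariant, and $\phi,\psi$ being locally H\"older lets me identify the limiting ratio of Birkhoff sums with $\frac{\int\phi\,\mathrm{d}\nu}{-\int\psi\,\mathrm{d}\nu}$ for that invariant $\nu$. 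Hence $d_{\mu}(x)$ is bounded above by the variational $\alpha_{\sup}$, which by the preceding lemma equals $\alpha_{\sup}$.

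For the reverse inequality $\alpha_{\sup}\le\sup\{d_{\mu}(x):x\in\Sigma\}$, I would exploit Proposition~\ref{prop:AAA} directly: for each $q\in\mathbb{R}$ the Gibbs state $\mu_{q}$ for $q\phi-T(q)\psi$ assigns full measure to the set of $x$ with $d_{\mu}(x)=\alpha(q)=\frac{\int\phi\,\mathrm{d}\mu_q}{-\int\psi\,\mathrm{d}\mu_q}$. In particular such $x$ exist, so $\alpha(q)\in\{d_{\mu}(x):x\in\Sigma\}$ for every $q$, and taking the supremum over $q$ gives $\alpha_{\sup}=\sup_q\alpha(q)\le\sup\{d_{\mu}(x):x\in\Sigma\}$. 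Combining the two inequalities yields the claimed identity for $\alpha_{\sup}$, and the identical argument with infima in place of suprema gives the statement for $\alpha_{\inf}$.

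The main obstacle is the first inequality, specifically the passage from the pointwise limit of Birkhoff-sum ratios to the variational ratio of a single invariant measure. Because $\Sigma$ is a non-compact countable Markov shift, the occupation measures $\nu_m$ need not be tight, so a weak-$*$ limit may lose mass to infinity, and even when a limit $\nu$ exists one must justify $\int\phi\,\mathrm{d}\nu_m\to\int\phi\,\mathrm{d}\nu$ despite $\phi,\psi$ being unbounded. I would handle this by working with the approximating compact invariant subsystems supplied by the earlier pressure-approximation proposition (restricting to finite subshifts $\Sigma_n$ where the empirical measures are tight), controlling the tails via $\phi<0$, $\psi>0$, and the summability implicit in $\mathcal{P}(\phi)<\infty$; the constraint $\alpha_{\lim}<\infty$ (equivalently that the escaping-mass ratio is itself bounded by $\alpha_{\sup}$) ensures that any escape of mass cannot produce a ratio exceeding $\alpha_{\sup}$.
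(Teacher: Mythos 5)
Your second inequality \emph{is} the paper's entire proof: the paper disposes of this lemma in one line, invoking Proposition~\ref{prop:AAA} together with the definitions $\alpha_{\inf}=\inf_{q\in\mathbb{R}}\alpha(q)$ and $\alpha_{\sup}=\sup_{q\in\mathbb{R}}\alpha(q)$ --- each value $\alpha(q)$ is attained $\mu_{q}$-a.e.\ as a symbolic dimension, so $\{\alpha(q):q\in\mathbb{R}\}\subseteq\{d_{\mu}(x):x\in\Sigma\}$ and the inf/sup identities follow. The paper never proves your first inequality (that no individual orbit can have symbolic dimension above $\alpha_{\sup}$ or below $\alpha_{\inf}$): in the paper's conventions that direction is effectively definitional, since the definition of symbolic dimension already restricts the admissible values to $\alpha_{\inf}\le\alpha\le\alpha_{\sup}$. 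Your proposal therefore takes a genuinely different and more honest route, attempting to prove the containment that the paper builds into its set-up, via the preceding variational lemma and empirical measures.

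The catch is that, as written, your hard direction is not closed, and the step you leave open is exactly the crux. On a non-compact countable Markov shift the occupation measures $\nu_{m}$ of an orbit such as $x=(1,2,3,4,\dots)$ need not converge to any invariant measure at all, and your assertion that ``any escape of mass cannot produce a ratio exceeding $\alpha_{\sup}$'' is precisely what has to be proved, not a known consequence of $\alpha_{\lim}<\infty$; likewise $\int\phi\,\mathrm{d}\nu_{m}\to\int\phi\,\mathrm{d}\nu$ under weak-$*$ convergence is not automatic for unbounded $\phi,\psi$. A cleaner way to finish, which stays inside the paper's toolkit and avoids compactness issues entirely, is to replace empirical measures by periodic-orbit measures: compare the Birkhoff sums of $\phi$ and $\psi$ along $(x_{1},\dots,x_{m})$ with the integrals against the invariant measure on the periodic orbit of $(x_{1},\dots,x_{m},x_{1},\dots,x_{m},\dots)$. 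Local H\"older continuity bounds the difference by $\sum_{k\ge1}V_{k}(\phi)$, respectively $\sum_{k\ge1}V_{k}(\psi)$, constants independent of $m$, while the periodic-orbit ratio lies in $[\alpha_{\inf},\alpha_{\sup}]$ by the variational lemma. Since $-\log|[x_{1},\dots,x_{m}]|$ is comparable to $\sum_{j=0}^{m-1}\psi(\sigma^{j}x)\to\infty$, these constants wash out in the limit, giving $d_{\mu}(x)\in[\alpha_{\inf},\alpha_{\sup}]$ for every $x$ at which the limit exists. With that substitution your two-inequality scheme is complete and strictly stronger than the paper's one-line argument.
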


\begin{proof}
The result follows from Proposition \ref{prop:AAA} and the definition of $\alpha_{\inf}$ and $\alpha_{\sup}.$
\end{proof}

As Proposition \ref{prop:AAA} infers, we can consider the set
\[X_{\alpha}:=\{x \in \Sigma: \lim\limits_{r \rightarrow 0}\frac{\log\mu(B(x,r))}{\log{r}}=\alpha\}.\]  Since $X_{\alpha}=X_{\alpha}^{s}$ a.e., we use $X_{\alpha}^{s}$ in our definition of the multifractal spectrum.

\begin{definition}
For each $\alpha \in (\alpha_{\inf}, \alpha_{\sup}),$ the {\em multifractal spectrum} is the function $f_{\mu}(\alpha)$ defined by
\begin{equation}\label{eq:E}
\alpha \mapsto \dim_{H}(X_{\alpha}^{s}).
\end{equation}
\end{definition}

We remark that the multifractal spectrum depends on the measure $\mu.$  Iommi's work \cite{iommi2005multifractal} has a theorem stating that the multifractal spectrum is a Legendre transform.  Hence, we will define the concepts of Fenchel and Legendre transforms.

\begin{definition}
Let $h$ be a convex function.  $(h,g)$ is called a {\em Fenchel pair} if
\[g(p)=\sup\limits_{x}\{px-h(x)\}.\]
Alternatively, we say that $g$ is the {\em Fenchel transform} of $h.$  If $h$ is a convex, twice-differentiable function, then $g$ is called a {\em Legendre transform}.
\end{definition}

We provide Iommi's theorem from Pg 11, Theorem 4.1 of  \cite{iommi2005multifractal}, which proves that the multifractal spectrum is a Legendre transform.  

\begin{theorem}
Let $\phi:\Sigma \rightarrow \mathbb{R}^{-}$ be locally H\"older with Gibbs state $\mu$ and let $\mathcal{P}(\phi)<\infty.$
Take $\psi: \Sigma \rightarrow \mathbb{R}^{+}$ as a locally H\"older metric potential.  The multifractal spectrum $f_{\mu}$ is the Fenchel transform of $T.$
\end{theorem}

We provide the following remark.  Iommi proves that $T(q)$ is a convex function.  If we take $x=q,$ $h(q)=T(q),$ $h'(q)=-\alpha(q),$ and $p=-\alpha,$
\[f_{\mu}(\alpha)=g(p)=\sup\limits_{x \in \mathbb{R}}\{px-h(x)\}=\sup\limits_{q \in \mathbb{R}}\{-q\alpha-T(q)\}=\inf\limits_{q\in\mathbb{R}}\{T(q)+q\alpha\}.\]
Hence, $(T,f_{\mu})$ form a Fenchel pair and $f_{\mu}(\alpha)$ is a Lengendre transform.  Thus, we will use the following form of Iommi's theorem to prove Theorem \ref{thm:M1}.

\begin{theorem}\label{thm:I1}
Let $\phi:\Sigma \rightarrow \mathbb{R}^{-}$ be locally H\"older with Gibbs state $\mu$ and let $\mathcal{P}(\phi)<\infty.$
Take $\psi: \Sigma \rightarrow \mathbb{R}^{+}$ as a locally H\"older metric potential.  For each $\alpha \in (\alpha_{\inf}, \alpha_{\sup}),$
\[f_{\mu}(\alpha)=\inf\limits_{q \in \mathbb{R}}\{T(q)+q\alpha\}=\dim_{H}(X_{\alpha}^{s}).\]
\end{theorem}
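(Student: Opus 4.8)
The plan is to obtain Theorem~\ref{thm:I1} as a direct reformulation of the preceding theorem, namely Iommi's result that $f_{\mu}$ is the Fenchel transform of $T$, combined with the defining equation of the multifractal spectrum. The rightmost equality splits into two assertions. The identity $f_{\mu}(\alpha)=\dim_{H}(X_{\alpha}^{s})$ is immediate from the definition of the multifractal spectrum in Equation~(\ref{eq:E}), so no work is required there. The content to be established is therefore the single equation $f_{\mu}(\alpha)=\inf_{q\in\mathbb{R}}\{T(q)+q\alpha\}$.

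For this, I would begin from the preceding theorem, which furnishes the Fenchel pair $(T,f_{\mu})$; this is legitimate because $T$ is convex by Proposition~\ref{prop:CD1}. I would then unfold the definition of the Fenchel transform with $h=T$ and apply the standard multifractal substitution $p=-\alpha$. Writing $g(p)=\sup_{x}\{px-h(x)\}$ and substituting produces the expression $\sup_{q}\{-q\alpha-T(q)\}$, which I rewrite as $\inf_{q}\{T(q)+q\alpha\}$. The key point to record is that this realises $f_{\mu}$ as the lower envelope of the affine maps $\alpha\mapsto T(q)+q\alpha$ indexed by $q$; since an infimum of affine functions is concave, this is consistent with the concavity of $f_{\mu}$ that will be exploited in the proofs of Theorems~\ref{thm:M1} and~\ref{thm:M2}.

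Since the genuinely hard analytic content, that the Fenchel transform of $T$ computes the Hausdorff dimension of $X_{\alpha}^{s}$, is imported wholesale from Iommi, the only real care needed is bookkeeping. The main point I would check is that the identity is asserted precisely on the open interval $(\alpha_{\inf},\alpha_{\sup})$, and that the sign conventions of the Legendre--Fenchel correspondence are handled so that it is the infimum over $q$, and not a supremum, that equals $\dim_{H}(X_{\alpha}^{s})$. Concretely, I would confirm that for $\alpha$ in this range the infimum defining $f_{\mu}(\alpha)$ is attained at, or approached through, a parameter $q$ satisfying $-T'(q)=\alpha$, which is exactly the relation $\alpha=\alpha(q)$ appearing in the definition of $\alpha(q)$. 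This last observation is what ties the transform back to the thermodynamic quantities $T(q)$ and $\alpha(q)$ and makes the statement usable in the subsequent phase-transition analysis.
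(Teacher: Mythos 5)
Your proposal is correct and follows essentially the same route as the paper: the paper likewise treats Theorem \ref{thm:I1} as a direct restatement of Iommi's Fenchel-transform theorem, unfolding $g(p)=\sup_{x}\{px-h(x)\}$ with $h=T$ and $p=-\alpha$ to arrive at $f_{\mu}(\alpha)=\inf_{q\in\mathbb{R}}\{T(q)+q\alpha\}$, while $f_{\mu}(\alpha)=\dim_{H}(X_{\alpha}^{s})$ holds by the definition in Equation (\ref{eq:E}). The only caveat, which you share with the paper's own remark, is that rewriting $\sup_{q}\{-q\alpha-T(q)\}$ as $\inf_{q}\{T(q)+q\alpha\}$ is a change of sign convention rather than an identity (literally $\sup_{q}\{-q\alpha-T(q)\}=-\inf_{q}\{T(q)+q\alpha\}$), so the clean justification is that Iommi's theorem asserts the infimum formula in the multifractal convention directly.
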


The following proposition is immediate.
\begin{proposition}\label{prop:AA1}
Let $\phi:\Sigma \rightarrow \mathbb{R}^{-}$ be locally H\"older with Gibbs state $\mu$ and let $\mathcal{P}(\phi)<\infty.$
Take $\psi: \Sigma \rightarrow \mathbb{R}^{+}$ as a locally H\"older metric potential.  Furthermore, assume that $\phi$ and $\psi$ are non-cohomologous.  If $T(q)$ is analytic over $\mathbb{R},$ then $f_{\mu}(\alpha(q))$ is analytic over $(\alpha_{\inf}, \alpha_{\sup}).$
\end{proposition}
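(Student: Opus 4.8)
The plan is to exploit the duality established in Theorem~\ref{thm:I1}, which identifies the multifractal spectrum as the Fenchel transform of the temperature function:
\[f_{\mu}(\alpha)=\inf_{q\in\mathbb{R}}\{T(q)+q\alpha\}\qquad\text{for }\alpha\in(\alpha_{\inf},\alpha_{\sup}).\]
Since the Legendre transform of a smooth, strictly convex function is again smooth on the interior of its effective domain, the entire argument reduces to verifying that $T$ is strictly convex and that the map sending $\alpha$ to the optimising $q$ is analytic. Proposition~\ref{prop:CD1} already supplies convexity, the hypothesis supplies analyticity of $T$ over $\mathbb{R}$, and the non-cohomologous assumption on $\phi$ and $\psi$ is precisely what upgrades convexity to \emph{strict} convexity, so that $T''(q)>0$ for every $q$.

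First I would record that strict convexity makes $\alpha(q)=-T'(q)$ a strictly decreasing analytic function on $\mathbb{R}$ (analyticity of $T$ passes to $T'$), whose image is exactly the open interval $(\alpha_{\inf},\alpha_{\sup})$, by the definition of $\alpha_{\inf}$ and $\alpha_{\sup}$ as the infimum and supremum of $\alpha(q)$. Because $\tfrac{d\alpha}{dq}=-T''(q)\neq 0$, the analytic inverse function theorem furnishes an analytic inverse $\alpha\mapsto q(\alpha)$ on $(\alpha_{\inf},\alpha_{\sup})$. Next I would locate the minimiser in the Fenchel transform: differentiating $q\mapsto T(q)+q\alpha$ and setting the result to zero gives $T'(q)+\alpha=0$, i.e.\ $\alpha=-T'(q)=\alpha(q)$, and strict convexity guarantees this critical point is the unique minimiser, so $q=q(\alpha)$ and
\[f_{\mu}(\alpha)=T(q(\alpha))+q(\alpha)\,\alpha.\]
The right-hand side is assembled from the analytic functions $T$, $q(\alpha)$, and $\alpha\mapsto\alpha$ by composition, multiplication, and addition, so $f_{\mu}$ is analytic on $(\alpha_{\inf},\alpha_{\sup})$; equivalently, a short computation using $T'(q(\alpha))=-\alpha$ yields $f_{\mu}'(\alpha)=q(\alpha)$, which is analytic by the previous step.

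The step I expect to be the main obstacle is justifying strict convexity of $T$ from the non-cohomologous hypothesis, since this is exactly what excludes a flat piece in $T$ (equivalently a jump in $\alpha(q)$, or an interval on which $T$ coincides with the linear $\tilde{t}(q)$) that would destroy the bijection between $q$ and $\alpha$. Once strict convexity is secured, the remaining manipulations are the standard Legendre-duality calculus together with the analytic inverse function theorem, which is why the statement can be regarded as immediate. Some care is also needed to confirm that the image of $\alpha(q)$ is genuinely the \emph{open} interval $(\alpha_{\inf},\alpha_{\sup})$, matching the domain on which analyticity is asserted.
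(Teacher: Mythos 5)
Your proposal is correct and follows essentially the same route as the paper: the paper treats Proposition~\ref{prop:AA1} as immediate from Theorem~\ref{thm:I1}, and the Legendre-duality computation you spell out (minimiser located at $\alpha=-T'(q)$, inversion of $\alpha(q)$ via $\alpha'(q)=-T''(q)\neq 0$, and analyticity of $f_{\mu}(\alpha)=T(q(\alpha))+q(\alpha)\,\alpha$) is exactly the argument the paper itself carries out in the proof of Proposition~\ref{prop:A1}. The only caveat, shared equally by the paper, is the passage from strict convexity to $T''(q)>0$ everywhere, which strictly speaking requires the non-vanishing-variance consequence of the non-cohomology assumption rather than strict convexity alone (a strictly convex analytic function such as $q^{4}$ can have a vanishing second derivative, and its Legendre transform fails to be analytic at the corresponding point).
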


Our paper is on the multifractal spectrum's phase transitions; however, we gave the preceding proposition for completeness.  In order to analyse the phase transitions of the multifractal spectrum, we must analyse the phase transitions of $T(q).$  To do this, we must give criteria for the existence of special measures for our potential.

\subsection{Measures for Our Potentials}\hspace*{\fill} \par
In this section, we provide criteria for the existence and uniqueness of special measures for $\phi$ and $q\phi-T(q)\psi.$  We have the following (modified) result by Sarig \cite{sarig2003existence} (Pg 2, Theorem 1) and Mauldin and Urba\'nski \cite{mauldin2003graph} (Pg 14, Theorem 2.2.4).
\begin{theorem}\label{thm:S2}
Let $(\Sigma,\sigma)$ be topologically mixing and $\gamma: \Sigma \rightarrow \mathbb{R}$ is locally H\"older.  Then, $\gamma$ has a unique invariant Gibbs state if and only if the transition matrix $A$ has the BIP property and $\mathcal{P}(\gamma)<\infty.$ 
\end{theorem}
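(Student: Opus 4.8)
The plan is to prove both implications through the machinery of the Ruelle transfer operator
\[
(L_\gamma f)(x) = \sum_{\sigma(y)=x} e^{\gamma(y)} f(y),
\]
acting on the space of bounded, locally H\"older functions on $\Sigma$. The forward implication is comparatively routine, whereas the reverse implication—the actual construction of the Gibbs state—rests on a Ruelle--Perron--Frobenius (RPF) theorem for $L_\gamma$, and this is where the BIP hypothesis does its essential work.

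For the necessity direction, I would assume that a unique invariant Gibbs state $\mu$ exists with Gibbs constant $P$. Applying the Gibbs inequality to one-symbol cylinders gives $\mu([x_1]) \asymp e^{-P}\exp(\sup_{[x_1]}\gamma)$, so summing over $x_1 \in S$ together with $\sum_{x_1}\mu([x_1])=1$ forces $Z_1(\gamma) = \sum_{x_1}\exp(\sup_{[x_1]}\gamma) \asymp e^{P} < \infty$; Lemma~\ref{lemma:MU1} then yields $\mathcal{P}(\gamma)<\infty$, and simultaneously identifies $P=\mathcal{P}(\gamma)$. For the necessity of BIP I would argue contrapositively: if BIP fails, then no finite collection of symbols dominates all images and preimages, and one can exhibit a sequence of admissible cylinders along which the two-sided Gibbs bounds cannot hold with a single constant $M$, contradicting the Gibbs property.

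For the sufficiency direction, assume BIP and $\mathcal{P}(\gamma)<\infty$. I would first verify that $L_\gamma$ is a bounded operator on bounded locally H\"older functions, using $Z_1(\gamma)<\infty$ (again via Lemma~\ref{lemma:MU1}) to control the sum over preimages and local H\"older continuity to control oscillation. The heart of the argument is the RPF theorem: under BIP, topological mixing, and finite pressure, $L_\gamma$ admits a simple leading eigenvalue $\lambda = e^{\mathcal{P}_G(\gamma)}$, a strictly positive eigenfunction $h$ bounded away from $0$ and $\infty$, and a conformal eigenmeasure $\nu$ for the dual $L_\gamma^{*}$ with the same eigenvalue. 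By the earlier proposition identifying $\mathcal{P}_G(\gamma)=\mathcal{P}(\gamma)$, the normalized measure $d\mu = h\,d\nu$ is $\sigma$-invariant, and a direct comparison of $\mu([x_1,\dots,x_m])$ with $\exp\!\big(-m\mathcal{P}(\gamma)+\sum_{j=0}^{m-1}\gamma(\sigma^j x)\big)$, using boundedness of $h$ and conformality of $\nu$, establishes the Gibbs property. Uniqueness follows from topological mixing: any two invariant Gibbs states for $\gamma$ are mutually absolutely continuous with $\nu$, hence with each other, and ergodicity forces them to coincide.

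The main obstacle is the RPF theorem itself on the non-compact shift, specifically the spectral gap. In the finite-state setting this is classical Perron--Frobenius theory on a compact space, but for a countable alphabet the transfer operator is in general neither compact nor obviously quasi-compact. The role of BIP is precisely to supply a uniform mixing estimate—bounded distortion together with a finite ``base'' of symbols reachable from and to every state—that yields a Lasota--Yorke type inequality and hence quasi-compactness of $L_\gamma$ on the H\"older space. Controlling the tail contributions of the countably many inverse branches uniformly, so that the positive eigenfunction $h$ stays bounded away from $0$ and $\infty$, is the delicate step that the finite-state proof never encounters.
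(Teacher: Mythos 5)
A preliminary remark: the paper never proves this statement --- it is imported as a ``modified result'' of Sarig \cite{sarig2003existence} (Theorem 1) and Mauldin--Urba\'nski \cite{mauldin2003graph} (Theorem 2.2.4), so there is no internal proof to compare against, and your proposal has to be judged as a reconstruction of the argument in those references. In outline you do follow their transfer-operator route, but both load-bearing steps are left unproven, so there are genuine gaps.

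The first gap is the necessity of BIP. Your contrapositive sketch (``if BIP fails \dots one can exhibit a sequence of admissible cylinders along which the two-sided Gibbs bounds cannot hold'') restates the goal rather than proving it: nothing explains how failure of BIP produces such cylinders. Sarig's actual argument runs forward, not contrapositively. The Gibbs property plus bounded distortion (local H\"older continuity summed along orbits) gives quasi-multiplicativity $\mu([ab]) \asymp \mu([a])\,\mu([b])$, with uniform constant $C$, for every admissible concatenation $ab$; combining this with $\sigma$-invariance, which gives $\sum_{a:\,a\to b}\mu([ab])=\mu([b])$ and $\sum_{b:\,a\to b}\mu([ab])=\mu([a])$, one obtains $\sum_{a:\,a\to b}\mu([a])\ge 1/C$ and $\sum_{b:\,a\to b}\mu([b])\ge 1/C$ for every state, and then any finite set of symbols carrying all but $1/(2C)$ of the total $\mu$-mass of the one-cylinders witnesses BIP. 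That idea is absent from your proposal. A related flaw: you invoke Lemma \ref{lemma:MU1} to pass from $Z_1(\gamma)<\infty$ to $\mathcal{P}(\gamma)<\infty$, but that lemma assumes BIP, which at that point of your argument you have not yet established; the circularity is easily repaired (the Gibbs property directly bounds $Z_n(\gamma)\le M e^{nP}$ for every $n$, so $\mathcal{P}(\gamma)\le P$), but as written the order of deductions is wrong.

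The second gap is the sufficiency direction, where everything is delegated to an RPF theorem providing a spectral gap, an eigenfunction $h$ bounded away from $0$ and $\infty$, and a conformal eigenmeasure $\nu$ --- you acknowledge this is ``the main obstacle,'' but under exactly the hypotheses BIP and finite pressure that theorem essentially \emph{is} the content of Sarig's result, so assuming it leaves only bookkeeping. A complete proof would have to either establish quasi-compactness (your Lasota--Yorke claim is asserted, not proved, and is delicate since $L_\gamma$ is not compact on a countable-alphabet shift) or follow the route of \cite{sarig2003existence}: use BIP and $Z_1(\gamma)<\infty$ to prove positive recurrence of $\gamma$, apply the generalized RPF theorem to obtain $h$ and $\nu$, and then use BIP a second time to show $h$ is bounded above and below, which is precisely what converts conformality of $\nu$ into the two-sided Gibbs estimate. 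Your skeleton is the right one, but neither of the two steps that make the theorem true has been carried out.
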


Thus, by Sarig, Mauldin, and Urba\'nski, $\phi$ has a corresponding unique Gibbs measure $\mu.$  We now consider the family of potentials $q\phi-T(q)\psi.$

\begin{theorem}\label{thm:MR1}
Assume that $\Sigma$ satisfies the BIP property.  Let $\phi:\Sigma \rightarrow \mathbb{R}^{-}$ be locally H\"older with Gibbs state $\mu$ and let $\mathcal{P}(\phi)<\infty.$
Take $\psi: \Sigma \rightarrow \mathbb{R}^{+}$ as a locally H\"older metric potential.  For each $q \in Q^{\complement},$ there exists a unique, ergodic Gibbs state $\mu_{q}$ for $q\phi-T(q)\psi.$  Furthermore, $\mu_{q}(X_{\alpha(q)}^{s})=1.$  If we also have that $\int q\phi-T(q)\psi\, \mathrm{d}\mu_{q}>-\infty,$ $\mu_{q}$ is the unique equilibrium state for $q\phi-T(q)\psi.$
\end{theorem}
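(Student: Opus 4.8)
The plan is to verify the hypotheses of Theorem \ref{thm:S2} for the single potential $\gamma:=q\phi-T(q)\psi$ and then to read off the remaining assertions from results already established. Since $\phi$ and $\psi$ are locally H\"older and $q,T(q)\in\mathbb{R}$ are constants, the combination $q\phi-T(q)\psi$ is again locally H\"older, and $\Sigma$ has the BIP property by hypothesis. The one genuinely nontrivial point needed for Theorem \ref{thm:S2} is therefore the finiteness of the pressure, $\mathcal{P}(q\phi-T(q)\psi)<\infty$, for every $q\in Q^{\complement}$; once that is in hand, Theorem \ref{thm:S2} furnishes a unique invariant Gibbs state $\mu_{q}$ for $q\phi-T(q)\psi$.

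First I would establish finiteness of the pressure. Comparing the definitions $T(q)=\inf\{t:\mathcal{P}(q\phi-t\psi)\le 0\}$ and $\tilde{t}(q)=\inf\{t:\mathcal{P}(q\phi-t\psi)<\infty\}$, and noting that any $t$ achieving $\mathcal{P}(q\phi-t\psi)\le 0$ in particular has finite pressure, one always has $T(q)\ge\tilde{t}(q)$. By the definition of $Q$, for $q\in Q^{\complement}$ this inequality is strict, i.e.\ $T(q)>\tilde{t}(q)$. Since $\psi>0$, the potential $q\phi-t\psi$ decreases pointwise as $t$ grows, so the pressure is monotone in $t$; combined with the defining infimum for $\tilde{t}(q)$, every $t>\tilde{t}(q)$ satisfies $\mathcal{P}(q\phi-t\psi)<\infty$. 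In particular $\mathcal{P}(q\phi-T(q)\psi)<\infty$, supplying the missing hypothesis and yielding the unique Gibbs state $\mu_{q}$. Ergodicity then follows in the standard way: the Gibbs property together with topological mixing and BIP forces the unique Gibbs state to be mixing (this is part of Sarig's Ruelle--Perron--Frobenius theory), and in particular ergodic; equivalently, any nontrivial invariant splitting would contradict the uniqueness just obtained.

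Next I would obtain the concentration statement $\mu_{q}(X_{\alpha(q)}^{s})=1$, which is precisely the content of Proposition \ref{prop:AAA} applied to the ergodic Gibbs state $\mu_{q}$. That proposition gives, for $\mu_{q}$-a.e.\ $x\in\Sigma$,
\[
d_{\mu}(x)=\lim_{m\to\infty}\frac{\log\mu([x_{1},\dots,x_{m}])}{\log|[x_{1},\dots,x_{m}]|}=\frac{\int\phi\,\mathrm{d}\mu_{q}}{-\int\psi\,\mathrm{d}\mu_{q}}=\alpha(q),
\]
so that $\mu_{q}$-almost every $x$ lies in $X_{\alpha(q)}^{s}$. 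The integrability $\phi,\psi\in\mathcal{L}^{1}(\mu_{q})$ required to run Birkhoff's theorem inside Proposition \ref{prop:AAA} is obtained from the sign conditions $\phi<0$, $\psi>0$: if $\int(q\phi-T(q)\psi)\,\mathrm{d}\mu_{q}>-\infty$, then in each sign regime for $q$ and $T(q)$ the two one-sided integrals are separately controlled, forcing both $\phi$ and $\psi$ to be integrable.

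Finally, the equilibrium-state assertion follows from the same results of Sarig and of Mauldin and Urba\'nski: when $\int(q\phi-T(q)\psi)\,\mathrm{d}\mu_{q}>-\infty$, the invariant Gibbs state of a locally H\"older potential of finite pressure on a topologically mixing BIP shift is its \emph{unique} equilibrium state, i.e.\ the unique $\sigma$-invariant $\nu$ attaining $h(\nu)+\int(q\phi-T(q)\psi)\,\mathrm{d}\nu=\mathcal{P}(q\phi-T(q)\psi)$; applying this to $\nu=\mu_{q}$ gives the claim. I expect the finiteness-of-pressure step to be the main obstacle, since it is the only place where the hypothesis $q\in Q^{\complement}$ is essential, and it requires arguing carefully that the strict separation $T(q)>\tilde{t}(q)$ places the parameter $T(q)$ inside the region where $\mathcal{P}(q\phi-t\psi)$ is finite; the integrability bookkeeping needed for Proposition \ref{prop:AAA} and for the equilibrium-state step is routine by comparison.
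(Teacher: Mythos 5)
Your proposal is correct and follows the same skeleton as the paper's proof: apply Theorem \ref{thm:S2} to $q\phi-T(q)\psi$ to get the unique ergodic Gibbs state $\mu_{q}$, then establish $\mu_{q}(X_{\alpha(q)}^{s})=1$, then invoke the Gibbs-implies-equilibrium result under the integrability hypothesis. The differences are in the details, and on the two main points your version is the more complete one. First, the paper's proof invokes Theorem \ref{thm:S2} without ever checking its hypothesis $\mathcal{P}(q\phi-T(q)\psi)<\infty$; your argument --- $T(q)\ge\tilde{t}(q)$ always, strict inequality on $Q^{\complement}$, and monotonicity of $t\mapsto\mathcal{P}(q\phi-t\psi)$ (from $\psi>0$) propagating finiteness from just above $\tilde{t}(q)$ up to $t=T(q)$ --- is precisely the missing verification, and it is the only place the hypothesis $q\in Q^{\complement}$ actually enters. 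Second, for the concentration statement the paper offers only the remark that $\mu_{q}$ can be normalised on $X_{\alpha(q)}^{s}$, which is not an argument; your derivation from Proposition \ref{prop:AAA} (Birkhoff applied to the ergodic state $\mu_{q}$) is the honest route. One caveat: your claim that $\int (q\phi-T(q)\psi)\,\mathrm{d}\mu_{q}>-\infty$ forces $\phi,\psi\in\mathcal{L}^{1}(\mu_{q})$ in every sign regime is only sound when $q\phi$ and $-T(q)\psi$ are both nonpositive, i.e.\ when $q>0$ and $T(q)\ge 0$; since $T$ is decreasing, $T(q)<0$ does occur for large $q$, and there the two terms have opposite signs and no separate control follows (the integral may not even be well defined). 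The paper sidesteps exactly this issue by taking $-\phi,\psi\in\mathcal{L}^{1}(\mu_{q})$ as a standing assumption for $q\in Q^{\complement}$, and you should do the same (or restrict your sign argument to the regime where it is valid) so that Proposition \ref{prop:AAA} applies as needed; note also that this integrability is what the unconditional statement $\mu_{q}(X_{\alpha(q)}^{s})=1$ implicitly relies on in the paper as well.
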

\begin{proof}
We have that $\Sigma$ is topologically mixing and BIP, the transition matrix $A$ satisfies the BIP property, and $q\phi-T(q)\psi$ is assumed to be locally H\"older.  Hence, a unique Gibbs state for $q\phi-T(q)\psi$ exists by Theorem \ref{thm:S2}.  Since $\Sigma$ is BIP, $\phi$ and $\psi$ are locally H\"older, and $q\phi-T(q)\psi$ has a Gibbs state, we have the unique, invariant, and ergodic Gibbs state $\mu_{q}$ for $q\phi-T(q)\psi$ by Theorem \ref{thm:S2}.  It follows that $\mu_{q}(X_{\alpha(q)}^{s})=1$ because $\mu_{q}$ can be normalised on $X_{\alpha(q)}^{s}.$  Thus, if $-q\phi+T(q)\psi$ is integrable, we have that $\mu_{q}$ is the unique equilibrium state for $q\phi-T(q)\psi.$
\end{proof}

An important assumption for our potentials $-\phi,\psi$ is that they are in $\mathcal{L}^{1}(\mu_{q})$ for each $q \in Q^{\complement}.$  Before we provide a corollary complementing Theorem \ref{thm:MR1}, we give a definition from Pg 6 of Sarig \cite{sarig2003existence}.  This definition will help us form a corollary about the function $\alpha(q)$ with respect to $\mu_{q}.$

\begin{definition}
Let $\phi:\Sigma \rightarrow \mathbb{R}^{-}$ be a locally H\"older potential such that $\mathcal{P}(\phi)<\infty.$  Denote $Dir(\phi)$ as the collection of all $\psi: \Sigma \rightarrow \mathbb{R}^{+}$ such that there exists $C_{\psi}>0, r \in (0,1)$ and $\varepsilon>0$ such that
\begin{enumerate}
\item $V_{n}(\psi)<C_{\psi}r^{n} \text{ for all } n \ge 1$ 
\item $\mathcal{P}(q\phi-t\psi)<\infty \text{ for all } t \in (T(q)-\varepsilon, T(q)+\varepsilon).$
\end{enumerate}
\end{definition}

Since $\psi$ is locally H\"older and there exist $\tilde{t}(q)<t<T(q)$ such that $\mathcal{P}(q\phi-t\psi)<\infty$ for $q \in Q^{\complement},$ it follows that $\psi \in Dir(\phi).$  Now, we provide a proposition (which is a modified version of Corollary 4 on Pg 6 of Sarig \cite{sarig2003existence}). 

\begin{proposition}\label{cor:AN1}
If $\Sigma$ satisfies the BIP property, $\mathcal{P}(\phi)<\infty,$ and there exist $B>0$ and $\theta \in (0,1)$ such that $V_{n}(\phi)<B\theta^{n}$ for all $n \ge 1,$ then for every $\psi \in Dir(\phi)$ and fixed $q \in Q^{\complement},$ there exists $\varepsilon_{0}>0$ for which $t \mapsto \mathcal{P}(q\phi-t\psi)$ is real analytic on $(T(q)-\varepsilon_{0}, T(q)+\varepsilon_{0}).$
\end{proposition}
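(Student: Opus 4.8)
The plan is to reduce the statement to Sarig's analyticity theorem (his Corollary 4) by checking that its hypotheses hold in our setting, and to recall the transfer-operator mechanism that underlies it. First I would confirm that $t=T(q)$ sits strictly inside the region where the pressure is finite. Since $q \in Q^{\complement}$ means $T(q) \neq \tilde{t}(q)$, and since $\mathcal{P}(q\phi-t\psi)\ge 0$ is needed before $\mathcal{P}(q\phi-t\psi)\le 0$ can fail (so that $T(q)\ge \tilde{t}(q)$ always), we in fact have $T(q) > \tilde{t}(q)$. Because the pressure is monotone decreasing in $t$ and finite on $(\tilde{t}(q),\infty)$, the point $T(q)$ lies in the interior of the finite-pressure region; condition (2) in the definition of $Dir(\phi)$ records this as a two-sided interval $(T(q)-\varepsilon, T(q)+\varepsilon)$ on which the pressure is finite. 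The assumption $V_{n}(\phi)<B\theta^{n}$, together with $V_{n}(\psi)<C_{\psi}r^{n}$ from $\psi \in Dir(\phi)$, guarantees that $q\phi-t\psi$ is locally H\"older with summable variations controlled uniformly as $t$ ranges over this interval.

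Next I would invoke the generalised Ruelle--Perron--Frobenius machinery. For each $t$ in the interval, consider the transfer operator $L_{q\phi-t\psi}$ acting on the Banach space of locally H\"older observables with the appropriate weighted norm. Because $\Sigma$ is topologically mixing and BIP and the pressure is finite, Sarig's spectral theorem furnishes a simple, isolated leading eigenvalue $\lambda(t)=\exp(\mathcal{P}(q\phi-t\psi))$, separated from the rest of the spectrum by a gap. The crucial analytic input is that $t \mapsto L_{q\phi-t\psi}$ is an analytic family of bounded operators: the potential is affine in $t$ and $\psi$ has summable variations, so the exponential weights $\exp(q\phi-t\psi)$ yield an operator-norm-convergent power series in $t$, while finiteness of the pressure throughout the interval keeps each operator bounded on the Banach space.

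Finally, I would apply Kato's analytic perturbation theory: a simple, isolated eigenvalue of an analytic family of operators depends analytically on the parameter. Hence $t \mapsto \lambda(t)$ is real analytic on some $(T(q)-\varepsilon_{0}, T(q)+\varepsilon_{0})$ with $0<\varepsilon_{0}\le\varepsilon$, and since $\lambda(t)>0$, the pressure $\mathcal{P}(q\phi-t\psi)=\log\lambda(t)$ is real analytic there.

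The main obstacle is establishing that the spectral gap persists uniformly across a neighbourhood of $T(q)$ and that the operator family is genuinely analytic in the operator-norm topology; both rest on the variation bounds $V_{n}(\phi)<B\theta^{n}$ and $V_{n}(\psi)<C_{\psi}r^{n}$ controlling the transfer operator and on the finite-pressure condition ensuring that positive recurrence does not degenerate. Since this is precisely the content secured by Sarig's Corollary 4, the remaining work reduces to verifying its hypotheses, which the assumptions $V_{n}(\phi)<B\theta^{n}$, $\mathcal{P}(\phi)<\infty$, the BIP property, and $\psi \in Dir(\phi)$ supply.
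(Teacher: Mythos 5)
Your proposal is correct and takes essentially the same route as the paper: the paper offers no independent proof of this proposition, presenting it as a modified version of Corollary 4 on Pg 6 of Sarig \cite{sarig2003existence}, and your argument is exactly that reduction --- observing that $q \in Q^{\complement}$ forces $T(q) > \tilde{t}(q)$, hence a two-sided interval around $T(q)$ on which $\mathcal{P}(q\phi-t\psi)<\infty$, so that Sarig's hypotheses (BIP, locally H\"older variations, finite pressure, $\psi \in Dir(\phi)$) are met. The transfer-operator, spectral-gap, and Kato perturbation details you sketch are the internal content of Sarig's own proof rather than a departure from the paper's approach.
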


Hence, we have that $t \mapsto \mathcal{P}(q\phi-t\psi)$ is real analytic on $(T(q)-\varepsilon_{0}, T(q)+\varepsilon_{0}).$  We will use this important fact in the next section.  Note that we can also apply Sarig's results to prove that $\mu$ is a Gibbs measure for $\phi.$  Furthermore, by Theorem \ref{thm:S2}, $\mu$ is the unique, ergodic Gibbs state for $\phi.$  Since we now have definitions and necessary results from thermodynamic formalism, we can now prove Theorem \ref{thm:M1} and Theorem \ref{thm:M2}, which are about the phase transitions of the multifractal spectrum.

\section{Proof of Theorems 1.1 and 1.2}
We will prove that the multifractal spectrum is analytic by taking advantage of a decomposition for $(\alpha_{\inf},\alpha_{\sup}).$  We take these steps in our proof.
\begin{enumerate}[I]
\item  Let $Q=[q_{0},q_{1}]$ for some $q_{0},q_{1} \in \mathbb{R} \cup \{-\infty,\infty\}.$  We prove that the functions $t \mapsto \mathcal{P}(q\phi-t\psi),$ $T(q),$ and $\alpha(q)$ are analytic on open subintervals of $Q^{\complement}.$  Then, we prove that the multifractal spectrum is analytic on open subintervals of $\{\alpha(q): q \in Q^{\complement}\}.$  
\item  We prove that the multifractal spectrum is analytic on $(\alpha^{+},\alpha_{\lim})$ and $(\alpha_{\lim},\alpha^{-}).$
\item  Finally, we assume that $Q=[q_{0},q_{1}]$ such that $0<q_{0}<q_{1}<\infty$ and $0< \alpha_{\lim} \le \infty$ exists.  Our result about multifractal spectrum's phase transitions follows. 
\end{enumerate}

\subsection{The Set $Q^{\complement}$ and Its Connection to $T(q)$}\hspace*{\fill} \par
First, we recall the definition of $Q:$
\[Q:=\{q \in \mathbb{R}: T(q)=\tilde{t}(q)\}.\]
We will prove that the functions $f_{\mu}(\alpha),$ $\alpha(q),$ and $T(q)$ are analytic on sets related to $Q^{\complement}.$

Fix $q \in Q^{\complement}.$  By Proposition \ref{cor:AN1}, $\mathcal{P}(q\phi-t\psi)$ is an analytic function of $t$ in a neighbourhood around $T(q).$  Hence, we can now prove that $\alpha(q)$ is analytic on open subintervals of $Q^{\complement}.$

\begin{proposition}\label{prop:B1}
Let $\phi:\Sigma \rightarrow \mathbb{R}^{-}$ be locally H\"older with Gibbs state $\mu$ and let $\mathcal{P}(\phi)<\infty.$
Take $\psi: \Sigma \rightarrow \mathbb{R}^{+}$ as a locally H\"older metric potential.  Furthermore, assume that $\phi$ and $\psi$ are non-cohomologous.  $T(q)$ is strictly convex, well defined, and analytic on open subintervals $Q^{\complement}.$  Furthermore, $\alpha(q)$ is well defined and analytic on open subintervals of $Q^{\complement}.$  
\end{proposition}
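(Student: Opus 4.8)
The plan is to characterise $T(q)$ on $Q^{\complement}$ as the implicit solution of $\mathcal{P}(q\phi - t\psi) = 0$ and then invoke the analytic implicit function theorem. First I would establish well-definedness. For $q \in Q^{\complement}$ we have $T(q) > \tilde{t}(q)$ by the definition of $Q$, so at $t = T(q)$ the pressure $\mathcal{P}(q\phi - t\psi)$ is finite. By Proposition \ref{cor:AN1} the map $t \mapsto \mathcal{P}(q\phi - t\psi)$ is real analytic, hence continuous, on a neighbourhood of $T(q)$; moreover it is strictly decreasing there, since $\psi > 0$ forces $\partial_{t}\,\mathcal{P}(q\phi - t\psi) = -\int \psi\, \mathrm{d}\mu_{q,t} < 0$, where $\mu_{q,t}$ is the equilibrium state for $q\phi - t\psi$ supplied by Theorem \ref{thm:MR1}. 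A strictly decreasing continuous function attains the value $0$ at most once, so the infimum defining $T(q)$ is attained at the unique zero, giving $\mathcal{P}(q\phi - T(q)\psi) = 0$ and well-definedness.

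Next I would obtain analyticity by applying the implicit function theorem to $P(q,t) := \mathcal{P}(q\phi - t\psi)$ near a point $(q_{0}, T(q_{0}))$ with $q_{0} \in Q^{\complement}$. The transversality hypothesis $\partial_{t} P = -\int \psi\, \mathrm{d}\mu_{q} \neq 0$ is exactly the strict monotonicity just noted. What is needed beyond Proposition \ref{cor:AN1} is \emph{joint} real-analyticity of $P$ in $(q,t)$, rather than analyticity in $t$ for fixed $q$; this follows from Sarig's analytic perturbation theory for the Gurevich pressure, because $(q,t) \mapsto q\phi - t\psi$ is a linear, hence analytic, map into the space of locally H\"older potentials and the pressure depends analytically on the potential throughout the region where it is finite. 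The implicit function theorem then yields that $T(q)$ is real analytic on a neighbourhood of $q_{0}$, and since $q_{0} \in Q^{\complement}$ was arbitrary, $T(q)$ is analytic on every open subinterval of $Q^{\complement}$. Differentiating the identity $P(q, T(q)) = 0$ gives
\[
\alpha(q) = -T'(q) = \frac{\int \phi\, \mathrm{d}\mu_{q}}{-\int \psi\, \mathrm{d}\mu_{q}},
\]
which is well-defined and, as the derivative of an analytic function, analytic on the same open subintervals.

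Finally I would prove strict convexity. Proposition \ref{prop:CD1} already gives convexity; to upgrade it to strict convexity on $Q^{\complement}$ I would differentiate $P(q, T(q)) = 0$ a second time and use the standard variance formula from thermodynamic formalism: $T''(q)$ is a positive multiple of the asymptotic variance $\sigma^{2}_{\mu_{q}}(\phi - T'(q)\psi)$. This variance vanishes if and only if $\phi - T'(q)\psi$ is cohomologous to a constant, that is $\phi - T'(q)\psi = c + (u - u\circ\sigma)$ for some constant $c$ and some locally H\"older $u$. The main obstacle is to exclude this using the hypothesis that $\phi$ and $\psi$ are non-cohomologous: I would argue that such an affine relation between $\phi$ and $\psi$ would, after absorbing the constant and rescaling, contradict non-cohomology, and would moreover force $T$ to be affine and thereby collapse the relevant interval of $Q^{\complement}$ into $Q$. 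Hence $\sigma^{2}_{\mu_{q}}(\phi - T'(q)\psi) > 0$, so $T''(q) > 0$ and $T$ is strictly convex on $Q^{\complement}$. The delicate points to watch are the precise passage from ``non-cohomologous'' to ``not affinely cohomologous'' and the integrability $\phi, \psi \in \mathcal{L}^{1}(\mu_{q})$ required for the variance formula, which is guaranteed by the running assumptions accompanying Theorem \ref{thm:MR1}.
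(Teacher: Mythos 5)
Your proposal is correct and follows the same core strategy as the paper's own proof: characterise $T(q)$ on $Q^{\complement}$ as the implicit solution of $\mathcal{P}(q\phi - T(q)\psi) = 0$, invoke Proposition \ref{cor:AN1} for analyticity of the pressure near $T(q)$, compute $\partial_{t}\mathcal{P}(q\phi-t\psi) = -\int \psi\, \mathrm{d}\mu_{q} < 0$ (the paper cites Mauldin--Urba\'nski, Proposition 2.6.13, for this derivative), and conclude via the implicit function theorem. You go beyond the paper in two places, and both additions are genuine improvements rather than detours. First, you observe that the implicit function theorem requires \emph{joint} real-analyticity of $(q,t) \mapsto \mathcal{P}(q\phi - t\psi)$, whereas Proposition \ref{cor:AN1} literally provides only analyticity in $t$ for each fixed $q$; the paper applies the implicit function theorem without addressing this point, and your repair---linearity of $(q,t)\mapsto q\phi - t\psi$ into the space of locally H\"older potentials combined with Sarig's analytic perturbation theory for the transfer operator---is exactly the right one. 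Second, for strict convexity the paper offers only the one-line assertion that $T$ strictly decreasing together with non-cohomology of $\phi$ and $\psi$ implies strict convexity; your variance argument ($T''(q)$ a positive multiple of $\sigma^{2}_{\mu_{q}}(\phi - T'(q)\psi)$, vanishing only if $\phi - T'(q)\psi$ is cohomologous to a constant) is the correct substantiation, and you rightly flag the remaining delicacy: the paper's definition of ``non-cohomologous'' only excludes $\phi - \psi$ being a coboundary, which does not by itself rule out an affine relation $\phi - s\psi = c + u - u\circ\sigma$ for other slopes $s$ and constants $c$. That gap is present in the published proof as well, so your argument is, if anything, more complete than the paper's.
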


\begin{proof}
Fix an arbitrary $q \in Q^{\complement}.$  By Theorem \ref{thm:MR1}, $\mu_{q}$ is Gibbs for $q\phi-T(q)\psi.$  Let $\varepsilon>0.$  Then, $t \mapsto \mathcal{P}(q\phi-t\psi)$ is an analytic function of $t$ in an $\varepsilon$-neighbourhood around $T(q)$ by Corollary \ref{cor:AN1}.  Consider $\mathcal{P}(q\phi-t\psi)$ for $t \in (T(q)-\varepsilon, T(q)+\varepsilon).$
Since $-\phi,\psi \in \mathcal{L}^{1}(\mu_{q}),$ $\int -(q\phi-T(q)\psi)\, \mathrm{d}\mu_{q}<\infty.$  Hence, $\mu_{q}$ is the equilibrium state for $q\phi-T(q)\psi.$  We will denote $\mathcal{P}(q\phi-t\psi)$ as $\mathcal{P}(q,t).$  Then, by Proposition 2.6.13 on Pg 47 of Mauldin and Urba\'nski \cite{mauldin2003graph}, we can take the derivative of $\mathcal{P}(q\phi-t\psi)$:
\begin{equation}\label{eq:F}
\frac{\partial P(q,t)}{\partial t}=-\int_{\Sigma} \psi\, \mathrm{d}\mu_{q}.
\end{equation}

Since $\psi(x)>0$ for every $x \in \Sigma,$ Equation (\ref{eq:F}) gives us that
$$\frac{\partial P(q,t)}{\partial t}<0 \text{ and } \mathcal{P}(q\phi-T(q)\psi)=0.$$

Hence, $T(q)$ is well defined and analytic by the implicit function theorem.  Since $T(q)$ is strictly decreasing and $\phi$ and $\psi$ are non-cohomologous to each other, $T(q)$ is strictly convex.  Furthermore, $\alpha(q)$ is well defined and analytic because $T(q)$ is analytic and strictly convex.  
\end{proof}

We now prove some results about the multifractal spectrum.

\begin{lemma}
Let $\phi:\Sigma \rightarrow \mathbb{R}^{-}$ be locally H\"older with Gibbs state $\mu$ and let $\mathcal{P}(\phi)<\infty.$
Take $\psi: \Sigma \rightarrow \mathbb{R}^{+}$ as a locally H\"older metric potential.  Furthermore, assume that $\phi$ and $\psi$ are non-cohomologous.  For each $\alpha \in \{\alpha(q): q \in Q^{\complement}\},$
\begin{equation}\label{eq:G}
f_{\mu}(\alpha)=f_{\mu}(\alpha(q))=T(q)+q\alpha(q).
\end{equation}
\end{lemma}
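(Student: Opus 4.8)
The plan is to use the Legendre/Fenchel transform characterisation of $f_{\mu}$ supplied by Theorem~\ref{thm:I1}, namely $f_{\mu}(\alpha)=\inf_{q \in \mathbb{R}}\{T(q)+q\alpha\}$ valid for every $\alpha \in (\alpha_{\inf},\alpha_{\sup})$. Reading equation~\ref{eq:G} through this lens, what must be shown is that when $\alpha=\alpha(q_{0})$ for a fixed $q_{0} \in Q^{\complement}$, the infimum defining $f_{\mu}(\alpha(q_{0}))$ is attained precisely at $q=q_{0}$, so the whole argument reduces to a convex-duality identification of the minimiser.

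First I would fix $q_{0} \in Q^{\complement}$ and put $\alpha=\alpha(q_{0})$. By the definition of $\alpha(q)$ on $Q^{\complement}$ one has $\alpha(q_{0})=-T'(q_{0})$, and by Proposition~\ref{prop:B1} the function $T$ is analytic, hence differentiable, at $q_{0}$, so this slope is well defined. I would then set $g(q):=T(q)+q\,\alpha(q_{0})$ and observe that $g$ is convex on all of $\mathbb{R}$, being the sum of the convex function $T$ (Proposition~\ref{prop:CD1}) and the affine function $q \mapsto q\,\alpha(q_{0})$. Differentiating at $q_{0}$ gives $g'(q_{0})=T'(q_{0})+\alpha(q_{0})=0$. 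Since a convex function satisfies $g(q)\ge g(q_{0})+g'(q_{0})(q-q_{0})$ for all $q$, the vanishing of $g'(q_{0})$ forces $g(q)\ge g(q_{0})$ everywhere, so $q_{0}$ is a global minimiser and $\inf_{q \in \mathbb{R}}g(q)=g(q_{0})=T(q_{0})+q_{0}\,\alpha(q_{0})$. Substituting back into Theorem~\ref{thm:I1} yields $f_{\mu}(\alpha(q_{0}))=T(q_{0})+q_{0}\,\alpha(q_{0})$, which is exactly equation~\ref{eq:G}.

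The one point requiring care is that the infimum in Theorem~\ref{thm:I1} runs over all of $\mathbb{R}$, including the set $Q$, where $T$ agrees with the line $\tilde{t}$ and is merely affine rather than strictly convex. The hard part is therefore to rule out that the infimum is approached along that linear piece rather than realised at $q_{0}$; I would resolve this by emphasising that the supporting-hyperplane inequality above uses only global convexity of $T$ (Proposition~\ref{prop:CD1}) and not strict convexity, so the affine stretch over $Q$ poses no obstruction. Strict convexity of $T$ on $Q^{\complement}$ (Proposition~\ref{prop:B1}) enters only to guarantee that $q_{0}$ is the \emph{unique} minimiser — equivalently that $\alpha(\cdot)$ is injective on $Q^{\complement}$, so that $\alpha$ determines $q_{0}$ — which makes the value $f_{\mu}(\alpha(q_{0}))$ unambiguous but is not needed for the equality itself.
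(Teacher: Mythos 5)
Your proof is correct and takes essentially the same route as the paper: both invoke the Legendre-transform formula of Theorem~\ref{thm:I1} and identify the minimiser of $q \mapsto T(q)+q\,\alpha(q_{0})$ through the first-order condition $T'(q_{0})+\alpha(q_{0})=0$, which holds by the definition of $\alpha(q)$ on $Q^{\complement}$. The only difference is that you explicitly use the global convexity of $T$ (Proposition~\ref{prop:CD1}) to upgrade the critical point to a global minimum over all of $\mathbb{R}$, including the affine stretch over $Q$; the paper stops at the critical-point computation and leaves this step implicit, so your version is, if anything, slightly more complete.
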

\begin{proof}
Take $\alpha=\alpha(q)$ for some $q \in Q^{\complement}.$  By Theorem \ref{thm:I1},
$$f_{\mu}(\alpha)=\inf\limits_{q \in \mathbb{R}}\{T(q)+q\alpha\}.$$
Then, since $T(q)$ is analytic in a neighbourhood of our $q \in Q^{\complement},$
$\frac{\mathrm{d}}{\mathrm{d}q}(T(q)+q\alpha)=0$ when $T'(q)+\alpha=0.$  We exactly have that $\alpha=-T'(q)=\alpha(q).$  Hence, 
$$f_{\mu}(\alpha)=T(q)+q\alpha(q)$$
for our $q \in Q^{\complement}.$
\end{proof}

This lemma gives us a formula we need for the proof of the following proposition.

\begin{proposition}\label{prop:A1}
Let $\phi:\Sigma \rightarrow \mathbb{R}^{-}$ be locally H\"older with Gibbs state $\mu$ and let $\mathcal{P}(\phi)<\infty.$
Take $\psi: \Sigma \rightarrow \mathbb{R}^{+}$ as a locally H\"older metric potential.  Furthermore, assume that $\phi$ and $\psi$ are non-cohomologous. The multifractal spectrum is analytic and strictly concave on any open subinterval $S$ of $\{\alpha(q): q \in Q^{\complement}\}.$
\end{proposition}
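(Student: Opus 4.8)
The plan is to realize $f_\mu$ on $S$ as an explicit composition of analytic functions by inverting the map $q \mapsto \alpha(q)$, and then to read off strict concavity directly from the Legendre duality between $T$ and $f_\mu$. First I would invoke Proposition \ref{prop:B1}, which supplies, on any open subinterval of $Q^{\complement}$, that the temperature function $T$ is strictly convex and real analytic and that $\alpha(q) = -T'(q)$ there. Strict convexity means $T''(q) > 0$ on such a subinterval, so $\alpha'(q) = -T''(q) < 0$; hence $q \mapsto \alpha(q)$ is a strictly decreasing, real-analytic bijection from an open subinterval of $Q^{\complement}$ onto its image. Since $S$ is a connected open subinterval of $\{\alpha(q) : q \in Q^{\complement}\}$, it sits inside the image of a single connected component of $Q^{\complement}$, and by the analytic inverse function theorem (applicable precisely because $\alpha'(q) \neq 0$) the inverse $\alpha \mapsto q(\alpha)$ is real analytic on $S$.

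Next I would use the formula from the preceding lemma, $f_\mu(\alpha(q)) = T(q) + q\alpha(q)$, rewritten on $S$ as $f_\mu(\alpha) = T(q(\alpha)) + q(\alpha)\,\alpha$. As a sum and product of the analytic functions $T \circ q$, $q(\alpha)$, and the identity $\alpha \mapsto \alpha$, this immediately exhibits $f_\mu$ as real analytic on $S$.

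For strict concavity I would then differentiate. Writing $q = q(\alpha)$ and using $T'(q) = -\alpha(q) = -\alpha$, the chain rule gives
\[
f_\mu'(\alpha) = T'(q)q'(\alpha) + q'(\alpha)\alpha + q(\alpha) = q'(\alpha)\bigl(T'(q) + \alpha\bigr) + q(\alpha) = q(\alpha),
\]
which is the standard Legendre-duality identity: the derivative of the Fenchel transform of $T$ recovers the inverse of $-T'$. Differentiating once more yields $f_\mu''(\alpha) = q'(\alpha) = 1/\alpha'(q) = -1/T''(q) < 0$, so $f_\mu$ is strictly concave on $S$.

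I expect the only delicate point to be the bookkeeping around the domains: one must check that the connected open interval $S$ really does pull back under $\alpha(\cdot)$ to an open subinterval of a single component of $Q^{\complement}$ on which Proposition \ref{prop:B1} applies, so that the inverse function theorem is legitimate and $T''(q) > 0$ (which ultimately rests on the non-cohomologousness of $\phi$ and $\psi$) holds throughout. The remaining steps are the routine Legendre-transform differentiations displayed above.
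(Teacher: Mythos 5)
Your proposal is correct and follows essentially the same route as the paper's proof: both invert $q \mapsto \alpha(q)$ using $\alpha'(q) = -T''(q) < 0$ (strict convexity of $T$ from non-cohomology, via Proposition \ref{prop:B1}), apply the formula $f_{\mu}(\alpha(q)) = T(q) + q\alpha(q)$ from the preceding lemma, and then compute $\frac{\mathrm{d}}{\mathrm{d}\alpha}f_{\mu}(\alpha) = q(\alpha)$ and $\frac{\mathrm{d}^{2}}{\mathrm{d}\alpha^{2}}f_{\mu}(\alpha) = 1/\alpha'(q) = -1/T''(q) < 0$ to obtain analyticity and strict concavity. Your version is, if anything, slightly more careful than the paper's in making the analytic inverse function theorem and the domain bookkeeping explicit.
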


\begin{proof}
Let $\alpha=\alpha(q)$ s.t. $q \in Q^{\complement}.$  Remember that $\alpha(q)$ is well defined and analytic on $Q^{\complement}.$  By Equation (\ref{eq:G}), $f_{\mu}(\alpha(q))=T(q)+q\alpha(q).$  To prove that $f_{\mu}(\alpha)$ is analytic as a function of $\alpha,$ we will invert $\alpha(q).$  We now take the derivative of $f_{\mu}(\alpha(q)),$ which is also used in the proof of Lemma 6.17 in Pg 89 of Barreira \cite{barreira2008dimension}:
\begin{equation}\label{eq:H}
\frac{\mathrm{d}}{\mathrm{d}q}f_{\mu}(\alpha(q))= \frac{d}{d\alpha(q)}(f_{\mu}(\alpha(q)))\alpha'(q)=q\alpha'(q).
\end{equation}

Then, the derivative with respect to $\alpha(q)$ of the multifractal spectrum is 
\[\frac{\mathrm{d}}{\mathrm{d}\alpha(q)}(f_{\mu}(\alpha(q)))=q.\] 
Because we took the derivative in terms of $\alpha(q),$ $q$ is a function of $\alpha,$ i.e., $q=q(\alpha).$  Since $T(q)$ is strictly convex on $Q^{\complement},$ $-T''(q)<0$ for each $q \in Q^{\complement}.$  Because $\alpha'(q)=-T''(q)<0,$ $\alpha(q)$ and $q(\alpha)$ are invertible.  Hence, since $\alpha(q)$ is analytic, $\frac{\mathrm{d}}{\mathrm{d}\alpha(q)}(f_{\mu}(\alpha(q)))=q(\alpha)$ is analytic.  Thus, since $T(q(\alpha))$ and $q(\alpha)$ are analytic, $f_{\mu}(\alpha)$ is analytic as a function of $\alpha.$

To prove the strict concavity of $f_{\mu}(\alpha),$ we take further derivatives of the multifractal spectrum with respect to $q.$  Then, it follows that
$$1=\frac{\mathrm{d}^2}{\mathrm{d}q^2}(f_{\mu}(\alpha(q)))=\frac{\mathrm{d}^{2}}{\mathrm{d}\alpha^{2}}(f_{\mu}(\alpha(q)))\alpha'(q).$$

We have proven that
$$\frac{\mathrm{d}^2}{\mathrm{d}\alpha^2}(f_{\mu}(\alpha(q)))=\frac{1}{\alpha'(q)}=\frac{-1}{T''(q)}<0$$
because $\phi$ and $\psi$ are not cohomologous to each other (hence, $T(q)$ is strictly convex).  Thus, the multifractal spectrum is strictly concave on any open subinterval $S \subset \{\alpha(q): q \in Q^{\complement}\}.$
\end{proof}

The proof of Proposition \ref{prop:A1} also establishes two further results about $\alpha(q)$ and $f_{\mu}(\alpha)$ respectively.

\begin{lemma}
Let $\phi:\Sigma \rightarrow \mathbb{R}^{-}$ be locally H\"older with Gibbs state $\mu$ and let $\mathcal{P}(\phi)<\infty.$
Take $\psi: \Sigma \rightarrow \mathbb{R}^{+}$ as a locally H\"older metric potential.  Furthermore, assume that $\phi$ and $\psi$ are non-cohomologous.  $\alpha(q)$ is a strictly decreasing function on open subintervals of $Q^{\complement}.$
\end{lemma}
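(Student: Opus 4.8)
The plan is to read the result off directly from the computations already carried out in the proof of Proposition~\ref{prop:A1}. The key observation is that on $Q^{\complement}$ the definition of $\alpha(q)$ reduces to $\alpha(q) = -T'(q)$, so the monotonicity of $\alpha$ is entirely controlled by the convexity of $T$. There is essentially nothing new to establish beyond packaging what is already known.

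First I would fix an arbitrary open subinterval $S \subset Q^{\complement}$. By Proposition~\ref{prop:B1}, $T$ is analytic and strictly convex on $S$, and $\alpha(q)$ is well defined and analytic there. In particular $T$ is twice differentiable on $S$, and strict convexity gives $T''(q) > 0$ for every $q \in S$. The strictness here is exactly the point at which the non-cohomology hypothesis on $\phi$ and $\psi$ enters: were $\phi$ and $\psi$ cohomologous, $T$ would degenerate to an affine function with $T'' \equiv 0$, and we would only obtain weak monotonicity.

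Next I would differentiate the identity $\alpha(q) = -T'(q)$ on $S$ to obtain $\alpha'(q) = -T''(q)$, which is precisely the computation appearing in the proof of Proposition~\ref{prop:A1}. Combining this with $T''(q) > 0$ yields $\alpha'(q) < 0$ throughout $S$. Since $\alpha$ is analytic, hence continuously differentiable, on $S$ with strictly negative derivative, it is strictly decreasing there; as $S$ was an arbitrary open subinterval of $Q^{\complement}$, the claim follows.

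I do not anticipate a genuine obstacle, because the analytic heavy lifting — the existence of the Gibbs state $\mu_q$, the differentiation formula \eqref{eq:F} for the pressure, the implicit-function-theorem argument giving analyticity of $T$, and above all the strict convexity coming from non-cohomology — has already been completed in Propositions~\ref{prop:B1} and~\ref{prop:A1}. The only point requiring care is to remain on an open subinterval of $Q^{\complement}$, so that $T''$ genuinely exists and is strictly positive rather than merely nonnegative; this is what upgrades the conclusion from weak to strict monotonicity of $\alpha$.
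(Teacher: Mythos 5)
Your proposal is correct and follows essentially the same route as the paper: the paper's own proof simply cites Proposition \ref{prop:A1}, where the identity $\alpha'(q)=-T''(q)<0$ on $Q^{\complement}$ is derived from the strict convexity of $T$ (which in turn rests on the non-cohomology of $\phi$ and $\psi$ via Proposition \ref{prop:B1}). Your write-up just makes that one-line citation explicit.
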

\begin{proof}
Since $\alpha'(q)<0$ on $Q^{\complement},$ the lemma follows from Proposition \ref{prop:A1}.
\end{proof}

\begin{proposition}\label{prop:ID1}
Let $\phi:\Sigma \rightarrow \mathbb{R}^{-}$ be locally H\"older with Gibbs state $\mu$ and let $\mathcal{P}(\phi)<\infty.$
Take $\psi: \Sigma \rightarrow \mathbb{R}^{+}$ as a locally H\"older metric potential.  Furthermore, assume that $\phi$ and $\psi$ are non-cohomologous.  The multifractal spectrum $f_{\mu}(\alpha)$
\begin{enumerate}
\item increases on open subintervals $S$ of $\{\alpha(q): q<0\} \cap \{\alpha(q): q \in Q^{\complement}\}$ 
\item decreases on open subintervals of $\{\alpha(q): q>0\} \cap \{\alpha(q): q \in Q^{\complement}\}.$
\end{enumerate}
\end{proposition}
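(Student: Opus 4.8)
The plan is to read off both monotonicity statements from the sign of the derivative of the composite map $q \mapsto f_\mu(\alpha(q))$, taken in the variable $q$, rather than from the $\alpha$-derivative of $f_\mu$ directly. The essential ingredient is already in hand from the proof of Proposition \ref{prop:A1}: differentiating $f_\mu(\alpha(q)) = T(q) + q\alpha(q)$ and using $\alpha(q) = -T'(q)$ to cancel the first two terms, Equation (\ref{eq:H}) records that on every open subinterval of $Q^{\complement}$,
\[
\frac{\mathrm{d}}{\mathrm{d}q} f_\mu(\alpha(q)) = q\,\alpha'(q).
\]
Since $f_\mu(\alpha(q))$ is analytic on such subintervals by Proposition \ref{prop:B1}, this derivative is well defined there, and its sign tells us how $f_\mu$ behaves as we traverse the corresponding subinterval $S \subset \{\alpha(q): q \in Q^{\complement}\}$ in the direction of increasing $q$.

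First I would record the sign of $\alpha'(q)$. Because $\phi$ and $\psi$ are non-cohomologous, $T$ is strictly convex on $Q^{\complement}$ by Proposition \ref{prop:B1}, so $T''(q) > 0$ and hence $\alpha'(q) = -T''(q) < 0$ on every open subinterval of $Q^{\complement}$; this is exactly the content of the preceding lemma. With this, the two claims are immediate sign computations. On $\{q < 0\} \cap \{q \in Q^{\complement}\}$ both factors in $q\,\alpha'(q)$ are negative, so $q\,\alpha'(q) > 0$ and $q \mapsto f_\mu(\alpha(q))$ is strictly increasing, which is claim (1). On $\{q > 0\} \cap \{q \in Q^{\complement}\}$ the factor $q$ is positive while $\alpha'(q) < 0$, so $q\,\alpha'(q) < 0$ and $q \mapsto f_\mu(\alpha(q))$ is strictly decreasing, which is claim (2).

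The main obstacle is orientation bookkeeping. Since $\alpha'(q) < 0$, the parametrization $q \mapsto \alpha(q)$ is orientation-reversing, so I must phrase each conclusion as monotonicity of $f_\mu$ along the curve $q \mapsto \alpha(q)$ traversed with $q$ increasing; this is the sense in which the subintervals $S$ of $\{\alpha(q): q<0\}$ and $\{\alpha(q): q>0\}$ are naturally described, and it is the reading under which the two assertions hold. Everything stays inside $Q^{\complement}$, where Proposition \ref{prop:B1} guarantees that $T$, $\alpha$, and $f_\mu \circ \alpha$ are all analytic and that $\alpha'(q) \neq 0$, so the termwise differentiation underlying Equation (\ref{eq:H}) and the subsequent reading of signs are all legitimate. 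Beyond this, no estimates are required: once Equation (\ref{eq:H}) and the strict convexity of $T$ are available, the result reduces to the single product-rule computation for the sign of $q\,\alpha'(q)$.
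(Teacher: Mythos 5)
Your proposal is correct and follows essentially the same route as the paper: both differentiate $f_{\mu}(\alpha(q))$ via Equation~(\ref{eq:H}) to get $q\,\alpha'(q)$, use strict convexity of $T$ (from non-cohomology) to fix $\alpha'(q)=-T''(q)<0$, and read off the two signs. Your explicit orientation remark is in fact a small improvement, since the paper's own proof labels the quantity $q\,\alpha'(q)$ as $\frac{\mathrm{d}}{\mathrm{d}\alpha}f_{\mu}(\alpha(q))$ when it is really the $q$-derivative (the true $\alpha$-derivative is just $q$), and your reading of the statement as monotonicity along the orientation-reversing parametrization $q\mapsto\alpha(q)$ is exactly the sense in which the proposition is used later in the paper.
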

\begin{proof}
Equation (\ref{eq:H}) gives us that 
$$\frac{\mathrm{d}}{\mathrm{d}\alpha}(f_{\mu}(\alpha(q))=q\alpha'(q)=-qT''(q).$$

Since $-T''(q)<0$ on $Q^{\complement},$ we get the following.
$\text{If } q<0, \frac{\mathrm{d}}{\mathrm{d}\alpha}(f_{\mu}(\alpha(q))>0.$ $\text{If } q>0, \frac{\mathrm{d}}{\mathrm{d}\alpha}(f_{\mu}(\alpha(q))<0.$  Hence, the increasing and decreasing behaviour on open subintervals of $\{\alpha(q):q\in Q^{\complement}\}$ is immediate.
\end{proof}

In summary, we proved that if $T(q)$ is analytic on open subintervals of $Q^{\complement},$ then $\alpha(q)$ is analytic on open subintervals of $Q^{\complement}.$  In turn, this gives us that $f_{\mu}(\alpha)$ is analytic on open subintervals of $\{\alpha(q):q\in Q^{\complement}\}.$  We also find that increasing and decreasing behaviour on open subintervals of $\{\alpha(q):q\in Q^{\complement}\}$ is based on the sign of each $q\in Q^{\complement}.$

\subsection{The Intervals $(\alpha^{+},\alpha_{\lim})$ and $(\alpha_{\lim},\alpha^{-})$}\hspace*{\fill} \par
Now that we have proven that the multifractal spectrum is analytic on open subintervals of $\{\alpha(q):q\in Q^{\complement}\},$ we must consider the behaviour of the multifractal spectrum on other open subintervals of $(\alpha_{\inf},\alpha_{\sup}).$  Without loss of generality, we assume that $Q=[q_{0},q_{1}]$ for $-\infty<0<q_{0}<q_{1}<\infty.$  As stated earlier, $\alpha^{-}=\alpha(q_{0})$ and $\alpha^{+}=\alpha(q_{1}).$   Using our decomposition of $(\alpha_{\inf},\alpha_{\sup}),$ we know that we must prove that the multifractal spectrum is analytic on $(\alpha^{+},\alpha_{\lim})$ and $(\alpha_{\lim},\alpha^{-}).$  

\begin{proposition}\label{prop:A2}
Let $\phi:\Sigma \rightarrow \mathbb{R}^{-}$ be locally H\"older with Gibbs state $\mu$ and let $\mathcal{P}(\phi)<\infty.$
Take $\psi: \Sigma \rightarrow \mathbb{R}^{+}$ as a locally H\"older metric potential.  Furthermore, assume that $\phi$ and $\psi$ are non-cohomologous. 
\begin{enumerate}
\item The function $f_{\mu}(\alpha)=T(q_{1})+q_{1}\alpha$ on $(\alpha^{+},\alpha_{\lim})$ and $f_{\mu}(\alpha)=T(q_{0})+q_{0}\alpha$ on $(\alpha_{\lim},\alpha^{-}).$  
\item The multifractal spectrum is an increasing linear function on $(\alpha^{+},\alpha_{\lim})$ and $(\alpha_{\lim},\alpha^{-})$ because $q_{0},q_{1}>0.$  
\item Furthermore, the sign of $q_{0}$ and $q_{1}$ determine the increasing or decreasing behaviour of the multifractal spectrum on $(\alpha^{+},\alpha_{\lim})$ and $(\alpha_{\lim},\alpha^{-}).$ 
\end{enumerate}
\end{proposition}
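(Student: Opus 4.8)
The plan is to read off all three claims directly from the Fenchel-transform representation $f_\mu(\alpha)=\inf_{q\in\mathbb{R}}\{T(q)+q\alpha\}$ of Theorem \ref{thm:I1}, by locating, for each $\alpha$ in the two gap intervals, the precise $q$ at which the infimum is attained. The geometric picture driving everything is the following. Since $T$ is convex and decreasing (Proposition \ref{prop:CD1}) and coincides with the affine function $\tilde t(q)=-\alpha_{\lim}q+t_\infty$ on $Q=[q_0,q_1]$ (Proposition \ref{prop:L1}), the temperature function is strictly convex and analytic on $Q^{\complement}$ and has constant slope $-\alpha_{\lim}$ on the interior of $Q$. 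Consequently $\alpha(q)=-T'(q)$ is strictly decreasing on $Q^{\complement}$ and equals $\alpha_{\lim}$ throughout $(q_0,q_1)$; matching one-sided slopes at the corners gives the ordering $\alpha^{+}\le\alpha_{\lim}\le\alpha^{-}$, so that $(\alpha^{+},\alpha_{\lim})$ and $(\alpha_{\lim},\alpha^{-})$ are exactly the ranges of $\alpha$-values skipped by $\alpha(q)$ as $q$ crosses the phase transitions at $q_1$ and $q_0$.

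First I would prove item (1). Fix $\alpha\in(\alpha^{+},\alpha_{\lim})$ and set $g(q):=T(q)+q\alpha$, so that $g'(q)=T'(q)+\alpha=\alpha-\alpha(q)$ wherever $T$ is differentiable. Using the monotonicity of $\alpha(q)$ I would check the sign of $g'$: for $q<q_1$ one has $\alpha(q)\ge\alpha_{\lim}>\alpha$, so $g$ is strictly decreasing there (including across the interior of $Q$, where $g$ has the nonzero constant slope $\alpha-\alpha_{\lim}<0$), while for $q>q_1$ one has $\alpha(q)<\alpha^{+}<\alpha$, so $g$ is strictly increasing. Hence the infimum is attained uniquely at $q=q_1$, giving $f_\mu(\alpha)=T(q_1)+q_1\alpha$. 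The symmetric computation for $\alpha\in(\alpha_{\lim},\alpha^{-})$ shows $g$ decreases on $(-\infty,q_0)$ and increases on $(q_0,\infty)$, so the infimum is attained at $q=q_0$ and $f_\mu(\alpha)=T(q_0)+q_0\alpha$.

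With item (1) in hand, items (2) and (3) are immediate. On each gap interval $f_\mu$ is an affine function of $\alpha$ whose slope is the corresponding endpoint $q_1$ or $q_0$ of $Q$; thus $f_\mu$ is increasing exactly when that endpoint is positive and decreasing when it is negative, and the standing hypothesis $0<q_0<q_1$ forces both pieces to be increasing linear functions.

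The hard part will be the careful monotonicity bookkeeping in item (1) at the non-smooth points $q_0,q_1$: there $T$ is only one-sided differentiable, so I must argue with the convex subgradients (left and right derivatives) rather than an ordinary derivative, and verify that on the interior of $Q$—where $g$ is affine rather than having an isolated critical point—the constant sign of the slope $\alpha-\alpha_{\lim}$ still forces strict monotonicity and hence pushes the infimum out to the endpoint. This is just the familiar fact that the Legendre--Fenchel transform is affine on the dual intervals corresponding to the corners of $T$; the only real work is to make the attainment of the infimum at $q_0$ and $q_1$ rigorous, which I would do using the strictly decreasing behaviour of $\alpha(q)$ on $Q^{\complement}$ established in the previous subsection.
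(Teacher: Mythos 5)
Your proposal is correct and follows essentially the same route as the paper: both evaluate Iommi's Fenchel-transform formula $f_{\mu}(\alpha)=\inf_{q\in\mathbb{R}}\{T(q)+q\alpha\}$ from Theorem \ref{thm:I1} on the two gap intervals $(\alpha^{+},\alpha_{\lim})$ and $(\alpha_{\lim},\alpha^{-})$, and then read off the affine expressions with slopes $q_{1}$ and $q_{0}$. The one difference is completeness rather than method: the paper's proof simply asserts that the infimum equals $T(q_{1})+q_{1}\alpha$ (respectively $T(q_{0})+q_{0}\alpha$) and then differentiates, whereas you supply the justification for that assertion --- the sign analysis of $g'(q)=\alpha-\alpha(q)$ on $(-\infty,q_{0})$, the interior of $Q$, and $(q_{1},\infty)$, together with the one-sided derivative (subgradient) bookkeeping at the corners $q_{0},q_{1}$ --- which is precisely the step the paper leaves implicit.
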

\begin{proof}
Let $\alpha \in (\alpha(q_{1}), \alpha_{\lim}).$  For each $\alpha \in (\alpha(q_{1}), \alpha_{\lim}),$ we have that 
$$f_{\mu}(\alpha)= \inf\limits_{q \in \mathbb{R}}\{T(q)+q\alpha\}=T(q_{1})+q_{1}\alpha.$$

Hence, $$\frac{\mathrm{d}}{\mathrm{d}\alpha}f_{\mu}(\alpha)=q_{1}>0.$$

Thus, $f_{\mu}(\alpha) $ is an increasing linear function with slope $q_{1}$ on the interval $(\alpha^{+}, \alpha_{\lim}).$  Assume that $\alpha \in (\alpha_{\lim}, \alpha(q_{0})).$  For each $\alpha \in (\alpha_{\lim}, \alpha(q_{0})),$ 
$f_{\mu}(\alpha)= \inf \limits_{q \in \mathbb{R}}\{T(q)+q\alpha\}=T(q_{0})+q_{0}\alpha.$  Hence, $$\frac{\mathrm{d}}{\mathrm{d}\alpha}f_{\mu}(\alpha)=q_{0}>0.$$

Thus, $f_{\mu}(\alpha)$ is an increasing linear function with slope $q_{0}$ on the interval $(\alpha_{\lim}, \alpha^{-}).$
\end{proof}

The multifractal spectrum is linear on $(\alpha_{\lim},\alpha^{-})$ and $(\alpha^{+},\alpha_{\lim})$ because of the endpoints of $Q.$  We can finally prove our main theorems.

\subsection{Phase Transitions- Intervals and Points of $Q$}\hspace*{\fill} \par
Now that we have proven results relating the analyticity of $T(q)$ to the analyticity of $f_{\mu}(\alpha),$ we use Propositions \ref{prop:Q1}, \ref{prop:A1}, and \ref{prop:A2} to prove that the multifractal spectrum has $0$ to $3$ phase transitions when $\alpha_{\lim}$ exists and $0<\alpha_{\lim}<\infty.$  Furthermore, we prove a complementary result when $\alpha_{\lim}=\infty.$  We provide the subsequent proposition to remind the reader about the forms $Q$ can take.

\begin{proposition}
$Q$ can be a closed interval, a half-open infinite interval, a point, or the empty set.
\end{proposition}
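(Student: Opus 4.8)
The plan is to realize $Q$ as the zero set of a single non-negative convex function and then invoke the classification of convex subsets of $\mathbb{R}$. Set $D(q) := T(q) - \tilde{t}(q)$. The first step is to observe that $T(q) \ge \tilde{t}(q)$ for every $q \in \mathbb{R}$. This is immediate from the two definitions: if $t < \tilde{t}(q)$, then $\mathcal{P}(q\phi - t\psi) = \infty > 0$, so such a $t$ cannot lie in $\{t \in \mathbb{R} : \mathcal{P}(q\phi - t\psi) \le 0\}$; hence the infimum defining $T(q)$ is bounded below by $\tilde{t}(q)$. Consequently $D \ge 0$ and $Q = \{q : T(q) = \tilde{t}(q)\} = \{q : D(q) = 0\} = D^{-1}(\{0\})$.

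Next I would assemble the regularity of $D$. By Proposition \ref{prop:CD1} the function $T$ is convex, and by Proposition \ref{prop:L1} we have $\tilde{t}(q) = -\alpha_{\lim}q + t_{\infty}$, which is affine. Therefore $D = T - \tilde{t}$ is convex, finite-valued, and hence continuous on $\mathbb{R}$. Since $D \ge 0$, its zero set coincides with its set of minimizers, and the minimizer set of a convex function is convex; thus $Q$ is a convex subset of $\mathbb{R}$, that is, an interval (possibly degenerate or empty). Because $D$ is continuous, $Q = D^{-1}(\{0\})$ is closed in $\mathbb{R}$.

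Combining these facts, $Q$ is a \emph{closed} convex subset of $\mathbb{R}$, and every such set is exactly one of the following: the empty set, a single point, a bounded closed interval $[q_0, q_1]$, a closed half-line $[q_0, \infty)$ or $(-\infty, q_1]$, or all of $\mathbb{R}$. The final step is to exclude $Q = \mathbb{R}$: if $Q = \mathbb{R}$, then $T \equiv \tilde{t}$ would be affine on all of $\mathbb{R}$, contradicting the strict convexity of $T$ furnished by the non-cohomologousness of $\phi$ and $\psi$ (Proposition \ref{prop:B1}). This leaves precisely the four forms named in the statement, recovering Proposition \ref{prop:Q1}.

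The argument is largely a packaging of earlier results, so the only genuinely delicate points are verifying that $T$ is finite-valued (so that $D$ is continuous and $Q$ is closed rather than merely convex) and correctly identifying the unbounded case as a half-line, which the statement phrases as a ``half-open infinite interval.'' The inequality $T \ge \tilde{t}$ and the exclusion of $Q = \mathbb{R}$ carry the conceptual weight; everything else reduces to the standard structure theory of convex functions on the line.
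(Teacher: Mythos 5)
Your core argument --- $T \ge \tilde{t}$, hence $D = T - \tilde{t}$ is a non-negative convex function and $Q = \{D \le 0\}$ is a sublevel set of a convex function, hence an interval --- is correct, and it is in fact the right repair of the paper's one-line justification. (The paper says the proposition is ``immediate'' because $T$ is \emph{strictly} convex and $\tilde{t}$ is linear; taken literally, global strict convexity would force $Q$ to contain at most two points, so convexity, which is what Proposition \ref{prop:CD1} actually provides and what your sublevel-set argument uses, is the correct ingredient.) However, the two refinements you add on top of this both have genuine problems.

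First, your exclusion of $Q = \mathbb{R}$ is circular. Strict convexity of $T$ is established (Proposition \ref{prop:B1}) only on open subintervals of $Q^{\complement}$; if $Q = \mathbb{R}$, then $Q^{\complement} = \emptyset$ and no contradiction arises --- on $Q$ itself $T = \tilde{t}$ is affine, which is perfectly compatible with convexity and non-cohomology. Indeed, the paper explicitly retains $Q = (-\infty,\infty)$ as a possible case in its enumeration in Section 3.3 (there $f_{\mu} \equiv t_{\infty}$), so this case cannot be ruled out by the argument you give. Second, the closedness of $Q$ is an overreach. It requires $T$ to be finite-valued (hence continuous), which you flag as delicate but never establish, and which genuinely fails under the paper's conventions: the definitions permit $T(q) = \tilde{t}(q) = +\infty$, in which case $q \in Q$. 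This actually occurs in the paper's example in Section 6.2 (the case $\alpha_{\lim} = \infty$), where $Q = (-\infty,0)$, which is not closed --- precisely why the statement allows a ``half-open infinite interval.'' Relatedly, your appeal to Proposition \ref{prop:L1} for affineness of $\tilde{t}$ is only valid under its hypotheses ($\alpha_{\lim} < \infty$ and $t_{\infty}$ finite); outside them $\tilde{t}$ need not be affine, and even inside them the finiteness of $T$ would still need an argument (e.g.\ that $\mathcal{P}(q\phi - t\psi) \le 0$ for $t$ large, which requires a uniform positive lower bound on $\int \psi\, \mathrm{d}\nu$ over invariant measures). The honest output of your method is therefore only that $Q$ is an interval, possibly with non-attained endpoints --- which is exactly the paper's statement; the added closedness and the exclusion of $\mathbb{R}$ are not justified.
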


Analysing $Q$ gives us information about the number of phase transitions for the multifractal spectrum.  We give a thorough discussion of the multifractal spectrum's phase transitions in the case that $Q$ is a closed interval with positive endpoints.

\subsubsection{Positive Closed Interval}\hspace*{\fill} \par
We assume that $Q=[q_{0}, q_{1}]$ for some $q_{0}, q_{1} \in \mathbb{R}$ such that $0<q_{0}<q_{1}<\infty.$    

\begin{proposition}\label{prop:PCD1}
Let $\phi:\Sigma \rightarrow \mathbb{R}^{-}$ be locally H\"older with Gibbs state $\mu$ and let $\mathcal{P}(\phi)<\infty.$
Take $\psi: \Sigma \rightarrow \mathbb{R}^{+}$ as a locally H\"older metric potential.  Furthermore, assume that $\phi$ and $\psi$ are non-cohomologous. We have four possible types of behaviour for the multifractal spectrum as follows.  We call them cases 1 to 4 with respect to the order below.
\begin{enumerate}
\item Let $\alpha(q_{0})>\alpha_{\lim}>\alpha(q_{1}).$  The function $f_{\mu}(\alpha)$ is analytic on $(\alpha_{\inf}, \alpha(q_{1})),$  $(\alpha(q_{1}), \alpha_{\lim}),$ $(\alpha_{\lim}, \alpha(q_{0})),$ and $(\alpha(q_{0}), \alpha_{\sup}).$  There are three phase transitions for the multifractal spectrum at $\alpha(q_{0}), \alpha_{\lim},$ and $\alpha(q_{1}).$
\item Let $\alpha(q_{0})> \alpha_{\lim}=\alpha(q_{1}).$  The function $f_{\mu}(\alpha)$ is analytic on $(\alpha_{\inf}, \alpha_{\lim}),$ $(\alpha_{\lim}, \alpha(q_{0})),$ $(\alpha(q_{0}), \alpha_{\sup}).$  The multifractal spectrum has two phase transitions at $\alpha(q_{0})$ and $\alpha_{\lim}.$
\item Let $\alpha(q_{0})=\alpha_{\lim}>\alpha(q_{1}).$  The function $f_{\mu}(\alpha)$ is analytic on $(\alpha_{\inf}, \alpha(q_{1})),$ $(\alpha(q_{1}), \alpha_{\lim}),$ $(\alpha_{\lim},\alpha_{\sup}).$  The multifractal spectrum two phase transitions at $\alpha(q_{1})$ and $\alpha_{\lim}.$
\item Let $\alpha(q_{0})=\alpha_{\lim}=\alpha(q_{1}).$  The function $f_{\mu}(\alpha)$ is analytic on $(\alpha_{\inf}, \alpha_{\lim}),$ $(\alpha_{\lim},\alpha_{\sup}).$  The multifractal spectrum has a possible phase transition at $\alpha_{\lim}.$  
\end{enumerate}
\end{proposition}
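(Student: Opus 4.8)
The plan is to assemble the global picture of $f_\mu$ on $(\alpha_{\inf},\alpha_{\sup})$ from the local pieces already established and then read off the phase transitions in each case by comparing one-sided derivatives at the junctions $\alpha^+$, $\alpha_{\lim}$, and $\alpha^-$. First I would record the ordering $\alpha^+ \le \alpha_{\lim} \le \alpha^-$. This follows because $T$ is convex (so $T'$ is nondecreasing) while $\tilde t(q) = -\alpha_{\lim} q + t_\infty$ is linear of slope $-\alpha_{\lim}$; since $T \equiv \tilde t$ on the interior of $Q = [q_0,q_1]$, the one-sided derivatives satisfy $T'(q_0^-) \le -\alpha_{\lim} \le T'(q_1^+)$, and negating gives $\alpha^+ = -T'(q_1^+) \le \alpha_{\lim} \le -T'(q_0^-) = \alpha^-$. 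Which of these inequalities are strict is exactly what separates the four cases.

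Next I would invoke the decomposition of $(\alpha_{\inf},\alpha_{\sup})$ together with the strict monotonicity of $\alpha(q)$ on $Q^\complement$. Since $\alpha$ is strictly decreasing on $Q^\complement = (-\infty,q_0) \cup (q_1,\infty)$, the set $\{\alpha(q): q \in Q^\complement\}$ is the disjoint union of $(\alpha^-,\alpha_{\sup})$ (from $q < q_0$) and $(\alpha_{\inf},\alpha^+)$ (from $q > q_1$). On each of these Proposition \ref{prop:A1} gives that $f_\mu$ is analytic and strictly concave, with $\tfrac{d}{d\alpha}f_\mu = q$ and $\tfrac{d^2}{d\alpha^2}f_\mu = -1/T''(q) < 0$. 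On the two middle intervals Proposition \ref{prop:A2} gives that $f_\mu$ is linear, of slope $q_1$ on $(\alpha^+,\alpha_{\lim})$ and of slope $q_0$ on $(\alpha_{\lim},\alpha^-)$. In each case I would then delete whichever middle interval is empty (Case 2 kills $(\alpha^+,\alpha_{\lim})$, Case 3 kills $(\alpha_{\lim},\alpha^-)$, Case 4 kills both) and list the surviving analytic pieces, matching the intervals claimed in the statement.

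The heart of the argument is to verify that each listed junction is genuinely non-analytic and to classify it. At $\alpha^+$ and $\alpha^-$, when they are strictly separated from $\alpha_{\lim}$, the first derivative matches across the junction --- on the concave side $\tfrac{d}{d\alpha}f_\mu \to q_1$ as $q \to q_1^+$ at $\alpha^+$, and $\to q_0$ as $q \to q_0^-$ at $\alpha^-$, agreeing with the slope of the adjacent linear piece --- but the second derivative jumps from $-1/T''(q) \neq 0$ on the concave side to $0$ on the linear side, giving second-order phase transitions. At $\alpha_{\lim}$ the first derivative itself jumps, since the piece just below has slope $q_1$ and the piece just above has slope $q_0$; because $q_0 < q_1$ this is a genuine kink, a first-order transition, consistent with concavity since the slope decreases as $\alpha$ increases. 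Feeding in the ordering from the first step, Case 1 yields three transitions at $\alpha^+$, $\alpha_{\lim}$, $\alpha^-$; in Cases 2 and 3 one linear piece is empty, so the corresponding concave--linear junction collapses onto $\alpha_{\lim}$ and leaves a single first-order transition there, for two transitions in total; and Case 4 has both linear pieces empty, so only the kink at $\alpha_{\lim}$ remains.

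I expect the main obstacle to be the degenerate cases, where an interval collapses to a point so that two junctions coincide at $\alpha_{\lim}$ and the non-analyticity must be read off from slope data alone. In Case 4, for instance, the two concave pieces abut directly at $\alpha_{\lim}$, and the transition must be detected from the slope jump $q_1 \to q_0$ rather than from a concave-to-linear change in curvature. Confirming that $q_0 < q_1$ forces this jump --- so that the two analytic branches cannot be fused into one --- is the delicate point separating a genuine phase transition from a removable junction, and it is precisely what the statement hedges by calling $\alpha_{\lim}$ ``a possible phase transition'' in Case 4.
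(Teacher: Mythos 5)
Your proposal is correct and follows essentially the same route as the paper: decompose $(\alpha_{\inf},\alpha_{\sup})$ into the strictly concave analytic pieces coming from $Q^{\complement}$ (Proposition \ref{prop:A1}) and the linear pieces of slopes $q_{1},q_{0}$ on $(\alpha^{+},\alpha_{\lim})$ and $(\alpha_{\lim},\alpha^{-})$ (Proposition \ref{prop:A2}), then read off the phase transitions at the junction points, with the degenerate cases 2--4 obtained by collapsing the empty linear pieces. In fact you are somewhat more careful than the paper, which simply asserts that the transitions ``follow'': your explicit comparison of one-sided slopes (the kink $q_{1}\to q_{0}$ at $\alpha_{\lim}$) and of curvature (strictly concave against linear at $\alpha^{\pm}$) supplies the justification the paper leaves implicit, and correctly explains the hedge ``possible phase transition'' in case 4.
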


\begin{proof}
The method to prove cases 2 to 4 is similar the proof for case 1, so we will only provide the proof for case 1.  Remember that $\alpha(q_{1})=\alpha^{+}$ and $\alpha(q_{0})=\alpha^{-}.$  We use the decomposition of $[\alpha_{\inf},\alpha_{\sup}]$ in the following way.  Each $\alpha \in (\alpha_{\inf}, \alpha(q_{1}))$ satisfies $\alpha=\alpha(q)$ for $q > q_{1}.$  Proposition \ref{prop:A2} gives us that $f_{\mu}(\alpha)$ equals $T(q_{1})+q_{1}\alpha$ on $(\alpha^{+},\alpha_{\lim})$ and $T(q_{0})+q_{0}\alpha$ on $(\alpha_{\lim},\alpha^{-}).$  Each $\alpha \in (\alpha(q_{0}), \alpha(0))$ satisfies $\alpha=\alpha(q)$ for a unique $0< q \le q_{0}.$  Each $\alpha \in (\alpha(0), \alpha_{\sup})$ satisfies $\alpha=\alpha(q)$ for a unique $q<0.$

Hence, by Propositions \ref{prop:A1} and \ref{prop:A2}, the multifractal spectrum is analytic on $(\alpha_{\inf},\alpha^{+}),$ $(\alpha^{+},\alpha_{\lim}),$ $(\alpha_{\lim},\alpha^{-}),$ and $(\alpha^{-},\alpha_{\sup}).$  The increasing and decreasing behaviour of the multifractal spectrum is immediate from Proposition \ref{prop:ID1} and Lemma \ref{prop:A2}.  Thus, it follows that the multifractal spectrum has phase transitions at $\alpha(q_{1}),$ $\alpha_{\lim},$ and $\alpha(q_{0}).$ 
\end{proof}

The proofs for the following other cases for $Q=[q_{0}, q_{1}]$ are proved in the same way as Proposition \ref{prop:PCD1}.  In all of the following cases for $Q,$ the multifractal spectrum ranges from having no phase transitions if $\alpha^{-}=\alpha_{\lim}=\alpha^{+}$ to three phase transitions if $\alpha^{-}>\alpha_{\lim}>\alpha^{+}.$  Using the same techniques as the proof to Proposition \ref{prop:PCD1}, we get the behaviour as outlined below. 
\begin{enumerate}
\item If $Q$ is a closed interval, then the multifractal spectrum has $0$ to $3$ phase transitions.
\item If $Q$ is a point $q \in \mathbb{R},$ then $\alpha^{+}=\alpha^{-}.$  Hence, the multifractal spectrum has $0$ to $2$ phase transitions.
\item If $Q$ is the half-open interval $(-\infty,q_{1}),$ then $\alpha_{\lim} \ge \alpha^{+}.$
The multifractal spectrum would then have $0$ to $1$ phase transition.  
\item If $Q$ is the half-open interval $(q_{0},\infty),$ then $\alpha^{-} \ge \alpha_{\lim}.$
The multifractal spectrum would then have $0$ to $1$ phase transition.  
\item If $Q$ is the open interval $(-\infty,\infty),$ then $\alpha^{+}=\alpha^{-}=\alpha_{\lim}=\alpha_{\inf}=\alpha_{\sup}.$  Then, the multifractal spectrum is constant because $f_{\mu}(\alpha)=t_{\infty}.$
\end{enumerate}

\begin{proposition}\label{prop:NO1}
Let $Q=\emptyset.$  This yields no phase transitions for the multifractal spectrum.
\end{proposition}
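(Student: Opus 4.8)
The plan is to observe that the hypothesis $Q=\emptyset$ forces $Q^{\complement}=\mathbb{R}$, so that the analyticity results already proven for open subintervals of $Q^{\complement}$ now apply across the entire real line, leaving no room for a phase transition. First I would invoke Proposition~\ref{prop:B1}: since $Q^{\complement}=\mathbb{R}$, the temperature function $T(q)$ is well defined, strictly convex, and analytic on open subintervals of $Q^{\complement}$, hence analytic on all of $\mathbb{R}$. Consequently $\alpha(q)=-T'(q)$ is itself analytic and strictly decreasing on the whole line, as recorded in the lemma following Proposition~\ref{prop:A1}.

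Next I would pin down the domain of the spectrum. Because $Q=\emptyset$, there are no endpoints $q_{0},q_{1}$ and therefore no gap intervals $(\alpha^{+},\alpha_{\lim})$ or $(\alpha_{\lim},\alpha^{-})$ to treat; the decomposition of $(\alpha_{\inf},\alpha_{\sup})$ collapses to $\{\alpha(q):q\in Q^{\complement}\}=\{\alpha(q):q\in\mathbb{R}\}$. Since $\alpha(q)$ is continuous and strictly monotone with $\alpha_{\inf}=\inf_{q}\alpha(q)$ and $\alpha_{\sup}=\sup_{q}\alpha(q)$, its image is exactly the open interval $(\alpha_{\inf},\alpha_{\sup})$, so every $\alpha$ in the domain of $f_{\mu}$ is of the form $\alpha(q)$ for some $q\in Q^{\complement}$.

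Then I would apply Proposition~\ref{prop:A1} (equivalently Proposition~\ref{prop:AA1}): the multifractal spectrum $f_{\mu}(\alpha)$ is analytic and strictly concave on every open subinterval of $\{\alpha(q):q\in Q^{\complement}\}$, which we have just identified with all of $(\alpha_{\inf},\alpha_{\sup})$. Hence $f_{\mu}$ is analytic throughout its domain, so it possesses no non-analytic points, i.e.\ no phase transitions, which is the claim.

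I expect essentially no obstacle here, as the statement is a direct corollary of the analyticity propositions already in hand. The only point deserving a moment's care is verifying that $\{\alpha(q):q\in\mathbb{R}\}$ exhausts the open interval $(\alpha_{\inf},\alpha_{\sup})$; this follows from the continuity and strict monotonicity of $\alpha(q)$ together with the definitions of $\alpha_{\inf}$ and $\alpha_{\sup}$ as the infimum and supremum of $\alpha(q)$ over $\mathbb{R}$.
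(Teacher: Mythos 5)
Your proposal is correct and follows essentially the same route as the paper: $Q=\emptyset$ gives $Q^{\complement}=\mathbb{R}$, so $T(q)$ is analytic on all of $\mathbb{R}$ (via Proposition~\ref{prop:B1}, resting on Proposition~\ref{cor:AN1}), and then Proposition~\ref{prop:AA1} (equivalently Proposition~\ref{prop:A1}) yields analyticity of $f_{\mu}$ on $(\alpha_{\inf},\alpha_{\sup})$. Your additional check that $\{\alpha(q):q\in\mathbb{R}\}$ exhausts $(\alpha_{\inf},\alpha_{\sup})$, using continuity and strict monotonicity of $\alpha(q)$, is a detail the paper leaves implicit, but it does not change the argument.
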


\begin{proof}
Since $Q=\emptyset,$ $T(q)>\tilde{t}(q)$ for all $q \in \mathbb{R}.$  Hence, $\mathcal{P}(q\phi-t\psi)$ is analytic as a function of $t$ on an $\varepsilon-$neighbourhood of $T(q).$  This gives us that $T(q)$ is analytic on all of $\mathbb{R}$ (as we proved earlier).  Thus, by Proposition \ref{prop:AA1} the proposition follows.  
\end{proof}

Therefore, we have proven Theorem \ref{thm:M1}.  Theorem \ref{thm:M1} tells us that when $0<\alpha_{\lim}<\infty,$ the multifractal spectrum has $0$ to $3$ phase transitions.  We repeat the theorem for completeness:
\begin{theorem}
Let $\phi: \Sigma \rightarrow \mathbb{R}^{-}$  be a potential with Gibbs measure $\mu$ such that $P(\phi)<\infty.$ and $\psi: \Sigma \rightarrow \mathbb{R}^{+}$ be a metric potential.  Assume that $\phi$ and $\psi$ are non-cohomologous locally H\"older potentials such that $\alpha_{\lim}<\infty.$   
\begin{enumerate}
\item There exist intervals $A_{i}$ such that $f_{\mu}(\alpha)$ is analytic on each of their interiors.  
\item The interval $(\alpha_{\inf}, \alpha_{\sup})=\cup_{i=1}^{j}A_{i}$ such that $j=\{1,2,3,4\}.$ 
\item The multifractal spectrum is concave on $(\alpha_{\inf}, \alpha_{\sup}),$ has its maximum at $\alpha(0),$ and has zero to three phase transitions.
\end{enumerate}
\end{theorem}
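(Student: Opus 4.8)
The plan is to assemble the structural results already established into a case analysis driven entirely by the shape of $Q$. The backbone is the decomposition
\[(\alpha_{\inf},\alpha_{\sup})=\{\alpha(q):q\in Q^{\complement}\}\cup(\alpha^{+},\alpha_{\lim})\cup\{\alpha_{\lim}\}\cup(\alpha_{\lim},\alpha^{-})\cup\{\alpha^{-}\}\cup\{\alpha^{+}\}\]
read off from the definition of $\alpha(q)$, together with two facts: on any open subinterval of $\{\alpha(q):q\in Q^{\complement}\}$ the spectrum is analytic and strictly concave (Proposition \ref{prop:A1}), while on the transitional intervals $(\alpha^{+},\alpha_{\lim})$ and $(\alpha_{\lim},\alpha^{-})$ it is affine, hence analytic (Proposition \ref{prop:A2}). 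First I would record that global concavity is free: by Theorem \ref{thm:I1}, $f_{\mu}(\alpha)=\inf_{q\in\mathbb{R}}\{T(q)+q\alpha\}$ is an infimum of affine functions of $\alpha$ and is therefore concave on all of $(\alpha_{\inf},\alpha_{\sup})$. This immediately settles the concavity clause of part (3).

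Next I would identify the analytic pieces $A_{i}$ and count them. Under the running assumption $Q=[q_{0},q_{1}]$ with $0<q_{0}<q_{1}<\infty$, the outer pieces $(\alpha_{\inf},\alpha^{+})$ and $(\alpha^{-},\alpha_{\sup})$ arise from $Q^{\complement}$ and are strictly concave analytic by Proposition \ref{prop:A1}, while $(\alpha^{+},\alpha_{\lim})$ and $(\alpha_{\lim},\alpha^{-})$ are the affine pieces of Proposition \ref{prop:A2}. Taking the $A_{i}$ to be the closures of these pieces gives part (1). The number $j$ of distinct pieces, and hence the range $j\in\{1,2,3,4\}$ in part (2), is governed by whether $\alpha^{+},\alpha_{\lim},\alpha^{-}$ are distinct: the four subcases of Proposition \ref{prop:PCD1}, ranging from strict $\alpha^{-}>\alpha_{\lim}>\alpha^{+}$ down to full collapse $\alpha^{-}=\alpha_{\lim}=\alpha^{+}$, realise $j=4,3,3,2$ respectively, and the remaining shapes of $Q$ (a point, a half-open infinite interval, the full line, or the empty set, via the enumeration after Proposition \ref{prop:PCD1} and via Proposition \ref{prop:NO1}) bring the count down to $j=1$. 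In particular the spectrum has zero to three phase transitions.

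For the maximum in part (3), I would use the identity $\tfrac{\mathrm{d}}{\mathrm{d}\alpha}f_{\mu}(\alpha(q))=q$ from the proof of Proposition \ref{prop:A1} together with Proposition \ref{prop:ID1}: $f_{\mu}$ increases where $q<0$ and decreases where $q>0$, so its maximum is attained at $\alpha(0)$ — here $0\in Q^{\complement}$ precisely because $0<q_{0}$ — with value $f_{\mu}(\alpha(0))=T(0)$ by Equation (\ref{eq:G}). The phase transitions are then exactly the junction points $\alpha^{+}=\alpha(q_{1})$, $\alpha_{\lim}$, and $\alpha^{-}=\alpha(q_{0})$ where consecutive analytic pieces meet.

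The main obstacle is not any single estimate but the verification that each junction is a genuine non-analytic point rather than an accidental match, and the careful handling of the degenerate collapses. At $\alpha_{\lim}$ the two affine pieces carry slopes $q_{1}$ and $q_{0}$ with $q_{0}\neq q_{1}$, so the first derivative jumps; at each of $\alpha^{\pm}$ an affine piece (second derivative $0$) meets a strictly concave piece (second derivative $-1/T''(q)<0$ by Proposition \ref{prop:A1}), so the second derivative jumps. It is precisely the non-cohomology of $\phi$ and $\psi$, giving strict convexity $T''>0$ of Proposition \ref{prop:B1}, that forces these jumps and rules out removable kinks. I would also need to trace the degenerate cases, where $\alpha^{+}=\alpha_{\lim}$ or $\alpha^{-}=\alpha_{\lim}$ makes an affine piece vanish and absorbs the corresponding transition, lowering the count; these boundary bookkeeping steps, complicated by the fact that $\alpha(q)$ is orientation-reversing, are where the argument is most error-prone.
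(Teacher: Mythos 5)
Your proposal follows essentially the same route as the paper: the same decomposition of $(\alpha_{\inf},\alpha_{\sup})$, Proposition \ref{prop:A1} for the strictly concave analytic pieces, Proposition \ref{prop:A2} for the affine pieces, Proposition \ref{prop:ID1} for the maximum at $\alpha(0)$, and the case analysis driven by the shape of $Q$ (Proposition \ref{prop:PCD1}, the subsequent enumeration of the other forms of $Q$, and Proposition \ref{prop:NO1} for $Q=\emptyset$). Your two additions --- deducing global concavity from $f_{\mu}(\alpha)=\inf_{q\in\mathbb{R}}\{T(q)+q\alpha\}$ being an infimum of affine functions, and the explicit verification that each junction is genuinely non-analytic (slope jump $q_{0}\neq q_{1}$ at $\alpha_{\lim}$; an affine piece meeting a strictly concave piece at $\alpha^{\pm}$, which non-cohomology rules out as a removable kink) --- are correct refinements of steps the paper asserts without detail.
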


We now analyse the case when $\alpha_{\lim}=\infty.$

\begin{theorem}
Let $\phi: \Sigma \rightarrow \mathbb{R}^{-}$  be a potential with Gibbs measure $\mu$ such that $\mathcal{P}(\phi)<\infty$ and $\psi: \Sigma \rightarrow \mathbb{R}^{+}$ be a metric potential.  Assume that $\phi$ and $\psi$ are non-cohomologous locally H\"older potentials such that $\alpha_{\lim}=\infty.$   
\begin{enumerate}
\item There exist intervals $A_{i}$ such that $f_{\mu}(\alpha)$ is analytic on each of their interiors.  
\item The interval $(\alpha_{\inf}, \alpha_{\sup})=\cup_{i=1}^{j}A_{i}$ such that $j=\{1,2\}.$  
\item The multifractal spectrum is concave on $(\alpha_{\inf}, \alpha_{\sup}),$ is equal to its maximum $f_{\mu}(\alpha(0))$ on $(\alpha(0),\alpha_{\sup}),$ and has zero to one phase transition.
\end{enumerate}
\end{theorem}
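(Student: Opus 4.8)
The plan is to re-run the argument of Theorem~\ref{thm:M1} essentially verbatim; the only input that changes is the shape of $\tilde t(q)$, and hence of $Q$, when $\alpha_{\lim}=\infty$. First I would recompute $\tilde t(q)$ by repeating the partition-function reduction used in Proposition~\ref{prop:L1}: for fixed $q$ and $t$, finiteness of $\mathcal P(q\phi-t\psi)$ is equivalent to convergence of $\sum_{n}\exp\bigl((q\phi-t\psi)(\bar n)\bigr)$, whose exponent factors as $q\phi(\bar n)-t\psi(\bar n)=\psi(\bar n)\bigl(-q\,\phi(\bar n)/(-\psi(\bar n))-t\bigr)$. Since $\psi(\bar n)>0$ and $\phi(\bar n)/(-\psi(\bar n))\to+\infty$, for $q>0$ the bracket tends to $-\infty$ and the series converges for every $t$; for $q<0$ the bracket tends to $+\infty$ and the series diverges for every $t$; and for $q=0$ the series is $\sum_n\exp(-t\psi(\bar n))$, finite exactly for $t>t_\infty$. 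This yields $\tilde t(q)=-\infty$ on $q>0$, $\tilde t(0)=t_\infty$, and $\tilde t(q)=+\infty$ on $q<0$: the decreasing line of Proposition~\ref{prop:L1} has degenerated into a vertical ray through $(0,t_\infty)$.

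From this degenerate $\tilde t$ I would read off $Q=\{q:T(q)=\tilde t(q)\}$. For $q>0$ the function $T(q)$ is finite while $\tilde t(q)=-\infty$, so no positive $q$ lies in $Q$; for $q<0$ there is no $t$ with $\mathcal P(q\phi-t\psi)\le 0$, so $T(q)=+\infty=\tilde t(q)$ and every negative $q$ lies in $Q$; the point $q=0$ lies in $Q$ iff $T(0)=t_\infty$. Hence $Q$ is a half-infinite interval with right endpoint $q_1=0$, i.e.\ exactly the case ``$Q=(-\infty,q_1)$'' already catalogued in the enumeration following Proposition~\ref{prop:PCD1}. In particular $\alpha^{+}=\alpha(q_1)=\alpha(0)=-T'(0^{+})$ is finite, $\alpha^{-}=\alpha_{\lim}=\infty$, and $\alpha_{\sup}=\sup_q\alpha(q)\ge\alpha_{\lim}=\infty$.

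The three conclusions then follow from the machinery already in place. On $Q^{\complement}=(0,\infty)$ Proposition~\ref{prop:A1} makes $f_\mu$ analytic and strictly concave on $\{\alpha(q):q>0\}=(\alpha_{\inf},\alpha(0))$, which is the interval $A_1$; since $\tfrac{d}{d\alpha}f_\mu(\alpha(q))=q$, this branch increases up to its maximum at $\alpha(0)$, where $q=0$. On $(\alpha^{+},\alpha_{\lim})=(\alpha(0),\alpha_{\sup})$ Proposition~\ref{prop:A2} makes $f_\mu$ linear with slope $q_1=0$, so $f_\mu\equiv T(0)+q_1\alpha=T(0)=f_\mu(\alpha(0))$ there, i.e.\ constant and equal to the maximum; this is the interval $A_2$. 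Thus $j\in\{1,2\}$, concavity on $(\alpha_{\inf},\alpha_{\sup})$ is inherited from the Fenchel-transform description of Theorem~\ref{thm:I1}, and the only possible non-analytic point is the junction $\alpha(0)$, giving zero to one phase transition.

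I expect the tail estimate for $\tilde t(q)$ to be the main obstacle: one must promote the limiting statement $\alpha_{\lim}=\infty$ to control of $\sum_n\exp((q\phi-t\psi)(\bar n))$ simultaneously for all $t$, using the local H\"older bounds~(\ref{eq:Variation}) to move between $\bar n$ and a generic $x\in[n]$ and the finiteness of $t_\infty$ to pin down the borderline $q=0$. The remaining, softer, point is to decide when $\alpha(0)$ is a genuine phase transition: the left branch has $f''_\mu(\alpha(0)^{-})=-1/T''(0^{+})<0$ while the right branch is flat, so the two join analytically only in the degenerate boundary case $T(0)=t_\infty$ (where $0\in Q$ and $T''(0^{+})$ may blow up), which is precisely the dichotomy between zero and one phase transition.
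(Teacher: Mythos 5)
Your proposal is correct, and it follows the paper's overall strategy of reusing the machinery built for Theorem~\ref{thm:M1}; however, it is substantially more complete than the paper's own proof, which consists of the single line ``apply Propositions~\ref{prop:ID1}, \ref{prop:A2}, and \ref{prop:NO1} because they do not need $\alpha_{\lim}<\infty$.'' What you add is the general computation of $\tilde{t}(q)$ under $\alpha_{\lim}=\infty$ (namely $\tilde{t}(q)=+\infty$ for $q<0$, $\tilde{t}(0)=t_{\infty}$, $\tilde{t}(q)=-\infty$ for $q>0$), and hence the identification $Q=(-\infty,0)$ or $(-\infty,0]$. The paper never does this in the proof itself; it only carries out the identical computation for one concrete example (Section 6.2, where $Q=(-\infty,0)$), leaving implicit why $j\in\{1,2\}$ and why the spectrum is constant equal to $f_{\mu}(\alpha(0))=T(0)$ to the right of $\alpha(0)$. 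Your derivation pins this down: the right endpoint of $Q$ is $q_{1}=0$, so Proposition~\ref{prop:A2} gives slope $q_{1}=0$ on $(\alpha^{+},\alpha_{\lim})=(\alpha(0),\alpha_{\sup})$, which is exactly the constancy claim in the statement. Your analysis also exposes a small infelicity in the paper: since $Q\supseteq(-\infty,0)$ whenever $\alpha_{\lim}=\infty$ and $t_{\infty}$ is finite, the set $Q$ is never empty in this regime, so the citation of Proposition~\ref{prop:NO1} in the paper's proof is vacuous; the operative case is always the half-infinite interval case from the enumeration after Proposition~\ref{prop:PCD1}.

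Two minor caveats, neither fatal. First, your divergence argument for $q<0$ (terms of $\sum_{n}\exp\bigl((q\phi-t\psi)(\bar{n})\bigr)$ eventually exceed $1$) tacitly needs $\psi(\bar{n})$ not to decay to $0$; this is automatic here because finiteness of $t_{\infty}$ forces $\sum_{n}\exp(-t\psi(\bar{n}))<\infty$ for $t>t_{\infty}$, hence $\psi(\bar{n})\rightarrow\infty$, but the step deserves a sentence. Second, your closing characterisation of exactly when $\alpha(0)$ fails to be a phase transition is heuristic: an analytic function that is constant on $(\alpha(0),\alpha(0)+\varepsilon)$ is constant on any interval of analyticity, so with $\phi,\psi$ non-cohomologous (strict concavity of the left branch) the junction can only be smooth when the left branch degenerates, e.g.\ $\alpha(0)=\alpha_{\inf}$. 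Since the theorem claims only ``zero to one phase transition,'' this looseness costs you nothing, and it is no looser than the paper itself.
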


\begin{proof}
Apply Propositions \ref{prop:ID1}, \ref{prop:A2}, and \ref{prop:NO1} because they do not need $\alpha_{\lim}<\infty.$
\end{proof}

We give a map that generates examples of phase transitions for $f_{\mu}(\alpha).$

\section{Adaptation For the Gauss Map}
Up until this point, we have only considered the multifractal spectrum with respect to a locally H\"older function, defined by a general expanding map.  We give a specific expanding map as follows.

\subsection{The Gauss Map}
\begin{definition}
The {\em Gauss map} $G: [0,1]\setminus{\mathbb{Q}} \rightarrow [0,1]\setminus{\mathbb{Q}}$ is defined by
$$G(x)=\frac{1}{x} \mod{1}.$$
\end{definition}

The inverse branches of $G$ are similar to a translated Gauss map.

\begin{definition}
Let $I_{b}:=[\frac{1}{b+1},\frac{1}{b}]\setminus{\mathbb{Q}}.$  Define $G_{b}:[0,1]\setminus{\mathbb{Q}} \rightarrow I_{b}$ as the {\em inverse branch} $(G|_{I_{b}})^{-1}$ of the Gauss map, which is
$$G_{b}(x)=\frac{1}{x+b}$$ 
for $x \in [0,1]\setminus{\mathbb{Q}}.$  For each $\tilde{b}=(b_{1},b_{2},...,b_{n}) \in \mathbb{N}^{n},$ the composition of these inverse branches is
$$G_{\tilde{b}}:=G_{b_{1}} \circ G_{b_{2}} \circ \cdots \circ G_{b_{n}}.$$
\end{definition}

From this point, we use the full shift  $\Sigma=\mathbb{N}^{\mathbb{N}}.$  The coding map we take between $\Sigma$ and $[0,1]$ is the continued fraction map.

\begin{definition}
The {\em coding map},
$\pi: \Sigma \rightarrow [0,1]\setminus{\mathbb{Q}},$ is defined as follows.  For each sequence $a \in \Sigma$ such that $a=(a_{1}(x),a_{2}(x),...),$ the map $\pi$ is
$$\pi(a):=\frac{1}{a_{1}(x)+\frac{1}{a_{2}(x)+\frac{1}{a_{3}(x)+\cdots}}}$$
such that 
\begin{enumerate}
\item $x=\pi(a)$
\item for each $i,k \in \mathbb{N},$ $G^{i-1}(x) \in (\frac{1}{k+1},\frac{1}{k})$ yields that $a_{i}(x)=k.$
\end{enumerate}
\end{definition}

We use the Gauss map to define the potential $\psi.$

\subsection{Thermodynamic Formalism}\hspace*{\fill} \par
By symbolic coding, we provide definitions for our locally H\"older potentials $\phi$ and $\psi.$

\begin{definition}
Define the locally H\"older potential $\psi:\Sigma \rightarrow \mathbb{R}^{+}$ on each  word $x \in \Sigma$ by
$$\psi(x)=\log|G'(\pi(x))|.$$
\end{definition}

Again, we assume that $\phi: \Sigma \rightarrow \mathbb{R}^{-}$ is a locally Holder potential on $\Sigma$ such that $0 \le \mathcal{P}(\phi)<\infty.$  Again, we assume that $\phi$ and $\psi$ are non-cohomologous to each other.  Using the potential $\psi,$ the following lemma relates the Gauss map to the diameter of any cylinder in $\Sigma.$  We use the following result to prove our main theorem.  

\begin{lemma}\label{lemma:D1}
Let $\mu$ be an ergodic $G-$invariant measure on $[0,1],$ $\nu$ be an ergodic $\sigma-$invariant measure on $\Sigma,$ and let $\psi(x)=\log|G'(\pi(x))|$ for each word $x \in \Sigma.$  Then, for $\mu-$a.e. $z=\pi(x) \in [0,1]$ such that $x=(x_{1},x_{2},...) \in \Sigma,$ 
$$-\int_{\Sigma} \psi\, \mathrm{d}\nu=-\int_{0}^{1} \log{|G'|}\, \mathrm{d}\mu=\lim\limits_{n \rightarrow \infty}-\frac{1}{n}\sum_{i=0}^{n-1}\log{|G'(G^{i}(z))|} $$  
$$\text{and} \lim\limits_{n \rightarrow \infty}-\frac{1}{n}\sum_{i=0}^{n-1}\log{|G'(G^{i}(z))|}=\lim\limits_{n \rightarrow \infty}-\frac{1}{n}\log|(G^{n})'(z)|=\lim\limits_{n \rightarrow \infty}\frac{1}{n}\log{|[x_{1},..,x_{n}]|}.$$
Furthermore, our choice of $\psi$ is a metric potential.
\end{lemma}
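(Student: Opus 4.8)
```latex
The plan is to prove a chain of equalities, each of which follows from a standard tool, and then verify the metric-potential property separately. The statement really bundles three distinct claims: (i) the space average $-\int_\Sigma \psi\,\mathrm{d}\nu$ equals the interval-space integral $-\int_0^1 \log|G'|\,\mathrm{d}\mu$ and equals the Birkhoff time average of $-\frac{1}{n}\sum \log|G'(G^i(z))|$; (ii) that Birkhoff average equals both $-\frac{1}{n}\log|(G^n)'(z)|$ and $\frac{1}{n}\log|[x_1,\dots,x_n]|$ in the limit; and (iii) $\psi$ is a metric potential. I would handle these in order, since the first is purely ergodic-theoretic, the second is a computation with the chain rule and the geometry of cylinders, and the third repackages the second.

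First I would establish (i). The identity $-\int_\Sigma \psi\,\mathrm{d}\nu = -\int_0^1 \log|G'|\,\mathrm{d}\mu$ holds because $\psi = \log|G'(\pi(x))| = (\log|G'|)\circ \pi$, so $\nu$ pushes forward to $\mu$ under the coding map $\pi$ (the measures are identified via $\pi$, which conjugates $\sigma$ and $G$), and the integral of a pushforward is the integral of the composition. The equality with the limit $\lim_{n\to\infty} -\frac{1}{n}\sum_{i=0}^{n-1}\log|G'(G^i(z))|$ is then exactly the Birkhoff ergodic theorem applied to the ergodic, $G$-invariant measure $\mu$ with observable $\log|G'|$, giving convergence of the time average to the space average for $\mu$-a.e.\ $z$. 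Here I must note the standard integrability caveat: $\log|G'|$ fails to be bounded because $G'$ blows up near $0$, so I would invoke that $\log|G'| \in L^1(\mu)$ for the relevant Gibbs/invariant measures (which is where the hypothesis $\mathcal{P}(\phi)<\infty$ and local Hölder control enter), rather than treat $\psi$ as bounded.

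Next I would prove (ii). The middle equality $-\frac{1}{n}\sum_{i=0}^{n-1}\log|G'(G^i(z))| = -\frac{1}{n}\log|(G^n)'(z)|$ is an immediate application of the chain rule: $(G^n)'(z) = \prod_{i=0}^{n-1} G'(G^i(z))$, so taking logs turns the product into the sum. The final equality, relating this to $\frac{1}{n}\log|[x_1,\dots,x_n]|$, is where the geometry of the Gauss map enters. Since $\pi$ codes $z$ by its continued-fraction digits, the cylinder $[x_1,\dots,x_n]$ corresponds under $\pi$ to the fundamental interval on which the first $n$ digits are prescribed, whose length is $|G_{\tilde b}([0,1])|$ for $\tilde b = (x_1,\dots,x_n)$; by the mean value theorem this length is comparable to $|(G^n)'(z)|^{-1}$ up to the bounded distortion constant of the Gauss map. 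Taking logs, dividing by $n$, and letting $n\to\infty$ kills the bounded distortion constant, yielding $\lim \frac{1}{n}\log|[x_1,\dots,x_n]| = \lim -\frac{1}{n}\log|(G^n)'(z)|$.

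Finally, for (iii) I would show $\psi(x)=\log|G'(\pi(x))|$ is a metric potential directly from the bounded-distortion estimate above: the comparability $\frac{1}{C}|(G^n)'(z)|^{-1} \le |[x_1,\dots,x_n]| \le C|(G^n)'(z)|^{-1}$ is precisely the inequality $\frac{1}{C}\prod_{j=0}^{n-1}(\exp\psi(\sigma^j(y)))^{-1} \le |[x_1,\dots,x_n]| \le C\prod_{j=0}^{n-1}(\exp\psi(\sigma^j(y)))^{-1}$ demanded by the definition, once we observe $\prod_{j=0}^{n-1}\exp\psi(\sigma^j(y)) = \prod_{j=0}^{n-1}|G'(G^j(\pi(y)))| = |(G^n)'(\pi(y))|$, together with the standard fact that $\log|G'|$ is locally Hölder on $\Sigma$ and positive. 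I expect the main obstacle to be the bounded-distortion step: one must show that the Gauss map has uniformly bounded distortion on cylinders despite $G'$ being unbounded, which relies on the Renyi/Gauss distortion estimate controlling $|G''/(G')^2|$; this is the one place where genuine properties of the Gauss map (rather than formal manipulation) are needed, and it simultaneously underwrites both the final equality in (ii) and the metric-potential claim in (iii).
```
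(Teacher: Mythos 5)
Your proposal is correct and follows the same skeleton as the paper's own proof: the mean value theorem to relate $|[x_{1},\dots,x_{n}]|$ to $|(G^{n})'|$, the chain rule to convert $\log|(G^{n})'(z)|$ into the Birkhoff sum $\sum_{i=0}^{n-1}\log|G'(G^{i}(z))|$, and the Birkhoff ergodic theorem to identify the a.e.\ limit with the integral. The difference is one of rigor rather than route, and it is worth recording. The paper's proof applies the mean value theorem to produce a \emph{single} point $z\in\pi([x_{1},\dots,x_{n}])$ at which $(G^{n})'(z)=|[x_{1},\dots,x_{n}]|^{-1}$ holds exactly, and then writes the identity $\sum_{i=0}^{n-1}\psi(\sigma^{i}(x))=-\log|[x_{1},\dots,x_{n}]|$ as if it held for the generic point $x$ of the cylinder, concluding both the a.e.\ limit and the metric-potential property from it; in effect it takes the comparability constant to be $C=1$. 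That step is loose: the metric-potential definition quantifies over \emph{every} $y$ in the cylinder, and the Birkhoff average is taken at a $\mu$-generic $z$, not at the special MVT point. Your insertion of the bounded-distortion estimate (Rényi's condition, $|G''|/|G'|^{2}\le 2$ for the Gauss map) is exactly what is needed to transport the MVT identity to an arbitrary point of the cylinder up to a uniform constant $C$, which then disappears in the limit after dividing by $n$; this simultaneously justifies the final equality of the lemma and the metric-potential claim, which in the paper is asserted rather than proved. You also make explicit two hypotheses the paper leaves implicit: that $\mu=\nu\circ\pi^{-1}$ (without which the first equality is simply false, since the statement never ties $\mu$ to $\nu$) and that $\log|G'|\in L^{1}(\mu)$, which is required before Birkhoff can be invoked because $G'$ blows up at $0$. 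So while the approach is the same, your write-up closes genuine gaps in the paper's argument and would be the preferable version to include.
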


\begin{proof}
By the mean value theorem, there exists $z \in \pi([x_{1},...,x_{n}])$ such that $(G^{n})'(z)=(|[x_{1},...,x_{n}]|)^{-1}.$

By the chain rule, for $z \in [0,1],$ it follows that \[(G^{n})'(z)=G'(G^{n-1}(z))G'(G^{n-2}(z)) \cdots G'(z).\]

Combining both the diameter and chain rule equalities for $(G^{n})'(z),$ we get that
\begin{equation}\label{eq:1}
\sum_{i=0}^{n-1}\psi(\sigma^{i}(x))=\log{|(G^{n})'(z)|}=-\log{|[x_{1},...,x_{n}]|}=\sum_{i=0}^{n-1}\log{|G'(G^{i}(z))|}
\end{equation}
Hence,
\[|[x_{1},...,x_{n}]|=\exp\left(-\sum_{i=0}^{n-1}\psi(\sigma^{i}(x))\right).\]
Then, for $\mu-$a.e.$z=\pi(x) \in [0,1],$
the result follows from the Birkhoff ergodic theorem and $\psi$ is a metric potential.
\end{proof}

Since we have an expression for $\log{|[x_{1},...,x_{n}]|},$ let us again consider subsets of $\Sigma$ with symbolic dimension $\alpha.$

\section{The Multifractal Spectrum and $X_{\alpha}$}
We now revisit the set $X_{\alpha}^{s}$ and recall the definition of symbolic dimension.  Again, we take $\mu$ as the Gibbs measure for the potential $\phi: \Sigma \rightarrow \mathbb{R}^{-}.$  There is also a measure $\bar{\mu}=\mu \circ \pi^{-1}$ on $[0,1].$

\begin{definition}
For each $\alpha \in [\alpha_{\inf},\alpha_{\sup}],$ there exists a word $x \in [x_{1},...,x_{n}] \subset \Sigma$ has \text{\em symbolic dimension} $\alpha$ if
\[
d_{\mu}(x):=\lim\limits_{n \rightarrow \infty}\frac{\log{\mu([x_{1},...,x_{n}])}}{\log{|[x_{1},...,x_{n}]|}}=\alpha.
\]
\end{definition}

Define the level set
$$\pi(X_{\alpha}^{s})=\pi\left(\left\{x \in \Sigma: \lim\limits_{n \rightarrow \infty}\frac{\log{\mu([x_{1},...,x_{n}])}}{\log{|[x_{1},...,x_{n}]|}}=\alpha\right\}\right).$$

There is a similar type of local dimension for $[0,1].$
\begin{definition}
For $r>0,$ let $B(x,r)$ be a ball centered at $x \in [0,1].$  Then, for each $\alpha \in [\alpha_{\inf},\alpha_{\sup}],$ $x \in [0,1]$ has \text{\em local dimension} $\alpha$ if
\[
\lim\limits_{r \rightarrow 0}\frac{\log{\bar{\mu}(B(x,r))}}{\log{r}}=\alpha.
\]
\end{definition}

Define the set
$$X_{\alpha}=\{x \in [0,1]\setminus{\mathbb{Q}}: \lim\limits_{r \rightarrow 0}\frac{\log{\bar{\mu}(B(x,r))}}{\log{r}}=\alpha\}.$$ 

We now give an important result by Iommi \cite{iommi2005multifractal} (Page 1891, Theorem 3.7).

\begin{proposition}\label{prop:I2}
For each $\alpha \in (\alpha_{\inf}, \alpha_{\sup}),$ $\pi(X_{\alpha}^{s})$ satisfies
$$\dim_{H}(\pi(X_{\alpha}^{s}))=f_{\mu}(\alpha)=\inf\limits_{q \in \mathbb{R}}\{T(q)+q\alpha\}.$$
\end{proposition}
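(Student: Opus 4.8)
The plan is to reduce the statement to Theorem \ref{thm:I1}, which already supplies the second equality $f_\mu(\alpha)=\inf_{q\in\mathbb{R}}\{T(q)+q\alpha\}$ together with $f_\mu(\alpha)=\dim_H(X_\alpha^s)$, where this Hausdorff dimension is computed in the symbolic space $(\Sigma,d)$. Consequently the only genuinely new content is the first equality $\dim_H(\pi(X_\alpha^s))=\dim_H(X_\alpha^s)$: that the coding map $\pi$ preserves Hausdorff dimension when we transport $X_\alpha^s$ from $\Sigma$ into $[0,1]$.

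First I would record the geometric dictionary given by Lemma \ref{lemma:D1}: for each cylinder the image $\pi([x_1,\dots,x_n])$ is an interval of Euclidean length $\exp(-\sum_{i=0}^{n-1}\psi(\sigma^i(x)))$, which by the metric potential property is comparable, up to the uniform constant $C$, to the diameter $|[x_1,\dots,x_n]|$ of the same cylinder in the metric $d$. Since the continued fraction expansion is unique on the irrationals, $\pi$ is a bijection $\Sigma\to[0,1]\setminus\mathbb{Q}$ carrying cylinders to these intervals. The upper bound $\dim_H(\pi(X_\alpha^s))\le\dim_H(X_\alpha^s)$ is then immediate: any cover of $X_\alpha^s$ by cylinders $\{C_i\}$ pushes forward to a cover $\{\pi(C_i)\}$ of $\pi(X_\alpha^s)$ with $|\pi(C_i)|\le C\,|C_i|$, so $\sum_i|\pi(C_i)|^s\le C^s\sum_i|C_i|^s$ for every $s$, and comparing the resulting Hausdorff measures yields the inequality.

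The reverse inequality $\dim_H(X_\alpha^s)\le\dim_H(\pi(X_\alpha^s))$ is the main obstacle. The naive hope that $\pi$ is bi-Lipschitz fails, since adjacent continued fraction cylinders share endpoints, so two sequences whose symbols first differ at a late coordinate may have images that are arbitrarily close in $[0,1]$. To bypass this I would use the bounded distortion of the Gauss map: because $G$ is uniformly expanding and $\psi=\log|G'\circ\pi|$ is locally H\"older, the standard bounded-distortion estimate shows that any interval of length $r$ meets only a uniformly bounded number of cylinder images of comparable length. Hence an economical cover of $\pi(X_\alpha^s)$ by small intervals can be refined to a cover by cylinder images of the same order of size, changing the $s$-dimensional sums by at most a fixed multiplicative constant; pulling these back through $\pi$ produces a comparable cylinder cover of $X_\alpha^s$ in $\Sigma$, giving the inequality. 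Together with the upper bound this yields $\dim_H(\pi(X_\alpha^s))=\dim_H(X_\alpha^s)=f_\mu(\alpha)$.

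As an alternative route for the lower bound I would apply the mass distribution principle. For $\alpha=\alpha(q)$ with $q\in Q^{\complement}$, Theorem \ref{thm:MR1} provides the Gibbs state $\mu_q$ with $\mu_q(X_{\alpha(q)}^s)=1$, whose push-forward $\bar\mu_q=\mu_q\circ\pi^{-1}$ is carried by $\pi(X_\alpha^s)$. Estimating $\bar\mu_q(B(z,r))$ on the scale of cylinder images via Lemma \ref{lemma:D1} and the Gibbs property of $\mu_q$ gives a Frostman bound $\bar\mu_q(B(z,r))\le C\,r^{f_\mu(\alpha)-\varepsilon}$, forcing $\dim_H(\pi(X_\alpha^s))\ge f_\mu(\alpha)-\varepsilon$ for every $\varepsilon>0$. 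Either way I expect the bounded-distortion refinement of covers to be the decisive technical step, with the remaining ingredients supplied by the diameter comparison of Lemma \ref{lemma:D1} and by Theorem \ref{thm:I1}.
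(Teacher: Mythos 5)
You should first be aware that the paper contains no proof of this proposition to compare against: it is quoted verbatim as Theorem~3.7 (page 1891) of Iommi \cite{iommi2005multifractal}, so your attempt must be judged against the arguments actually used in that literature. Your reduction of the statement to Theorem~\ref{thm:I1} plus the single claim $\dim_{H}(\pi(X_{\alpha}^{s}))=\dim_{H}(X_{\alpha}^{s})$ is sensible, and your upper-bound half is correct: in the metric $d$ every set of diameter $\delta<1$ is contained in a cylinder of diameter at most $\delta$, so cylinder covers compute Hausdorff measure in $\Sigma$, and by Lemma~\ref{lemma:D1} together with the metric-potential property each image $\pi(C)$ is an interval with $\mathrm{diam}(\pi(C))\le C'|C|$; pushing cylinder covers forward gives $\dim_{H}(\pi(X_{\alpha}^{s}))\le\dim_{H}(X_{\alpha}^{s})$.

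The gap is in the reverse inequality, and it sits exactly where the countable alphabet matters. Your refinement step --- replacing an efficient interval cover of $\pi(X_{\alpha}^{s})$ by cylinder images ``of the same order of size'' at bounded multiplicative cost --- is impossible near points where infinitely many cylinders accumulate, and bounded distortion cannot repair this, because it is a phenomenon of infinitely many branches rather than of distortion. Concretely, take $I=(0,r)$: prepending an arbitrary first digit to a sequence changes numerator and denominator of the defining ratio by bounded amounts, so it does not change $d_{\mu}$, and hence $\pi(X_{\alpha}^{s})$ accumulates at $0$. The cylinder images meeting $I$ are $\pi([n])=(\frac{1}{n+1},\frac{1}{n})$ for $n\gtrsim 1/r$ and their subcylinders, all of length $\lesssim r^{2}$; in particular \emph{no} cylinder of length comparable to $r$ meets $I$, and since no cylinder other than $\Sigma$ itself contains two distinct $[n]$, any cylinder cover of $\pi^{-1}(I)\cap X_{\alpha}^{s}$ must use infinitely many of these much smaller cylinders. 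The cheapest such choice, the $[n]$ themselves, has $\sum_{n\gtrsim 1/r}|[n]|^{s}\asymp r^{2s-1}$, which is not $O(r^{s})$ for $s<1$ and diverges for $s\le\frac{1}{2}$, so the refined sums cannot be controlled by $\sum_{j}r_{j}^{s}$. Your fallback via the mass distribution principle is sound in spirit --- $\mathcal{P}(q\phi-T(q)\psi)=0$, the Gibbs property of $\mu_{q}$, and Lemma~\ref{lemma:D1} do give $\bar{\mu}_{q}$ local dimension $T(q)+q\alpha(q)=f_{\mu}(\alpha(q))$ almost everywhere --- but it only reaches $\alpha=\alpha(q)$ with $q\in Q^{\complement}$. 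For $\alpha$ in the linear segments $(\alpha^{+},\alpha_{\lim})\cup(\alpha_{\lim},\alpha^{-})$, which exist precisely in the phase-transition regime this paper studies, there is no Gibbs state with exponent $\alpha$, and the lower bound requires the approximation by compact invariant (finite-alphabet) subsystems that Iommi uses and that the paper's own outline of Proposition~\ref{thm:G1} invokes (the Pesin--Weiss inequality together with monotone convergence of the functions $T_{n}(q)$). As written, your argument establishes the lower bound only on $\{\alpha(q):q\in Q^{\complement}\}$.
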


We have a similar result for $X_{\alpha}.$

\begin{proposition}\label{thm:G1}
For each $\alpha \in (\alpha_{\inf}, \alpha_{\sup}),$ we have that
$$f_{\mu}(\alpha)=\dim_{H}(X_{\alpha}).$$
\end{proposition}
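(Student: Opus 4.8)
The plan is to transfer the Hausdorff dimension computation from the symbolic level set to the geometric level set by way of the coding map $\pi$, leveraging the fact that we have already established the identity $\dim_{H}(\pi(X_{\alpha}^{s}))=f_{\mu}(\alpha)$ in Proposition \ref{prop:I2}. The key observation is that $X_{\alpha}$ and $\pi(X_{\alpha}^{s})$ differ only on a set that does not affect Hausdorff dimension, so the heart of the argument is a comparison between the geometric local dimension of $\bar{\mu}$ on $[0,1]$ and the symbolic dimension of $\mu$ on $\Sigma$.

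First I would recall that $\bar{\mu}=\mu\circ\pi^{-1}$, so that for a cylinder $[x_{1},\dots,x_{n}]$ we have $\bar{\mu}(\pi([x_{1},\dots,x_{n}]))=\mu([x_{1},\dots,x_{n}])$. Next I would invoke Lemma \ref{lemma:D1}, which gives the crucial geometric estimate $|[x_{1},\dots,x_{n}]|=\exp\left(-\sum_{i=0}^{n-1}\psi(\sigma^{i}(x))\right)$ and confirms that $\psi=\log|G'\circ\pi|$ is a metric potential. This means the diameter of the cylinder $\pi([x_{1},\dots,x_{n}])$ in $[0,1]$ is comparable (up to the bounded distortion constant $C$ from the metric potential definition) to $|[x_{1},\dots,x_{n}]|$. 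Consequently, for $z=\pi(x)$ with $x\in X_{\alpha}^{s}$, one can sandwich a ball $B(z,r)$ between consecutive cylinder images exactly as in the proof of Proposition \ref{prop:AAA}: choosing $m$ so that $|\pi([x_{1},\dots,x_{m+1}])|\le r\le|\pi([x_{1},\dots,x_{m}])|$, the ratio $\frac{\log\bar{\mu}(B(z,r))}{\log r}$ is squeezed between the two cylinder ratios.

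The central step is therefore to show that the local dimension limit on $[0,1]$ equals the symbolic dimension limit, precisely on the good set $\bar{X}$ of full $\mu$-measure identified in Proposition \ref{prop:AAA}. On $\bar{X}$ the condition $\lim_{m\to\infty}\frac{\log|[x_{1},\dots,x_{m}]|}{\log|[x_{1},\dots,x_{m+1}]|}=1$ holds, which is exactly what forces the sandwiching inequality to collapse to an equality; combined with the bounded-distortion comparison of cylinder images to balls, this yields $\lim_{r\to 0}\frac{\log\bar{\mu}(B(z,r))}{\log r}=d_{\mu}(x)$ for $z=\pi(x)$. Hence $\pi(X_{\alpha}^{s})$ and $X_{\alpha}$ agree up to the $\bar{\mu}$-null set $\pi(\Sigma\setminus\bar{X})$, and since $\pi$ is a bi-Lipschitz-on-cylinders coding that does not distort Hausdorff dimension, I would conclude $\dim_{H}(X_{\alpha})=\dim_{H}(\pi(X_{\alpha}^{s}))=f_{\mu}(\alpha)$ by Proposition \ref{prop:I2}.

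I expect the main obstacle to be the careful handling of the irrationals and the countability of the digit alphabet: the coding map $\pi$ is only defined off the rationals, and one must verify that discarding $\mathbb{Q}$ (a countable, hence dimension-zero, set) and the $\bar{\mu}$-null exceptional set does not change the Hausdorff dimension of the level set. A secondary subtlety is ensuring the ball-versus-cylinder comparison is uniform: because the alphabet is countable, the cylinder images $\pi([x_{1},\dots,x_{n}])$ shrink at highly non-uniform rates, so the argument that a ball of radius $r$ meets only boundedly many comparable cylinders must lean on the metric potential bound and the structure of the Gauss map's inverse branches rather than on any uniform geometry.
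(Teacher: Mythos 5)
Your reduction of the problem to comparing $X_{\alpha}$ with $\pi(X_{\alpha}^{s})$ via Proposition \ref{prop:I2} is a sensible starting point, but the step you use to make that comparison is invalid: you discard the exceptional set $\pi(\Sigma\setminus\bar{X})$ on the grounds that it is $\bar{\mu}$-null and conclude that $X_{\alpha}$ and $\pi(X_{\alpha}^{s})$ have equal Hausdorff dimension. In multifractal analysis, null sets are not dimension-negligible: for every $\alpha$ other than the $\mu$-typical exponent, the level set $X_{\alpha}$ is itself $\bar{\mu}$-null, and the entire content of the proposition is that such null sets have positive dimension $f_{\mu}(\alpha)$. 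Concretely, since $\mu(X_{\alpha}^{s})=0$ for non-typical $\alpha$, knowing $\mu(\bar{X})=1$ gives no control whatsoever over $X_{\alpha}^{s}\cap\bar{X}$: a priori it could be empty, or of strictly smaller dimension than $X_{\alpha}^{s}$; symmetrically, $X_{\alpha}\setminus\pi(\bar{X})$ is $\bar{\mu}$-null yet could carry all of the dimension of $X_{\alpha}$. So neither inequality $\dim_{H}(X_{\alpha})\ge f_{\mu}(\alpha)$ nor $\dim_{H}(X_{\alpha})\le f_{\mu}(\alpha)$ follows from your argument. For an almost-everywhere statement to say anything about a level set, one must work with a measure giving that level set full mass --- the states $\mu_{q}$ with $\mu_{q}(X_{\alpha(q)}^{s})=1$ from Theorem \ref{thm:MR1} --- not the fixed reference measure $\mu.$

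There is a second, related gap. Proposition \ref{prop:AAA} lives in the ultrametric space $\Sigma,$ where balls coincide with cylinders, so the identification $\mu(B(x,r))=\mu([x_{1},\dots,x_{m+1}])$ is automatic there. In $[0,1]$ a Euclidean ball $B(z,r)$ is not comparable to a single cylinder: with a countable alphabet it can meet infinitely many neighbouring cylinders whose diameters and measures differ by unbounded factors, so $\bar{\mu}(B(z,r))$ need not be comparable to the measure of the cylinder of diameter roughly $r$ containing $z.$ This is exactly the difficulty the paper's proof is organised around, and you flag it only as a ``secondary subtlety'' without supplying any mechanism to resolve it; it is in fact the core of the proof. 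The paper's argument is structured accordingly: for the lower bound it approximates by finite subshifts $\Sigma_{n}$ (where the geometry is uniform), uses the temperature functions $T_{n}(q)$ and equilibrium states $\mu_{q_{n}},$ a hitting-time condition ensuring $X_{\alpha}^{s}\subset\pi^{-1}(X_{\alpha}),$ the Pesin--Weiss inequality, and a monotone convergence argument for the pressure; for the upper bound it builds Hausdorff covers from slightly enlarged cylinders, exploiting that neighbouring $m$-cylinders of the Gauss map have nearly equal diameters. Your proposal would need to be rebuilt along these lines, or with equivalent pointwise control of balls versus cylinders on the level sets themselves.
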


We outline the proof of Proposition \ref{thm:G1} as follows.  To prove $\dim_{H}(X_{\alpha}) \ge f_{\mu}(\alpha),$ we first notice that the geometric structure of the Gauss map makes it difficult to cover cylinder sets with neighbourhoods and vice versa.  Define $T_{n}(q)$ as the temperature function in the setting of $\Sigma_{n}.$  Take $\mu_{q_{n}}$ as the equilibrium state for $q\phi-T_{n}(q)\psi.$  We provide a condition involving the hitting times of a $\mu_{q_{n}}-$typical cylinder set.  This gives us that $X_{\alpha}^{s} \subset \pi^{-1}(X_{\alpha}).$  Then, we use an inequality proven by Pesin and Weiss \cite{pesin1997multifractal} involving the pointwise dimension of $x \in \Sigma_{n}$ and the multifractal spectrum $f_{\mu,n}(\alpha).$  To apply these results to $\Sigma,$ we approximate pressure in $\Sigma$ with pressure in $\Sigma_{n}.$  This gives us a monotone convergence argument for $T_{n}(q)$ and it induces a similar argument involving Pesin and Weiss's inequality.  Hence, the result follows.\par
To prove $\dim_{H}(X_{\alpha}) \le f_{\mu}(\alpha),$ we again use the behaviour of the Gauss map.  Neighbouring $m-$cylinders nearly have equal diameters for large $m \in \mathbb{N}.$  We exploit this fact by slightly increasing the size of cylinders around each $x \in X_{\alpha}^{s}.$  This creates a Hausdorff cover for $\pi^{-1}(X_{\alpha}),$ which we use in an argument for bounding the Hausdorff measure of $X_{\alpha}.$  This gives the appropriate upper bound for $\dim_{H}(X_{\alpha}).$

Through Proposition \ref{thm:G1}, we can apply the results of Theorems \ref{thm:M1} and \ref{thm:M2} to the potential $\psi=\log|G'|.$  Now, we consider examples in the next section. 

\section{Examples of Phase Transitions for the Gauss Map}

We consider the geometric potential $\psi=\log|G'|$ and provide examples in which the multifractal spectrum has up to infinitely many phase transitions.
Also, we apply the results of Theorems \ref{thm:M1} and \ref{thm:M2} as well as provide an example when $\alpha_{\lim}$ does not exist.  Furthermore, we remark that by estimating $\psi(x)$ for $x=(x_{1},x_{2},...)$ with a locally H\"older potential $2\log(x_{1}),$ we find that $\psi$ is unbounded.  Now, we consider the first of our three cases: $\alpha_{\lim}<\infty.$

\subsection{The Case $\alpha_{\lim}<\infty$}\hspace*{\fill} \par
As reflected by Theorem \ref{thm:M1}, we provide examples of potentials such that $f_{\mu}(\alpha)$ has zero to three phase transitions.  The easiest way to show this is by approximating $\phi$ and $\psi$ with locally constant potentials.   

\subsubsection{Approximation with Locally Constant Potentials}\hspace*{\fill} \par
Since $\phi$ and $\psi$ are locally H\"older, we will give some of our estimates in terms of one periodic sequences.  Now, we provide a critical technique used in our examples.  Locally H\"older functions can be approximated using locally constant functions as follows.  Let $\Sigma=\mathbb{N}^{\mathbb{N}},$ take $\psi(x)=\log|G'(\pi(x))|$ for each $x \in \Sigma,$ and assume that $\tilde{\psi}(x)$ is a locally constant function.

Then, consider $\psi-\bar{\psi}.$  This potential is locally H\"older because it is a difference of locally H\"older functions.  For any arbitrary $n-$cylinder $[x_{1},...,x_{n}]$ in $\Sigma$ and $x \in [x_{1},...,x_{n}],$
\[0 \le |\psi(x)-\bar{\psi}(x)| \le V_{n}(\psi-\tilde{\psi}) \le C\theta^{n}\]
for some $C>0$ and $\theta \in (0,1).$
Let $\varepsilon>0.$  Since $C>0$ is fixed, we have that for $n \ge N$ such that $N \in \mathbb{N}$ is large,
\[0 \le |\psi(x)-\bar{\psi}(x)| \le V_{n}(\psi-\bar{\psi}) \le C\theta^{N} \le \varepsilon.\]
It follows that we can approximate $\psi$ with a locally H\"older and locally constant potential $\bar{\psi}(x)=-\log(\frac{6}{\pi^{2}x_{1}^{2}}).$

Furthermore, we provide a formula used to calculate the topological pressure of $q\bar{\phi}-t\bar{\psi}.$  Let $0<p_{i}<1$ and $0<s_{i}<1$ for each $i \in \mathbb{N}.$  We will usually take $\bar{\phi}(x)=\log{p_{x_{1}}}$ and $\bar{\psi}(x)=\log{s_{x_{1}}^{-1}}.$  For most of our examples, $s_{i}=\frac{6}{\pi^{2}i^{2}}.$

Then, for $\varepsilon>0,$
\[\log\left(\sum_{i=1}^{\infty}p_{i}^{q}s_{i}^{t}\right) - \varepsilon \le \mathcal{P}(q\bar{\phi}-t\bar{\psi}) \le \log\left(\sum_{i=1}^{\infty}p_{i}^{q}s_{i}^{t}\right) + \varepsilon.\]

We now proceed with our examples.

\subsubsection{Example of Zero Phase Transitions}\hspace*{\fill} \par
For each $i \in \mathbb{N},$ let $0<p_{i}<1.$  Take $p_{i}=\frac{C}{(i+1)^{3}}$ with $C \approx 4.9491$ chosen so that $\sum_{i=1}^{\infty}p_{i}=1.$
For each $x=(x_{1},...) \in \Sigma,$ define the potentials $\phi:\Sigma\rightarrow\mathbb{R}^{-}$ and $\psi:\Sigma\rightarrow\mathbb{R}^{+}$ as follows:
\[
\phi(x)=\log{p_{x_{1}}}=\log\left(\frac{4.9491}{(x_{1}+1)^{3}}\right)
\]
and
\[
\psi(x)=\log|G'(\pi(x))|.
\]
Throughout this example, we will estimate $\psi(x)$ with $\tilde{\psi}(x)=-\log(\frac{6}{\pi^{2}(x_{1})^{2}})$ and take $\tilde{\phi}=\phi.$

We now consider the potential $q\tilde{\phi}-t\tilde{\psi}.$
As noticed earlier, the number of phase transitions is determined by the relationship between $T(q)$ and $\tilde{t}(q).$  We found that $\tilde{t}(q)=-\alpha_{\lim}q+t_{\infty}.$
The calculations for $\alpha_{\lim}$ are as follows:
Let $\bar{j}=(j,j,...)$ such that $j \in \mathbb{N}.$  Then,
\begin{equation}\label{eq:K}
\alpha_{\lim}=\lim\limits_{j\rightarrow\infty}\frac{\tilde{\phi}(\bar{j})}{-\tilde{\psi}(\bar{j})}=\lim\limits_{j\rightarrow\infty}\frac{-3\log{j}}{-2\log{j}}=\frac{3}{2}.
\end{equation}
The value $t_{\infty}$ can be found using Lemma~\ref{prop:L1}. \par
Remember that 
\[
t_{\infty}=\inf\{t\in\mathbb{R}:\mathcal{P}(-t\tilde{\psi})<\infty\}= \inf\{t\in\mathbb{R}:Z_{1}(-t\tilde{\psi})<\infty\}.
\]
Let $j \in \mathbb{N}.$  We let $V_{1,j}(\tilde{\psi})$ be the H\"older constant for $\tilde{\psi}$ such that for every $i,x \in [j],$
\[|\tilde{\psi}(i)-\tilde{\psi}(x)| \le V_{1,j}(\tilde{\psi}).\]
Thus, we must find the infimum of all $t\in\mathbb{R}$ such that
\begin{eqnarray}\label{eq:L}
Z_{1}(-t\tilde{\psi}) &=& \sum_{x_{1}=1}^{\infty}\exp\sup\limits_{x\in[x_{1}]}\left(-2t\log{x_{1}}-t\log\left(\frac{6}{\pi^{2}}\right)\right) \nonumber \\
&\le&
\sum_{j=1}^{\infty}\exp\left(\log(j^{-2t})-t\log\left(\frac{6}{\pi^{2}}\right)+tV_{1,j}(\tilde{\psi})\right)<\infty 
\end{eqnarray}
for $t>\frac{1}{2}$ because $V_{1,j}(\tilde{\psi}) \rightarrow 0$ as $j \rightarrow \infty.$  Hence, $t_{\infty}=\frac{1}{2}.$  Thus, by Equations (\ref{eq:K}) and (\ref{eq:L}),
\[
\tilde{t}(q)=-\frac{3}{2}q+\frac{1}{2}.
\]

By definition, we must have that $Z_{n}(q\tilde{\phi}-t\tilde{\psi})<1$ in order for $\mathcal{P}(q\tilde{\phi}-t\tilde{\psi})<0.$  As noticed earlier, we get $Q$ by considering the values of $q\in\mathbb{R}$ satisfy $T(q)=\tilde{t}(q).$  Hence, we must find $q\in\mathbb{R}$ such that $Z_{n}(q\tilde{\phi}-\tilde{t}(q)\tilde{\psi})<1.$  It is enough to consider $q\in\mathbb{R}$ such that $Z_{1}(q\tilde{\phi}-\tilde{t}(q)\tilde{\psi})<1.$ \par
Let $j \in \mathbb{N}.$  We let $V_{1,j}(q\tilde{\phi}-\tilde{t}(q)\tilde{\psi})$ be the H\"older constant for $q\tilde{\phi}-\tilde{t}(q)\tilde{\psi}$ such that for every $i,x \in [j],$
\[|(q\tilde{\phi}-\tilde{t}(q)\tilde{\psi})(i)-(q\tilde{\phi}-\tilde{t}(q)\tilde{\psi})(x)| \le V_{1,j}(q\tilde{\phi}-\tilde{t}(q)\tilde{\psi}).\]
Furthermore, for each $x \in [j]$ and $\bar{j}=(j,j,j....),$
\[|\sup\limits_{x\in[x_{1}]}q\tilde{\phi}-\tilde{t}(q)\tilde{\psi})(x)-(q\tilde{\phi}-\tilde{t}(q)\tilde{\psi})(\bar{j})|=C_{1,j}(q\tilde{\phi}-\tilde{t}(q)\tilde{\psi})\]
such that $C_{1,j}(q\tilde{\phi}-\tilde{t}(q)\tilde{\psi})>0.$

Hence,
\begin{eqnarray}
Z_{1}(q\tilde{\phi}-\tilde{t}(q)\tilde{\psi}) &=& \sum_{x_{1}=1}^{\infty}\exp\sup\limits_{x\in[x_{1}]}(q\tilde{\phi}-\tilde{t}(q)\tilde{\psi})(x) \nonumber \\
&=&
\sum_{j=1}^{\infty}\exp((q\tilde{\phi}-\tilde{t}(q)\tilde{\psi})(\bar{j})+C_{1,j}(q\tilde{\phi}-\tilde{t}(q)\tilde{\psi})) \nonumber \\
&=&
\sum_{j=1}^{\infty}\frac{(4.9491)^{q}(\frac{6}{\pi^{2}})^{-\frac{3}{2}q+\frac{1}{2}}}{j^{3q+2\tilde{t}(q)}}\exp(C_{1,j}(q\tilde{\phi}-\tilde{t}(q)\tilde{\psi}))=\infty. \nonumber
\end{eqnarray}
Hence, $T(q)\not=\tilde{t}(q)$ for any $q\in\mathbb{R}.$  Thus, $Q=\emptyset.$
Therefore, by Proposition 6.3, the multifractal spectrum has no phase transitions.

\subsubsection{One Phase Transition}\hspace*{\fill} \par
We first provide pictures of the multifractal spectrum and $T(q)$ for the following example.  

\begin{figure}[H]
\centering
\begin{subfigure}{.5\textwidth}
  \centering
  \includegraphics[width=1\linewidth]{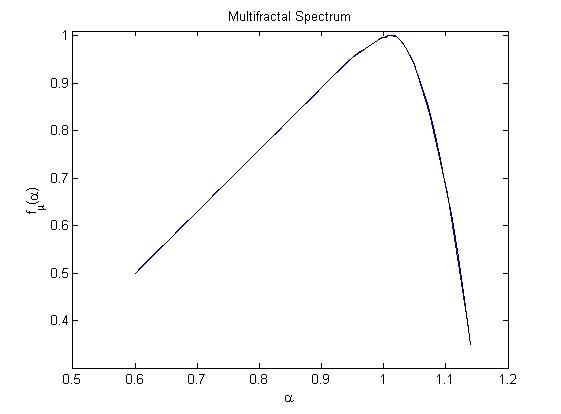}
  \caption{\text{Multifractal Spectrum on} $(0.6,1.13653)$}
  \label{fig:sub1}
\end{subfigure}%
\begin{subfigure}{.5\textwidth}
  \centering
  \includegraphics[width=1\linewidth]{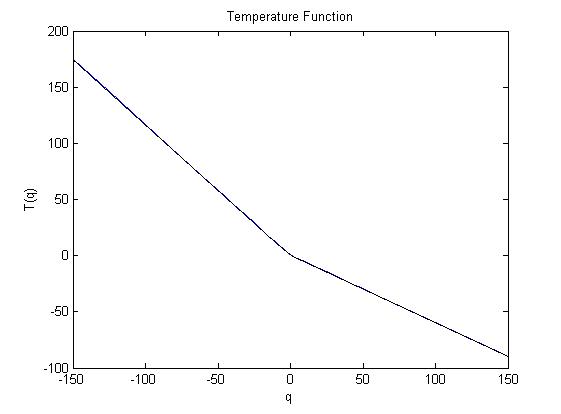}
  \caption{Temperature Function on $(-150,150)$}
  \label{fig:sub2}
\end{subfigure}
\caption{We observe that $\alpha_{\lim}=0.6$ and $T(q)$ has phase transition around $q=1.3.$ Most importantly, the multifractal spectrum has its phase transition around $\alpha \in (0.955,0.999)$ and its maximum is $f_{\mu}(\alpha(0))=1$ (such that $\alpha(0)=1.0068$).}
\label{fig:test}
\end{figure}

For each $i \in \mathbb{N},$ let $0<p_{i}<1.$  Take $p_{i}=\frac{C}{(i)^{\frac{6}{5}}(\log(i+2))^{2}}$ with $C \approx 0.67569$ chosen so that $\sum_{i=1}^{\infty}p_{i}=1.$
For each $x=(x_{1},...) \in \Sigma,$ define the potentials $\phi:\Sigma\rightarrow\mathbb{R}^{-}$ and $\psi:\Sigma\rightarrow\mathbb{R}^{+}$ as follows:
\[
\phi(x)=\log{p_{x_{1}}}=\log\left(\frac{0.67569}{(i)^{\frac{6}{5}}(\log(i+2))^{2}}\right)
\]
and
\[
\psi(x)=\log|G'(\pi(x))|.
\]

Hence, we define the locally constant potentials $\bar{\phi}$ and $\bar{\psi}$ as follows:
\[\bar{\phi}(x):=\log{p_{x_{1}}} \text{ and } \bar{\psi}(x):=-\log\left({\frac{6}{\pi^{2}{x_{1}}^{2}}}\right)\] such that
\[p_{i}=\frac{0.67569}{(i)^{\frac{6}{5}}(\log(i+2))^{2}}\]
for each $i \in \mathbb{N}.$

We will again find an explicit expression for $\tilde{t}(q).$  Let $\bar{j}=(j,j,...)$ for any $j \in \mathbb{N}.$  We have 
\[
\alpha_{\lim}=\lim\limits_{j\rightarrow\infty}\frac{\bar{\phi}(\bar{j})}{-\bar{\psi}(\bar{j})}=\lim\limits_{j\rightarrow\infty}\frac{-\frac{6}{5}\log{j}}{-\log|G'(\pi(\bar{j}))|}=\lim\limits_{j\rightarrow\infty}\frac{-\frac{6}{5}\log{j}}{-2\log{j}}=\frac{3}{5}.
\]
Again, we have that $Z_{1}(-t\bar{\psi})<\infty$ for $t>\frac{1}{2},$ so
$t_{\infty}=\frac{1}{2}.$  Hence, we get that
\[
\tilde{t}(q)=-\frac{3}{5}q+\frac{1}{2}.
\]

We prove that it is possible for the set $Q$ to equal $[q_{0},\infty)$ for some $q_{0}>\frac{6}{5}.$  This involves using partial sums of $\mathcal{P}(q\bar{\phi}-\tilde{t}(q)\bar{\psi})$ to bound $\mathcal{P}(q\phi-\tilde{t}(q)\psi)$ below for some  $q<\frac{6}{5}$ and $Z_{1}(q\bar{\phi}-\tilde{t}(q)\bar{\psi})$  to estimate $\mathcal{P}(q\phi-\tilde{t}(q)\psi)$ above for a fixed $q>\frac{6}{5}.$ First, we need to prove that there exists a $q>\frac{6}{5}$ such that $Z_{1}(q\bar{\phi}-\tilde{t}(q)\bar{\psi})<1.$  Again, we let $V_{1,j}(q\bar{\psi}+-\tilde{t}(q)\bar{\psi})$ be the H\"older constant for $q\bar{\phi}-\tilde{t}(q)\bar{\psi}$ such that for every $i,x \in [j],$
\[|(q\bar{\phi}-\tilde{t}(q)\bar{\psi})(i)-(q\bar{\phi}-\tilde{t}(q)\bar{\psi})(x)| \le V_{1,j}(q\bar{\phi}-\tilde{t}(q)\bar{\psi}).\]
The preceding estimate gives us that
\begin{eqnarray}
Z_{1}(q\bar{\phi}-\tilde{t}(q)\bar{\psi}) &=& \sum_{x_{1}=1}^{\infty}\exp\sup\limits_{x\in[x_{1}]}(q\bar{\phi}-\tilde{t}(q)\bar{\psi})(x) \nonumber \\
& \le & 
\sum_{j=1}^{\infty}\exp(q\bar{\phi}-\tilde{t}(q)\bar{\psi})(\bar{j})+V_{1,j}(q\bar{\phi}-\tilde{t}(q)\bar{\psi})) \nonumber \\
&<& \sum_{j=1}^{\infty}\frac{(0.67569)^{q}(\frac{6}{\pi^{2}})^{-\frac{3}{5}q+\frac{1}{2}}}{(j)(\log(j+2))^{2q}}\exp(V_{1,j}(q\bar{\phi}-\tilde{t}(q)\bar{\psi}))). \nonumber
\end{eqnarray}
\par We notice the following:  Let $\varepsilon>0$ and $N \in \mathbb{N}$ be large.  Then,
\[\sum_{j=1}^{\infty}\frac{(0.67569)^{q}(\frac{6}{\pi^{2}})^{-\frac{3}{5}q+\frac{1}{2}}}{(j)(\log(j+2))^{2q}}=\sum_{j=1}^{N-1}\frac{(0.67569)^{q}(\frac{6}{\pi^{2}})^{-\frac{3}{5}q+\frac{1}{2}}}{(j)(\log(j+2))^{2q}}+\sum_{j=N}^{\infty}\frac{(0.67569)^{q}(\frac{6}{\pi^{2}})^{-\frac{3}{5}q+\frac{1}{2}}}{(j)(\log(j+2))^{2q}}.\]
Let $f(q)=\sum_{j=N}^{\infty}\frac{(0.67569)^{q}(\frac{6}{\pi^{2}})^{-\frac{3}{5}q+\frac{1}{2}}}{(j)(\log(j+2))^{2q}}=\sum_{j=N}^{\infty}\frac{((0.67569)(\frac{6}{\pi^{2}})^{-\frac{3}{5}})^{q}(\frac{6}{\pi^{2}})^{\frac{1}{2}}}{(j)(\log(j+2))^{2q}}.$
We have that $f'(q)=$
\[\sum_{j=N}^{\infty}\frac{((0.67569)(\frac{6}{\pi^{2}})^{-\frac{3}{5}})^{q}(\frac{6}{\pi^{2}})^{\frac{1}{2}}\log((0.67569)(\frac{6}{\pi^{2}})^{-\frac{3}{5}})}{(j)(\log(j+2))^{2q}}-2\sum_{j=N}^{\infty}\frac{((0.67569)(\frac{6}{\pi^{2}})^{-\frac{3}{5}})^{q}(\frac{6}{\pi^{2}})^{\frac{1}{2}}}{j\log\log(j+2)(\log(j+2))^{2q}}\]
which is less than $\varepsilon$ for a fixed $q > \frac{6}{5}.$  Hence, $\sum_{j=N}^{\infty}\frac{(0.67569)^{q}(\frac{6}{\pi^{2}})^{-\frac{3}{5}q+\frac{1}{2}}}{(j)(\log(j+2))^{2q}}$ is a decreasing function with respect to $q.$ \par
Let $g(q)=\sum_{j=1}^{N-1}\frac{(0.67569)^{q}(\frac{6}{\pi^{2}})^{-\frac{3}{5}q+\frac{1}{2}}}{(j)(\log(j+2))^{2q}}.$
Similar to what was shown earlier, $g'(q)<0.$
Hence,
\[\sum_{j=1}^{N-1}\frac{(0.67569)^{q}(\frac{6}{\pi^{2}})^{-\frac{3}{5}q+\frac{1}{2}}}{(j)(\log(j+2))^{2q}} \text{ and } \sum_{j=N}^{\infty}\frac{(0.67569)^{q}(\frac{6}{\pi^{2}})^{-\frac{3}{5}q+\frac{1}{2}}}{(j)(\log(j+2))^{2q}}\]
are decreasing functions with respect to $q.$
Then, for a fixed $q>\frac{6}{5},$
\begin{equation}\label{eq:N}
\sum_{j=1}^{N-1}\frac{(0.67569)^{q}(\frac{6}{\pi^{2}})^{-\frac{3}{5}q+\frac{1}{2}}}{(j)(\log(j+2))^{2q}} \le \int_{1}^{N-1} \frac{(0.67569)^{q}(\frac{6}{\pi^{2}})^{-\frac{3}{5}q+\frac{1}{2}}}{(j)(\log(j+2))^{2q}}\, \mathrm{d}\mu \le 1-\varepsilon
\end{equation}
\begin{equation}\label{eq:P}
\text{and} \sum_{j=N}^{\infty}\frac{(0.67569)^{q}(\frac{6}{\pi^{2}})^{-\frac{3}{5}q+\frac{1}{2}}}{(j)(\log(j+2))^{2q}} \le \int_{N}^{\infty} \frac{(0.67569)^{q}(\frac{6}{\pi^{2}})^{-\frac{3}{5}q+\frac{1}{2}}}{(j)(\log(j+2))^{2q}}\, \mathrm{d}\mu \le \varepsilon.
\end{equation}
Therefore, by Equations (\ref{eq:N}) and (\ref{eq:P}),
\[Z_{1}(q\bar{\phi}-\tilde{t}(q)\bar{\psi})<\sum_{j=1}^{\infty}\frac{(0.67569)^{q}(\frac{6}{\pi^{2}})^{-\frac{3}{5}q+\frac{1}{2}}}{(j)(\log(j+2))^{2q}}\exp(V_{1}(q\bar{\phi}-\tilde{t}(q)\bar{\psi}))) \le 1\]
for a fixed $\widehat{q}>\frac{6}{5}$ (with $\widehat{q}$ not necessarily equal to $q_{0}$).

Since ${\bar{\phi}}$ and ${\bar{\psi}}$ approximate $\phi$ and $\psi$ respectively, we will prove that
there exists a value $1<q<\frac{6}{5}$ such that
\[\mathcal{P}(q\bar{\phi}-\tilde{t}(q)\bar{\psi})>0.\]
Let $\varepsilon>0.$  It follows that
\[\mathcal{P}(q\bar{\phi}-\tilde{t}(q)\bar{\psi})>\sum_{j=1}^{\infty}\frac{(0.67569)^{q}(\frac{6}{\pi^{2}})^{-\frac{3}{5}q+\frac{1}{2}}}{(j)(\log(j+2))^{2q}}-\varepsilon\]
because $\bar{\psi}$ and $\bar{\phi}$ are locally constant.  We will estimate $\mathcal{P}(q\bar{\phi}-\tilde{t}(q)\bar{\psi})$ by using 
\[\sum_{j=1}^{\infty}\frac{(0.67569)^{q}(\frac{6}{\pi^{2}})^{-\frac{3}{5}q+\frac{1}{2}}}{(j)(\log(j+2))^{2q}}.\]

In particular, we get that
\[\sum_{j=1}^{\infty}\frac{(0.67569)^{q}(\frac{6}{\pi^{2}})^{-\frac{3}{5}q+\frac{1}{2}}}{(j)(\log(j+2))^{2q}}>\sum_{j=1}^{25}\frac{(0.67569)^{q}(\frac{6}{\pi^{2}})^{-\frac{3}{5}q+\frac{1}{2}}}{(j)(\log(j+2))^{2q}}>1\]
for $q=1.15.$  Since 
\begin{equation}\label{eq:partialsum}
\sum_{j=1}^{N}\frac{(0.67569)^{q}(\frac{6}{\pi^{2}})^{-\frac{3}{5}q+\frac{1}{2}}}{(j)(\log(j+2))^{2q}}
\end{equation}
is a decreasing function with respect to $q$ and the value of sum (\ref{eq:partialsum}) increases 
as $N \rightarrow \infty,$ there exists a value $1<q<\frac{6}{5}$ such that
\[\mathcal{P}(q{\bar{\phi}}+-\tilde{t}(q){\bar{\psi}})>0.\]
Since ${\bar{\phi}}$ and ${\bar{\psi}}$ approximate $\phi$ and $\psi$ respectively, we get that
there exists a value $1<q<\frac{6}{5}$ such that
\[\mathcal{P}(q\phi-\tilde{t}(q)\psi)>0.\]

This means that $T(q)=\tilde{t}(q)$ for all $\widehat{q}>\frac{6}{5}$ and $T(q)>\tilde{t}(q)$ for some $q<\frac{6}{5}.$  Thus, we have established that $Q=[q_{0},\infty)$ for $q_{0}>\frac{6}{5}.$  Now, we can consider $\alpha(q_{0})$ for $q_{0}>\frac{6}{5}.$  Using our earlier estimates, we have that
\begin{eqnarray}
\int -\phi\,\mathrm{d}\mu_{q} &\le&
\int -\bar{\phi}\,\mathrm{d}\mu_{q}=\sum_{j=1}^{\infty}-\frac{(0.67569)^{q}(\frac{6}{\pi^{2}})^{T(q)}}{j^{\frac{6}{5}q+2T(q)}(\log(j+2))^{2q}}\log\left(\frac{0.67569}{j^{\frac{6}{5}}(\log(j+2))^{2}}\right)\nonumber \\
&<&\sum_{j=1}^{\infty}\frac{1}{j^{\frac{6}{5}q+2T(q)-1}(\log(j+2))^{2q-1}}<\infty \nonumber
\end{eqnarray}
and
\begin{eqnarray}
\int \psi\, \mathrm{d}\mu_{q} &\le&
\int \bar{\psi}\, \mathrm{d}\mu_{q} =-\sum_{j=1}^{\infty}\frac{(0.67569)^{q}(\frac{6}{\pi^{2}})^{T(q)}}{j^{\frac{6}{5}q+2T(q)}(\log(j+2))^{2q}}\log\left(\frac{6}{\pi^{2}j^{2}}\right)  \nonumber \\
&<&\sum_{j=1}^{\infty}\frac{1}{j^{\frac{6}{5}q+2T(q)-1}(\log(j+2))^{2q-1}}<\infty \nonumber 
\end{eqnarray}
when $q>\frac{6}{5}.$  We remark that both integrals are infinite if $q<\frac{5}{6}.$  Hence, if $q_{0}>q>\frac{6}{5},$ 
\[
\alpha(q_{0})=\lim\limits_{q\rightarrow q_{0}^{-}}\frac{\int \phi\, \mathrm{d}\mu_{q}}{\int -\psi\, \mathrm{d}\mu_{q}}>\alpha_{\lim}.
\]

Using the techniques from the proof of Theorem~\ref{thm:M1}, we get the following.  Given that $q_{0}>\frac{6}{5},$ $\alpha(q_{0}) \ge \alpha_{\lim}=\alpha_{\inf}.$  In this case, $f_{\mu}(\alpha)$ is analytic on $(\alpha_{\inf}, \alpha(q_{0}))$ and $(\alpha(q_{0}),\alpha_{\sup}).$  The multifractal spectrum is an increasing linear function and equals $T(q_{0})+q_{0}\alpha$ on $(\alpha_{\inf},\alpha(q_{0})),$ is strictly concave on $(\alpha(q_{0}),\alpha_{\sup}),$ and has its maximum at $\alpha(0).$  The multifractal spectrum has its only phase transition at $\alpha(q_{0}).$ 

\subsubsection{Two and Three Phase Transitions}\hspace*{\fill} \par
For each $j \in \mathbb{N},$ let $0<p_{j}<1.$  Pick a $k\in\mathbb{N}$ and let \[p_{k}=\frac{C_{k}}{(k)^{\frac{6}{5}}(\log(k+2))^{2}}\] such that $C_{k}>1.$  For each $i \not= k \in \mathbb{N},$ take \[p_{i}=\frac{C}{(i)^{\frac{6}{5}}(\log(i+2))^{2}}\] with $0<C<1$ chosen so that $\sum_{i=1}^{\infty}p_{i}=1.$  For simplicity, we take $C=\frac{3}{5}$ because the introduction of $C_{k}$ must mean that $C<0.67569.$  Note that $p_{k}=\frac{C_{k}}{C}\frac{C}{k^{\frac{6}{5}}(\log(k+2))^{2}}.$  For each $x=(x_{1},...) \in \Sigma,$ define the potentials $\phi:\Sigma\rightarrow\mathbb{R}^{-}$ and $\psi:\Sigma\rightarrow\mathbb{R}^{+}$ as follows:
\[
\phi(x)=\log{p_{x_{1}}}=\log\left(\frac{C}{(i)^{\frac{6}{5}}(\log(i+2))^{2}}\right)
\]
if $x_{1}=i\not=k,$
\[
\phi(x)=\log{p_{x_{1}}}=\log\left(\frac{C_{k}}{(k)^{\frac{6}{5}}(\log(k+2))^{2}}\right)
\]
if $x_{1}=k,$ and
\[
\psi(x)=\log|G'(\pi(x))|.
\]
\par Let $\bar{j}=(j,j,...)$ for any $j \in \mathbb{N}.$
Again, we approximate $\phi$ and $\psi$ with locally constant functions $\bar{\phi}$ and $\bar{\psi}.$  
We let $\bar{\phi}(x)=\phi(x)$ and
$\bar{\psi}(x)=-\log\left(\frac{6}{\pi^{2}x_{1}^{-2}}\right)$ for each $x \in \Sigma.$  We again have that $Z_{1}(-t\psi)<\infty$ for $t>\frac{1}{2},$ so 
$\alpha_{\lim}=\frac{3}{5}$ and $t_{\infty}=\frac{1}{2}.$  Hence,
\[
\tilde{t}(q)=-\frac{3}{5}q+\frac{1}{2}.
\]
We will try to prove that it is possible for the set $Q$ to equal $[q_{0},q_{1}]$ for $q_{0}>\frac{5}{6}.$

Since the arguments are nearly identical to the previous example, we instead give an outline.  This involves using $\bar{\phi}$ and $\bar{\psi}$ to estimate $\mathcal{P}(q\phi-\tilde{t}(q)\psi)$ below for $q<\frac{5}{6},$ $Z_{1}(q\bar{\phi}-\tilde{t}(q)\bar{\psi})$ to estimate $\mathcal{P}(q\phi-\tilde{t}(q)\psi)$ above for $q_{0}>q>\frac{5}{6},$ and again $\bar{\phi}$ and $\bar{\psi}$ to estimate $\mathcal{P}(q\phi-\tilde{t}(q)\psi)$ below for $\frac{5}{6}<q<q_{1}<\infty.$  In the case that $Q=[q_{0},q_{1}]$ such that $q_{0}<\frac{5}{6}<q_{1},$ the only modification to the previous argument is the need to use $Z_{1}(q\bar{\phi}-\tilde{t}(q)\bar{\psi})$ to estimate $\mathcal{P}(q\bar{\phi}-\tilde{t}(q)\bar{\psi})$ above for $q<\frac{5}{6}.$  For some $q<\frac{5}{6},$ $Z_{1}(q\bar{\phi}-\tilde{t}(q)\bar{\psi})<1$ so $\mathcal{P}(q\bar{\phi}-\tilde{t}(q)\bar{\psi})<0.$

To prove that $T(q)$ has a second phase transition at $q_{1},$ we consider the following.  Using simple analysis, we find that we need to satisfy
\[\frac{\log{p_{k}}}{-2\log(k)}>\alpha_{\lim}\] in order for $q_{1}$ to exist (because this would mean that there exists $q>q_{1}$ such that $\alpha(q)>\alpha_{\lim}$).
Let $\varepsilon>0.$  Since $\alpha_{\lim}=\lim\limits_{j \rightarrow \infty}\frac{\log{p_{j}}}{\log|G'(\pi(\bar{j}))|}=\lim\limits_{j \rightarrow \infty}\frac{\log{p_{j}}}{2\log(j)},$ it follows that for each $j \ge J$ for some large $J \in \mathbb{N},$
\[\alpha_{\lim}-\varepsilon \le \frac{\log{p_{j}}}{-2\log{j}} \le \alpha_{\lim}+\varepsilon.\]
Thus, for $j \ge k,$ we must satisfy
\[\frac{\log{p_{k}}}{-2\log(k)}> \frac{\log{p_{j}}}{-2\log{j}}.\]
Hence, choosing $C_{k}>1$ gives us the second phase transition for $T(q).$

Now, we must analyse the behaviour of $\alpha(q)$ at $q_{0}$ and $q_{1}.$  The work to show that \[\int -\bar{\phi}\, \mathrm{d}\mu_{q}<\infty \text{ and} \int \bar{\psi}\, \mathrm{d}\mu_{q}<\infty\] for $q>\frac{5}{6}$ is identical to the previous example.  Hence, if $q<q_{0}\le \frac{5}{6},$  \[\int -\bar{\phi}\, \mathrm{d}\mu_{q}=\infty \text{ and} \int \bar{\psi}\, \mathrm{d}\mu_{q}=\infty.\]
The same results are true for $q_{1}.$ \par
Therefore, using the techniques from the proof of Theorem~\ref{thm:M1}, we get either two or three phase transitions for our chosen $\phi$ and $\psi,$ depending on the values of $q_{0}$ and $q_{1}:$
\begin{enumerate}
\item If $q_{0} \le \frac{5}{6} < q_{1},$
$\alpha(q_{0})=\alpha_{\lim}>\alpha(q_{1}).$  The multifractal spectrum is analytic on $(\alpha_{\inf},\alpha(q_{1})),$ $(\alpha(q_{1}),\alpha(q_{0})),$ and $(\alpha(q_{0}),\alpha_{\sup}).$  Furthermore, $f_{\mu}(\alpha)$ is strictly concave on $(\alpha_{\inf},\alpha(q_{1}))$ as well as $(\alpha(q_{0}),\alpha_{\sup}),$ is linear and equals $T(q_{1})+q_{1}\alpha$ on $(\alpha(q_{1}),\alpha(q_{0})),$ and has its maximum at $\alpha(0).$  The multifractal spectrum has phase transitions at $\alpha_{\lim}$ and $\alpha(q_{1})$ in this case.
\item  If $\frac{5}{6}<q_{0}<q_{1},$
$\alpha(q_{0})>\alpha_{\lim}>\alpha(q_{1}).$  The multifractal spectrum is analytic on $(\alpha_{\inf},\alpha(q_{1})),$ $(\alpha(q_{1}),\alpha_{\lim}),$ $(\alpha_{\lim},\alpha(q_{0})),$ and $(\alpha(q_{0}),\alpha_{\sup}).$  Furthermore, $f_{\mu}(\alpha)$ is strictly concave on $(\alpha_{\inf},\alpha(q_{1}))$ as well as $(\alpha(q_{0}),\alpha_{\sup}),$ is linear and equals $T(q_{1})+q_{1}\alpha$ on $(\alpha(q_{1}),\alpha_{\lim}),$ is linear and equals $T(q_{0})+q_{0}\alpha$ on $(\alpha_{\lim},\alpha(q_{0})),$ and has its maximum at $\alpha(0).$  Then, the multfractal spectrum has phase transitions at $\alpha(q_{0}), \alpha_{\lim}, \text{ and } \alpha(q_{1})$ in this case.
\end{enumerate}
In fact, Proposition~\ref{prop:PCD1} gives these results. 

\subsection{The Case $\alpha_{\lim}=\infty$}\hspace*{\fill} \par
We provide the following example in which $\alpha_{\lim}=\infty.$  Let $\Sigma=\mathbb{N}^{\mathbb{N}}.$  Define $\phi:\Sigma \rightarrow \mathbb{R}^{-}$ and $\psi:\Sigma \rightarrow \mathbb{R}^{+}$ as follows.  For each $i \in \mathbb{N},$ let
$p_{i}=\left(\frac{1}{2}\right)^{i}.$  We remark that $p_{i}$ is exactly the Minkowski $?-$function.  Clearly, $\sum_{i=1}^{\infty}p_{i}=1.$  Again, we approximate $\psi(x)=\log|G'(\pi(x))|$ with $\bar{\psi}(x)=-\log\left(\frac{6}{\pi^{2}x_{1}^{2}}\right)$ and we let
\[
\phi(x)=\bar{\phi}(x)=\log{p_{x_{1}}}=\log\left(\left(\frac{1}{2}\right)^{x_{1}}\right).
\]
for each $x \in \Sigma.$ 

Let $\bar{j}=(j,j,j,...)$ such that $j \in \mathbb{N}.$  Since $\bar{\phi}$ and $\bar{\psi}$ are locally constant, we can approximate $\mathcal{P}(q\phi-t\psi)$ with
\[\mathcal{P}(q\bar{\phi}-t\bar{\psi})=\log{Z_{1}(q\bar{\phi}-t\bar{\psi})}=\log\left(\sum_{j=1}^{\infty}\frac{(\frac{6}{\pi^{2}})^{t}}{2^{jq}(j)^{2t}}\right).\]
We notice that
\begin{enumerate}
\item For each $q<0,$ \[ \log\left(\sum_{j=1}^{\infty}\frac{(\frac{6}{\pi^{2}})^{t}}{2^{jq}(j)^{2t}}\right)=\infty\]
independent of the choice of $t\in\mathbb{R}.$  Then, $T(q)=\tilde{t}(q)=\infty$ for these $q.$
\item If $q=0,$
\[\log\left(\sum_{j=1}^{\infty}\frac{(\frac{6}{\pi^{2}})^{t}}{(j)^{2t}}\right)<\infty\]
for every $t>\frac{1}{2}\in\mathbb{R}.$  Hence, $\tilde{t}(0)=\frac{1}{2}<T(0).$  In fact, $T(0)=1.$
\item For each $q>0,$ \[\log\left(\sum_{j=1}^{\infty}\frac{(\frac{6}{\pi^{2}})^{t}}{2^{jq}(j)^{2t}}\right)<\infty\]
independent of the choice of $t\in\mathbb{R}.$  Then, $-\infty=\tilde{t}(q)<T(q)$ for these $q.$
\end{enumerate}
\par By Lemma~\ref{lemma:MU1}, we get that
$$\tilde{t}(q) =
\left\{
	\begin{array}{ll}
		\infty  & \mbox{if } q < 0 \\
		\frac{1}{2} & \mbox{if } q = 0 \\
        -\infty & \mbox{if } q > 0.
	\end{array}
\right.$$
It follows that 
$$Q=\{q \in \mathbb{R}: T(q)=\tilde{t}(q)\}=(-\infty,0).$$
By convention,
$$\alpha(0)=\lim\limits_{q \rightarrow 0^{+}}\alpha(q).$$
Also, notice that
$\alpha_{\lim}=\lim\limits_{i \rightarrow \infty}\frac{\log{p_{i}}}{\log{s_{i}}}=\infty.$  Then, $\alpha(0)<\alpha_{\lim}=\infty=\alpha_{\sup}$ because $T(0)=1>\tilde{t}(0)$ gives us that \[\lim\limits_{q \rightarrow 0^{+}}\int-\phi\, \mathrm{d}\mu_{q} \le \lim\limits_{q \rightarrow 0^{+}}\sum_{j=1}^{\infty}\frac{(\frac{6}{\pi^{2}})^{T(q)}j\log(2)}{2^{jq}(j)^{2T(q)}} \in (-\infty,\infty)\] \[ \text{and} \lim\limits_{q \rightarrow 0^{+}}\int\psi\, \mathrm{d}\mu_{q}\le \lim\limits_{q \rightarrow 0^{+}}\sum_{j=1}^{\infty}\frac{(\frac{6}{\pi^{2}})^{T(q)}[2\log(j)-\log(\frac{6}{\pi^{2}})]}{2^{jq}(j)^{2T(q)}} \in (-\infty,\infty).\]
Hence, we have the following analysis for the behaviour of the multifractal spectrum.

\begin{proposition}
For this example, the multifractal spectrum is increasing and analytic on $(\alpha_{\inf},\alpha(0)).$  $f_{\mu}(\alpha)=T(0)$ on $(\alpha(0),\infty).$  Furthermore, the multifractal spectrum has a phase transition at $\alpha(0).$
\end{proposition}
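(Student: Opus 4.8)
The plan is to read off the multifractal spectrum from the Legendre-transform description of Theorem~\ref{thm:I1}, using the data already computed for this example: $Q=(-\infty,0)$, $\alpha_{\lim}=\alpha_{\sup}=\infty$, and the strict inequality $T(0)=1>\tilde{t}(0)=\tfrac12$. Because $Q=(-\infty,0)$ is the half-open interval with right endpoint $q_1=0$, we have $Q^{\complement}=[0,\infty)$ and $\alpha^{+}=\alpha(0)=\lim_{q\to 0^{+}}\alpha(q)$, so the range $(\alpha_{\inf},\alpha_{\sup})=(\alpha_{\inf},\infty)$ decomposes as $(\alpha_{\inf},\alpha(0))\cup\{\alpha(0)\}\cup(\alpha(0),\infty)$. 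I would establish the behaviour on each open piece and then analyse the junction at $\alpha(0)$.

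On $(\alpha_{\inf},\alpha(0))$ I would invoke the analyticity results of Section~3. For $q\in(0,\infty)\subset Q^{\complement}$, Proposition~\ref{prop:B1} gives that $T(q)$ is analytic and strictly convex and that $\alpha(q)=-T'(q)$ is analytic and strictly decreasing, so as $q$ ranges over $(0,\infty)$ it sweeps out precisely $(\alpha_{\inf},\alpha(0))$. Proposition~\ref{prop:A1} then shows that $f_{\mu}$ is analytic and strictly concave there, and since $\tfrac{d}{d\alpha}f_{\mu}(\alpha(q))=q$, a strictly concave spectrum reaching its maximum at $\alpha(0)$ (where $q=0$) must be strictly increasing on $(\alpha_{\inf},\alpha(0))$.

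For $(\alpha(0),\infty)=(\alpha^{+},\alpha_{\lim})$ I would evaluate $f_{\mu}(\alpha)=\inf_{q\in\mathbb{R}}\{T(q)+q\alpha\}$ directly. The values $q<0$ are discarded from the infimum, since $T(q)=\tilde{t}(q)=\infty$ there; on $q\ge 0$ the map $q\mapsto T(q)+q\alpha$ has derivative $\alpha-\alpha(q)$, which is positive for every such $\alpha>\alpha(0)\ge\alpha(q)$, so the infimum is attained at the endpoint $q=0$. This yields $f_{\mu}(\alpha)=T(0)+0\cdot\alpha=T(0)$, i.e. the spectrum is constant and equal to $T(0)$; this is exactly Proposition~\ref{prop:A2}(1) read with $q_1=0$.

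The phase transition at $\alpha(0)$ then follows by comparing the two branches. Both take the common value $T(0)$ at $\alpha(0)$ (from $f_{\mu}(\alpha(q))=T(q)+q\alpha(q)$ evaluated at $q=0$), so $f_{\mu}$ is continuous there; but the left branch is strictly concave while the right branch is constant, and a strictly concave analytic function cannot coincide with a constant on an interval, so $f_{\mu}$ fails to be analytic at $\alpha(0)$, giving exactly one phase transition as predicted by Theorem~\ref{thm:M2}. The main obstacle is less the computation than the verification that $\alpha(0)$ is finite and strictly below $\alpha_{\lim}=\infty$: this is where the strict inequality $T(0)>\tilde{t}(0)$ is essential, since it is what the setup used (via the boundedness of $\int-\phi\,\mathrm{d}\mu_{q}$ and $\int\psi\,\mathrm{d}\mu_{q}$ as $q\to 0^{+}$) to ensure that the increasing analytic branch is non-degenerate and that the kink at $\alpha(0)$ occurs in the interior rather than being pushed out to $\alpha_{\sup}$.
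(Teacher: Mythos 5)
Your proposal is correct and follows essentially the same route as the paper's proof: parametrize $(\alpha_{\inf},\alpha(0))$ by $q\in(0,\infty)\subset Q^{\complement}$ to obtain analyticity and monotonicity, use the Legendre transform with the right endpoint $q_{1}=0$ of $Q=(-\infty,0)$ (i.e.\ Proposition \ref{prop:A2}) to get $f_{\mu}\equiv T(0)$ on $(\alpha(0),\alpha_{\lim})$, and conclude the phase transition at the junction. If anything you are more careful than the paper: the paper justifies the increasing behaviour by citing Proposition \ref{prop:ID1}, whose statement as printed assigns ``increasing'' to $\{\alpha(q):q<0\}$ and so does not literally apply here, whereas your direct computation $\frac{\mathrm{d}}{\mathrm{d}\alpha}f_{\mu}(\alpha(q))=q>0$ gives the correct sign, and you also spell out the endpoint infimum computation and the identity-theorem argument for non-analyticity at $\alpha(0)$, both of which the paper leaves implicit.
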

\begin{proof}
Each $\alpha \in (\alpha_{\inf},\alpha(0))$ satisfies $\alpha=\alpha(q)$ for a unique $q>0.$  Since each of these $q$ are in $Q^{\complement},$ $f_{\mu}(\alpha)$ is analytic on $(\alpha_{\inf},\alpha(0)).$  The multifractal spectrum increases on $(\alpha_{\inf},\alpha(0))$ follows from Proposition~\ref{prop:ID1}.  For $\alpha \in (\alpha(0),\alpha_{\lim}),$ $f_{\mu}(\alpha)=T(0).$  Hence, the multifractal spectrum is constant on $(\alpha(0),\alpha_{\lim}).$
\end{proof}

\subsection{The Case When $\alpha_{\lim}$ Does Not Exist}\hspace*{\fill} \par
When $\alpha_{\lim}$ does not exist, the multifractal spectrum has up to infinitely many phase transitions.  We remark that Iommi and Jordan \cite{iommi2013phase} create a similar example in the setting of a suspension flow.  Now, we roughly outline the procedure for creating such an example in our setting. First, we take the locally H\"older potentials $\phi$ and $\psi$ such that
\[\phi(x)=\log{p_{x_{1}}} \text{ and } \psi(x)=\log|G'(\pi(x))|\]
for each $x=(x_{1},x_{2},...)\in\Sigma.$
Then, to define the $p_{i}$ for each $i \in \mathbb{N},$ we partition the natural numbers as follows.  Let $r_{0}=0$ and $r_{1}=1.$  Consider the infinite sequence $\{r_{k}\}_{k=2}^{\infty}$ of primes $\{2,3,5,7,11,...\}.$  We define the sets $\{I_{k}\}_{k \in \mathbb{N}_{0}}$ as follows:
\[I_{0}:=\{m \in \mathbb{N} \text{ such that } m \text{ cannot be written as any prime power of any } n \in \mathbb{N}\}.\]
\[I_{1}:=\{m \in \mathbb{N} \text{ that can be written as the } 2\text{nd power of some } n \in \mathbb{N} \}\]
In general,
\[I_{k}:=\{m \in \mathbb{N} \text{ that can be written as the } r_{k+1}\text{st power of some } n \in \mathbb{N} \}.\]

For each $m \in \mathbb{N},$ we get that
\[p_{m} =\frac{C_{k}}{m^{l_{k}}(\log(m+2))^{M_{k}}}\] 
if $k \in\mathbb{N}_{0}$ and $m \in I_{k}.$  We have increasing sequences of constants $\{C_{k}\}_{k \in \mathbb{N}_{0}}$ and $\{M_{k}\}_{k \in \mathbb{N}_{0}}.$  The terms of both sequences are chosen such that $\sum_{m=1}^{\infty}p_{m}=1.$  For each $k \in \mathbb{N} \cup \{0\},$ we have a recursive relation for the values of the sequence $\{l_{k}\}$ stated in the expression for $p_{m}.$
We remind the reader that locally H\"older potentials can be approximated by locally constant potentials.  For each $a_{1}(x)=m \in \mathbb{N},$ define
\[s_{m}:= \frac{6}{\pi^{2}m^{2}}\] for each $m \in \mathbb{N};$ hence, we can estimate $\psi$ with $\bar{\psi}(x)=\log{s_{x_{1}}^{-1}}.$ \par 

For each $I_{k},$ we have a function $\tilde{t}_{k}(q)$ as follows:
\[\tilde{t}_{k}(q):=\inf\{t \in \mathbb{R}: \sum_{I_{k}}p_{m}^{q}s_{m}^{\tilde{t}_{k}(q)}<\infty\}.\]
We define
\[\tilde{t}(q)=\left\{\sup_{q \in \mathbb{R}}\tilde{t}_{k}(q): k\in \mathbb{N} \cup \{0\}\right\}.\]
By construction, $|\tilde{t}_{k}'|>|\tilde{t}_{k+1}'|$ and each $\tilde{t}_{k}$ is linear.  We get that the phase transitions for $\tilde{t}(q)$ occur at values of $q$ such that $\tilde{t}_{k}(q)=\tilde{t}_{k+1}(q).$  Proceeding with the computation of these $q,$ we find that they occur at each $q \in \mathbb{N}.$  Hence, $\tilde{t}(q)$ has infinitely many phase transitions.

Finally, we prove that $Z_{1}(q\bar{\phi}-\tilde{t}(q)\bar{\psi})<1$
for each $q \in (k,k+1) \cup (a,1)$ for some $a \in \mathbb{R}$ and for each $k \in \mathbb{N}.$  This gives us that $\mathcal{P}(q\bar{\phi}-\tilde{t}(q)\bar{\psi})<1$
for those $q;$ hence, $T(q)=\tilde{t}(q)$ for all $q \ge \bar{q} \ge a$ (for some $1> \bar{q} \ge a$).  Using techniques from the previous example, we get results for a possible phase transition for the multifractal spectrum at $\alpha(\bar{q}).$  Without loss of generality, let us assume that there is no phase transition at $\alpha(\bar{q}).$  Hence, we have the following behaviour.  $f_{\mu}(\alpha)$ is analytic on $(\alpha(0),\alpha_{\sup}),$ $(\alpha(1),\alpha(0)),$ $(\alpha(2),\alpha(1)),$...,$(\alpha(N),\alpha(N-1)),$....  The phase transitions for $f_{\mu}(\alpha)$ are at $\alpha(1),$ $\alpha(2),$...$,\alpha(N),$....  The multifractal spectrum increases and is piecewise linear on $(\alpha_{\inf},\alpha(N)),$..., equals $T(3)+3\alpha$ and increases on $(\alpha(3),\alpha(2)),$ equals $T(2)+2\alpha$ and increases on $(\alpha(2),\alpha(1)),$ equals $\alpha$ and increases on $(\alpha(1),\alpha(0)),$ and finally, decreases on $(\alpha(0),\alpha_{\sup}).$  Finally, we remark that the existence of $\alpha_{\lim}$ is absolutely necessary for Theorems~\ref{thm:M1} and \ref{thm:M2} to be true.


\begin{thebibliography}{}
\bibitem[B]{barreira2008dimension} Luis Barreira.  \textit{Dimension and recurrence in hyperbolic dynamics}.  Springer, 2008. 

\bibitem[CM]{cawley1992multifractal}  Robert Cawley and R Daniel Mauldin.  \textit{Multifractal decompositions of Moran fractals}.  \textit{Advances in Mathematics}, 92(2):196-236, 1992. 

\bibitem[F]{falconer2004fractal} Kenneth Falconer.  \textit{Fractal geometry: mathematical foundations and applications}.  John Wiley \& Sons, 2004. 

\bibitem[FO]{feng2003multifractal}  De-Jun Feng and Eric Olivier.  \textit{Multifractal analysis of weak Gibbs measures and phase transition-application to some Bernoulli convolutions}.  \textit{Ergodic Theory and Dynamical Systems}, 23(6):1751-1784, 2003.

\bibitem[HMU]{hanus2002thermodynamic}  Pawel Hanus, R Daniel Mauldin, and Mariusz Urba\'nski.  \textit{Thermodynamic formalism and multifractal analysis of conformal infinite iterated function systems}.  \textit{Acta Mathematica Hungarica}, 96(1-2):27-98, 2002. 

\bibitem[I]{iommi2005multifractal} Godofredo Iommi.  \textit{Multifractal analysis for countable Markov shifts}.  \textit{Ergodic Theory and Dynamical Systems}, 25(06):1881-1907, 2005. 

\bibitem[IJ1]{iommi2010multifractal} Godofredo Iommi and Thomas Jordan.  \textit{Multifractal analysis of Birkhoff averages for countable Markov maps}. \textit{Ergodic Theory and Dynamical Systems}, 35(8):2559-2586, 2015.

\bibitem[IJ2]{iommi2013phase} Godofredo Iommi and Thomas Jordan.  \textit{Phase transitions for suspension flows}.  \textit{Communications in Mathematical Physics}, 320(2):475-498, 2013. 

\bibitem[IJ3]{iommi2015multifractal} Godofredo Iommi and Thomas Jordan.  \textit{Multifractal analysis for quotients of Birkhoff sums for countable Markov maps}.  \textit{International Mathematics Research Notices}, 2015(2):460-498, 2015. 

\bibitem[IJT]{iommijordantodd}  Godofredo Iommi, Thomas Jordan, and Mike Todd.  \textit{Recurrence and transience for suspension flows}.  \textit{Israel Journal of Mathematics}, 209(2):547–592, 2015.

\bibitem[MU]{mauldin2003graph} R Daniel Mauldin and Mariusz Urba\'nski.  \textit{Graph directed Markov systems: geometry and dynamics of limit sets}, volume 148.  Cambridge University Press, 2003. 

\bibitem[N]{nakaishi2000multifractal} Kentaro Nakaishi.  \textit{Multifractal formalism for some parabolic maps}.  \textit{Ergodic theory and dynamical systems}, 20(3):843-857, 2000

\bibitem[O]{olivier2000structure} Eric Olivier.  \textit{Structure multifractale d'une dynamique non expansive d{\'e}finie sur un ensemble de Cantor}.  \textit{Comptes Rendus de l'Acad{\'e}mie des Sciences-Series I-Mathematics}, 331(8):605-610, 2000.

\bibitem[PW]{pesin1997multifractal} Yakov Pesin and Howard Weiss.  \textit{A multifractal analysis of equilibrium measures for conformal expanding maps and Moran-like geometric constructions}.  \textit{Journal of Statistical Physics}, 86(1-2):233-275, 1997.

\bibitem[PW]{pollicottmultifractal} Mark Pollicott and Howard Weiss.  \textit{Multifractal analysis of Lyapunov exponent for continued fraction and Manneville--Pomeau transformations and applications to Diophantine approximation}.  \textit{Communications in mathematical physics}, 207(1):145-171, 1999.

\bibitem[R]{rand1989singularity}  D A Rand.  \textit{The singularity spectrum $f(\alpha)$ for cookie-cutters}.  \textit{Ergodic Theory and Dynamical Systems}, 9(03):527-541, 1989. 

\bibitem[S1]{sarig1999thermodynamic} Omri M Sarig.  \textit{Thermodynamic formalism for countable Markov shifts}.  \textit{Ergodic Theory and Dynamical Systems}, 19(06):1565-1593, 1999. 

\bibitem[S2]{sarig2003existence} Omri M Sarig.  \textit{Existence of Gibbs measures for countable Markov shifts}.   \textit{Proceedings of the American Mathematical Society}, 131(6):1751-1758, 2003. 

\bibitem[S3]{sarig2015thermodynamic} Omri M Sarig.  \textit{Thermodynamic formalism for countable Markov shifts}.  \textit{Proc. of Symposia in Pure Math}, 89: 81-117, 2015.
\end{thebibliography}
\end{document}